\providecommand{\U}[1]{\protect\rule{.1in}{.1in}}
\tikzset{>=Triangle}
\newtheorem{theorem}{Theorem}[section]
\newtheorem{proposition}[theorem]{Proposition}
\newtheorem{corollary}[theorem]{Corollary}
\newtheorem{remark}[theorem]{Remark}
\newtheorem{lemma}[theorem]{Lemma}
\newtheorem{problem}[theorem]{Problem}
\numberwithin{equation}{section}
\pgfplotsset{compat=1.17}
\begin{document}
\title[Regularity of the coefficients of multilinear forms on sequence spaces]{Regularity of the coefficients of multilinear forms on sequence spaces}
\author[D. M. Pellegrino]{Daniel M. Pellegrino}
\address{Departamento de Matem\'{a}tica \\
Universidade Federal da Para\'{\i}ba \\
58.051-900 - Jo\~{a}o Pessoa, Brazil.}
\email{daniel.pellegrino@academico.ufpb.br}
\author[A. Raposo Jr.]{Anselmo Raposo Jr.}
\address{Departamento de Matem\'{a}tica \\
Universidade Federal do Maranh\~{a}o \\
65085-580 - S\~{a}o Lu\'{\i}s, Brazil.}
\email{anselmo.junior@ufma.br}
\author[D.M. Serrano-Rodr\'{\i}guez]{Diana M. Serrano-Rodr\'{\i}guez}
\address{Departamento de Matem\'{a}ticas \\
Universidad Nacional de Colombia \\
111321 - Bogot\'{a}, Colombia.}
\email{Diserranor@unal.edu.co}
\thanks{D. Pellegrino is supported by CNPq Grant 307327/2017-5 and Grant 2019/0014
Paraiba State Research Foundation (FAPESQ) }
\subjclass[2020]{47A07, 47H60, 46G25}
\keywords{Regularity; Multilinear forms; sequence spaces; Hardy-Littlewood inequalities}

\begin{abstract}
The investigation of regularity/summability properties of the coefficients of
bilinear forms in sequence spaces was initiated by Littlewood in $1930$.
Nowadays, this topic has important connections with other fields of Pure and
Applied Mathematics as Complex Analysis, Quantum Information Theory,
Theoretical Computer Science and Combinatorial Games. In this paper we explore
a regularity technique to obtain optimal parameters for several results in
this framework, extending/generalizing theorems of Osikiewicz and Tonge
($2001$), Albuquerque \textit{et al.} ($2016$), Aron \textit{et al.} ($2017$),
Albuquerque and Rezende ($2018$), Paulino ($2020$), among others.

\end{abstract}
\maketitle
\tableofcontents

\section{Introduction}

The investigation of summability properties of the coefficients of multilinear
forms defined on sequence spaces has its origins in 1930 with Littlewood's
seminal paper \cite{l1930}. Littlewood proved that for every bilinear form
$A\colon\mathbb{C}^{n}\times\mathbb{C}^{n}\rightarrow\mathbb{C}$, we have%
\begin{equation}
\left(
{\textstyle\sum\limits_{i,j=1}^{\infty}}
\left\vert A\left(  e_{i},e_{j}\right)  \right\vert ^{4/3}\right)  ^{3/4}%
\leq\sqrt{2}\sup\left\{  \left\vert A(z^{(1)},z^{(2)})\right\vert
:z^{(1)},z^{(2)}\in\mathbb{D}^{n}\right\}  \text{,} \label{q12}%
\end{equation}
where $\mathbb{D}^{n}$ represents the open unit polydisc in $\mathbb{C}^{n}$.
Moreover, the exponent $4/3$ cannot be improved in the sense that, if we
replace $4/3$ by a smaller exponent, it is not possible to change $\sqrt{2}$
by a constant not depending on $n$. This result is known as Littlewood's $4/3$
inequality and its original motivation was a problem posed by P.J. Daniell
concerning functions of bounded variation. Note that the terms $A\left(
e_{i},e_{j}\right)  $ are precisely the coefficients of the bilinear form $A$.
Nowadays it is well-known that summability properties of the coefficients of
multilinear forms play an important role in Mathematics and related fields.
For instance, in $1931$ Bohnenblust and Hille \cite{BH} extended Littlewood's
inequality to multilinear forms in order to investigate Bohr's absolute
convergence problem, i.e., to determine the maximal width $T$ of the vertical
strip in which a Dirichlet series $%
{\textstyle\sum\nolimits_{n=1}^{\infty}}
a_{n}n^{-s}$ converges uniformly but not absolutely. In $2011$, Defant,
Frerick, Ortega-Cerd\`{a}, Ouna\"{\i}es and Seip \cite{annals2011} revisited
the paper of Bohnenblust and Hille and proved that the constant of the
Bohnenblust--Hille inequality for homogeneous polynomials was
hypercontractive, obtaining important applications in Analytic Number Theory
and Complex Analysis. In 2014, Bayart, Pellegrino and Seoane \cite{BPSS}
showed that the constants of the polynomial Bohnenblust--Hille inequality were
in fact sub-exponential and, as a consequence, concluded that the exact
asymptotic growth of the Bohr radius of the $n$-dimensional polydisk is
$\sqrt{\log n/n}$. Applications of this bulk of results transcend the scope of
Pure Mathematics and can be found in Quantum Information Theory, Theoretical
Computer Science and Combinatorial Games (\cite{entropy,montanaro,
JFAjoguinho}).

The natural extension of Littlewood's $4/3$ inequality to multilinear forms in
$\mathbb{K}^{n}$ ($\mathbb{K}=\mathbb{R}$ or $\mathbb{C}$) spaces, with
$\ell_{p}$ norms, was initiated by Hardy and Littlewood in $1934$ for bilinear
forms (\cite{hardy}) and extended to multilinear forms by Praciano-Pereira
\cite{pra} in $1981$; since then several authors have investigated this
subject (see \cite{b,bayartEUROPEAN,dimant,998800} and the references
therein). In this new context, for an $m$-linear form $A\colon\mathbb{K}%
^{n}\times\cdots\times\mathbb{K}^{n}\rightarrow\mathbb{K}$, the supremum at
the right-hand-side of (\ref{q12}) is replaced by%
\[
\left\Vert A\right\Vert :=\sup\left\{  \left\vert A(z^{(1)},\ldots
,z^{(m)})\right\vert :\left\Vert z^{(j)}\right\Vert _{\ell_{p_{j}}^{n}}%
\leq1\right\}  \text{.}%
\]
Above and henceforth, for the sake of simplicity, $\ell_{p}^{n}$ denotes
$\mathbb{K}^{n}$ with the $\ell_{p}$ norm. Even if we do not explicitly
mention, all inequalities along this paper hold for all positive integers $n$
and the respective constants do not depend on $n$; when we write $C_{m}$ it
means that the constant just depend on $m$ (the degree of $m$-linearity of a
multilinear form and we shall assume that $m\geq2$). As usual, $p^{\ast}$
shall denote the conjugate of $p$, i.e., $1/p+1/p^{\ast}=1$ and we assume that
$1/\infty=0$ and $1/0=\infty$.

There is a vast recent literature related to the Hardy--Littlewood
inequalities (HL for short) for multilinear forms in sequence spaces. The main
lines of research are the search of optimal exponents and optimal constants
(see \cite{ap, b, pt, 998800} and the references therein). Following the lines
of the seminal paper of Hardy and Littlewood, the investigation is usually
divided in two cases:

\begin{itemize}
\item $1/p_{1}+\cdots+1/p_{m}\leq1/2$,

\item $1/2\leq1/p_{1}+\cdots+1/p_{m}<1$.
\end{itemize}

In the first case, Praciano-Pereira \cite{pra} extended the bilinear result of
Hardy and Littlewood by proving that there exists a constant $C_{m}$ such
that
\begin{equation}
\left(  \sum_{j_{1}=1}^{n}\cdots\sum_{j_{m}=1}^{n}\left\vert A(e_{j_{1}%
},\ldots,e_{j_{m}})\right\vert ^{\mu}\right)  ^{\frac{1}{\mu}}\leq
C_{m}\left\Vert A\right\Vert \text{,} \label{i99}%
\end{equation}
for all $m$-linear forms $A\colon\ell_{p_{1}}^{n}\times\cdots\times\ell
_{p_{m}}^{n}\rightarrow\mathbb{K}$, where
\[
\mu:=\frac{2m}{m+1-2\left(  1/p_{1}+\cdots+1/p_{m}\right)  }\text{.}%
\]
This result was recently extended in \cite[Theorem 2.2]{alb2}. In fact, if
$1/p_{1}+\cdots+1/p_{m}\leq1/2$, choosing $\left(  r,q\right)  =\left(
1,2\right)  $ and $Y=\mathbb{K}$ in \cite[Theorem 2.2]{alb2} we have: if
$t_{1},\dots,t_{m}\in\left(  \left[  1-\left(  1/p_{1}+\cdots+1/p_{m}\right)
\right]  ^{-1},2\right)  $, there is a constant $C_{m}$ such that
\begin{equation}
\left(  \sum\limits_{j_{1}=1}^{n}\left(  \cdots\left(  \sum\limits_{j_{m}%
=1}^{n}\left\vert A\left(  e_{j_{1}},\ldots,e_{j_{m}}\right)  \right\vert
^{t_{m}}\right)  ^{\frac{t_{m-1}}{t_{m}}}\cdots\right)  ^{\frac{t_{1}}{t_{2}}%
}\right)  ^{\frac{1}{t_{1}}}\leq C_{m}\left\Vert A\right\Vert \text{,}
\label{aniso}%
\end{equation}
for all $m$-linear forms $A\colon\ell_{p_{1}}^{n}\times\cdots\times\ell
_{p_{m}}^{n}\rightarrow\mathbb{K}$ if, and only if,
\begin{equation}
\frac{1}{t_{1}}+\cdots+\frac{1}{t_{m}}\leq\frac{m+1}{2}-\left(  \dfrac
{1}{p_{1}}+\cdots+\dfrac{1}{p_{m}}\right)  \text{.} \label{211}%
\end{equation}

Notice that, due to the monotonicity of the $\ell_{p}$ norms, the interesting
case in (\ref{211}) is when the equality holds.

The equivalence $(\ref{aniso})\Leftrightarrow(\ref{211})$ shows that, in
general, there is no unique solution to the question: what is the optimal
exponent at the $j$-th position of the HL inequality? In fact, at least on the
aforementioned case, note that the optimal value of $t_{1}$ depends on
$t_{2},\ldots,t_{m}$ and so on. This motivates a more involved notion of
optimality, introduced in \cite[Definition 7.1]{PSRT}: an $m$-tuple of
exponents $\left(  t_{1},\ldots,t_{m}\right)  $ is called \textit{globally
sharp} if a Hardy-Littlewood type inequality holds for these exponents and,
for any $\varepsilon_{j}>0$ and $j=1,\ldots,m$, there is no HL inequality for
the $m$-tuple of exponents $\left(  t_{1},\ldots,t_{j-1},t_{j}-\varepsilon
_{j},t_{j+1},\ldots,t_{m}\right)  $.

From a different viewpoint a globally sharp $m$-tuple of exponents $\left(
t_{1},\ldots,t_{m}\right)  $ is a border point of the set of all admissible
exponents of a certain type of HL inequality.%

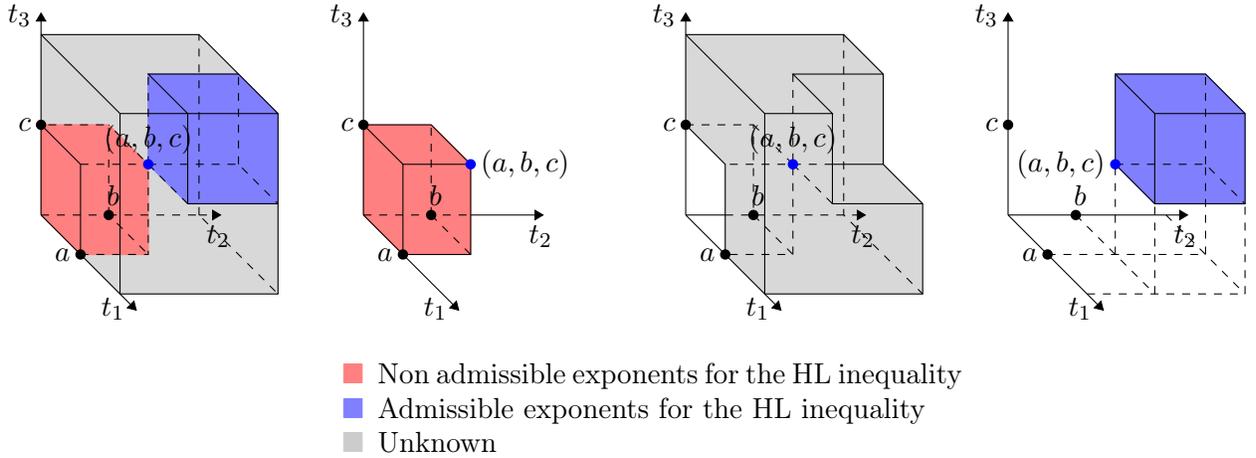
\begin{figure}[H]
\centering\caption
{A globally sharp exponent $(a,b,c)$ for $m=3$ in a certain HL inequality}%
%

\begin{multicols}{4}%
\begin{tikzpicture}[x=0.3cm,y=0.3cm]
\draw
[line width=0.5pt,color=gray!40,fill=gray!40,opacity=0.75] (4.75,2.25)--(8.75,2.25)--(8.75,6.25)--(4.75,6.25)--(4.75,2.25);
\draw
[line width=0.5pt,color=gray!40,fill=gray!40,opacity=0.75] (0,4)--(0,8)--(7,8)--(8.75,6.25)--
(4.75,6.25)--(6.5,4.5)--(6.5,0.5)--(10.5,0.5)--(10.5,-3.5)--(3.5,-3.5)--(1.75,-1.75)--(1.75,2.25)
--(0,4);
\draw
[line width=0.5pt,color=red!50,fill=red!50] (0,0)--(0,4)--(3,4)--(4.75,2.25)--(4.75,-1.75)--
(1.75,-1.75)--(0,0);
\draw
[line width=0.5pt,color=blue!50,fill=blue!50] (4.75,2.25)--(4.75,6.25)--(8.75,6.25)--
(10.5,4.5)--(10.5,0.5)--(6.5,0.5)--(4.75,2.25);
\draw[dashed] (0,0)--(3,0)--(4.75,-1.75);
\draw[dashed] (3,0)--(3,4)--(0,4);
\draw[dashed] (3,0)--(7,0)--(10.5,-3.5);
\draw[dashed] (1.75,-1.75)--(4.75,-1.75)--(4.75,2.25);
\draw[dashed] (4.75,2.25)--(8.75,2.25)--(10.5,0.5);
\draw[dashed] (8.75,2.25)--(8.75,6.25);
\draw[dashed] (7,0)--(7,8);
\draw[dashed] (1.75,2.25)--(4.75,2.25);
\draw[dashed] (6.5,0.5)--(3,4);
\draw[dashed] (4.75,2.25)--(4.75,6.25);
\draw(1.75,-1.75)--(1.75,2.25);
\draw(3.5,-3.5)--(3.5,4.5)--(0,8);
\draw(3.5,4.5)--(10.5,4.5);
\draw(0,4)--(1.75,2.25);
\draw(0,8)--(7,8)--(10.5,4.5)--(10.5,-3.5)--(3.5,-3.5);
\draw(10.5,0.5)--(6.5,0.5)--(6.5,4.5)--(4.75,6.25)--(8.75,6.25);
\draw[->,dashed] (7,0)--(8,0);
\draw[->] (0,0)--(0,9);
\draw[->] (0,0)--(4.25,-4.25);
\draw[line width=0.5pt,color=black,fill=black] (0,4) circle (0.06cm);
\draw[line width=0.5pt,color=black,fill=black] (3,0) circle (0.06cm);
\draw[line width=0.5pt,color=black,fill=black] (1.75,-1.75) circle (0.06cm);
\draw[line width=0.5pt,color=blue,fill=blue] (4.75,2.25) circle (0.06cm);
\draw[color=black, left] (0,8.85) node {$t_{3}$};
\draw[color=black, below] (7.85,0) node {$t_{2}$};
\draw[color=black, left] (4.15,-4.15) node {$t_{1}$};
\draw[color=black, left] (1.75,-1.75) node {$a$};
\draw[color=black, above] (3.2,0) node {$b$};
\draw[color=black, left] (0,4) node {$c$};
\draw[color=black, above] (4.75,2.25) node {$(a,b,c)$};
\end{tikzpicture}%
%

\begin{tikzpicture}[x=0.3cm,y=0.3cm]
\draw[color=white] (0,0)--(10.5,0);
\draw
[line width=0.5pt,color=red!50,fill=red!50] (0,0)--(0,4)--(3,4)--(4.75,2.25)--(4.75,-1.75)--(1.75,-1.75)--(0,0);
\draw
(0,4)--(3,4)--(4.75,2.25)--(4.75,-1.75)--(1.75,-1.75)--(1.75,2.25)--(0,4);
\draw(1.75,2.25)--(4.75,2.25);
\draw[dashed] (3,0)--(4.75,-1.75);
\draw[dashed] (3,0)--(3,4);
\draw[dashed] (0,0)--(4.75,0);
\draw[->] (4.75,0)--(8,0);
\draw[->] (0,0)--(0,9);
\draw[->] (0,0)--(4.25,-4.25);
\draw[line width=0.5pt,color=black,fill=black] (0,4) circle (0.06cm);
\draw[line width=0.5pt,color=black,fill=black] (3,0) circle (0.06cm);
\draw[line width=0.5pt,color=black,fill=black] (1.75,-1.75) circle (0.06cm);
\draw[line width=0.5pt,color=blue,fill=blue] (4.75,2.25) circle (0.06cm);
\draw[color=black, left] (0,8.85) node {$t_{3}$};
\draw[color=black, below] (7.85,0) node {$t_{2}$};
\draw[color=black, left] (4.15,-4.15) node {$t_{1}$};
\draw[color=black, left] (1.75,-1.75) node {$a$};
\draw[color=black, above] (3.2,0) node {$b$};
\draw[color=black, left] (0,4) node {$c$};
\draw[color=black, right] (4.75,2.25) node {$(a,b,c)$};
\end{tikzpicture}%
\begin{tikzpicture}[x=0.3cm,y=0.3cm]
\draw
[line width=0.5pt,color=gray!40,fill=gray!40,opacity=0.75] (0,4)--(0,8)--(7,8)--(8.75,6.25)--
(8.75,2.25)--(10.5,0.5)--(10.5,-3.5)--(3.5,-3.5)--(1.75,-1.75)--(1.75,2.25)--(0,4);
\draw
(0,8)--(7,8)--(8.75,6.25)--(8.75,2.25)--(10.5,0.5)--(10.5,-3.5)--(3.5,-3.5);
\draw(0,4)--(1.75,2.25)--(1.75,-1.75);
\draw(0,8)--(3.5,4.5)--(3.5,-3.5);
\draw(8.75,6.25)--(4.75,6.25)--(6.5,4.5)--(3.5,4.5);
\draw(10.5,0.5)--(6.5,0.5)--(6.5,4.5);
\draw(8.75,2.25)--(6.5,2.25);
\draw(0,0)--(1.75,0);
\draw[dashed] (0,4)--(3,4)--(3,0);
\draw[dashed] (3,4)--(4.75,2.25)--(1.75,2.25);
\draw[dashed] (3,0)--(4.75,-1.75)--(1.75,-1.75);
\draw[dashed] (4.75,-1.75)--(4.75,2.25);
\draw[dashed] (4.75,2.25)--(6.5,2.25);
\draw[dashed] (4.75,6.25)--(4.75,2.25);
\draw[dashed] (7,8)--(7,0)--(10.5,-3.5);
\draw[dashed] (4.75,2.25)--(6.5,0.5);
\draw[->,dashed] (1.75,0)--(8,0);
\draw[->] (0,0)--(0,9);
\draw[->] (0,0)--(4.25,-4.25);
\draw[line width=0.5pt,color=black,fill=black] (0,4) circle (0.06cm);
\draw[line width=0.5pt,color=black,fill=black] (3,0) circle (0.06cm);
\draw[line width=0.5pt,color=black,fill=black] (1.75,-1.75) circle (0.06cm);
\draw[line width=0.5pt,color=blue,fill=blue] (4.75,2.25) circle (0.06cm);
\draw[color=black, left] (0,8.85) node {$t_{3}$};
\draw[color=black, below] (7.85,0) node {$t_{2}$};
\draw[color=black, left] (4.15,-4.15) node {$t_{1}$};
\draw[color=black, left] (1.75,-1.75) node {$a$};
\draw[color=black, above] (3.2,0) node {$b$};
\draw[color=black, left] (0,4) node {$c$};
\draw[color=black, above] (4.75,2.25) node {$(a,b,c)$};
\end{tikzpicture}%
%

\begin{tikzpicture}[x=0.3cm,y=0.3cm]
\draw
[line width=0.5pt,color=blue!50,fill=blue!50] (4.75,2.25)--(4.75,6.25)--(8.75,6.25)--
(10.5,4.5)--(10.5,0.5)--(6.5,0.5)--(4.75,2.25);
\draw
(4.75,2.25)--(4.75,6.25)--(8.75,6.25)--(10.5,4.5)--(10.5,0.5)--(6.5,0.5)--(4.75,2.25);
\draw(4.75,6.25)--(6.5,4.5)--(10.5,4.5);
\draw(6.5,0.5)--(6.5,4.5);
\draw[dashed] (4.75,2.25)--(8.75,2.25)--(10.5,0.5);
\draw[dashed] (8.75,2.25)--(8.75,6.25);
\draw[dashed] (1.75,-1.75)--(8.75,-1.75);
\draw[dashed] (3,0)--(6.5,-3.5);
\draw[dashed] (3.5,-3.5)--(10.5,-3.5);
\draw[dashed] (7,0)--(10.5,-3.5);
\draw[dashed] (4.75,-1.75)--(4.75,2.25);
\draw[dashed] (8.75,-1.75)--(8.75,2.25);
\draw[dashed] (6.5,-3.5)--(6.5,0.5);
\draw[dashed] (10.5,-3.5)--(10.5,0.5);
\draw[->] (0,0)--(8,0);
\draw[->] (0,0)--(0,9);
\draw[->] (0,0)--(4.25,-4.25);
\draw[line width=0.5pt,color=black,fill=black] (0,4) circle (0.06cm);
\draw[line width=0.5pt,color=black,fill=black] (3,0) circle (0.06cm);
\draw[line width=0.5pt,color=black,fill=black] (1.75,-1.75) circle (0.06cm);
\draw[line width=0.5pt,color=blue,fill=blue] (4.75,2.25) circle (0.06cm);
\draw[color=black, left] (0,8.85) node {$t_{3}$};
\draw[color=black, below] (7.85,0) node {$t_{2}$};
\draw[color=black, left] (4.15,-4.15) node {$t_{1}$};
\draw[color=black, left] (1.75,-1.75) node {$a$};
\draw[color=black, above] (3.2,0) node {$b$};
\draw[color=black, left] (0,4) node {$c$};
\draw[color=black, left] (4.75,2.25) node {$(a,b,c)$};
\end{tikzpicture}%
%

\end{multicols}%
%

\begin{minipage}[bl]{9.05cm}
\begin{enumerate}
\item[{\color{red!50}$\blacksquare$}%
] Non admissible exponents for the HL inequality
\item[{\color{blue!50}$\blacksquare$}%
] Admissible exponents for the HL inequality
\item[{\color{gray!40}$\blacksquare$}] Unknown
\end{enumerate}
\end{minipage}%
%

\end{figure}%

Despite the active research in the field, several basic issues remain open and
in the present paper we deal with some of these questions. Our first main
result (Theorem \ref{9876}) provides globally sharp exponents for the case
$1/2\leq1/p_{1}+\cdots+1/p_{m}<1$. We begin by recalling recent results in
this line.

The first one is due to Osikiewicz and Tonge (Theorem \ref{tonge}); in 2016,
Dimant and Sevilla-Peris (Theorem \ref{dddd}) obtained an optimal $m$-linear
version for the isotropic version (choosing the same exponents in all indexes)
and, more recently, anisotropic variants were obtained by Albuquerque and
Rezende (Theorem \ref{nnn}) and \ Aron \textit{et al}. (Theorem \ref{aaa}).

\begin{theorem}
$\left(  \text{See \cite[Theorem 5]{tonge}}\right)  $\label{tonge} If
$p_{1},p_{2}\in(2,\infty]\times(1,2]$ and $1/2\leq1/p_{1}+1/p_{2}<1$, then
\[
\left(  \sum_{j_{1}=1}^{n}\left(  \sum_{j_{2}=1}^{n}\left\vert A(e_{j_{1}%
},e_{j_{2}})\right\vert ^{p_{2}^{\ast}}\right)  ^{\frac{\lambda}{p_{2}^{\ast}%
}}\right)  ^{\frac{1}{\lambda}}\leq\left\Vert A\right\Vert \text{,}%
\]
for all bilinear forms $A\colon\ell_{p_{1}}^{n}\times\ell_{p_{2}}%
^{n}\rightarrow\mathbb{K}$, with $1/\lambda:=1-\left(  1/p_{1}+1/p_{2}\right)
$.
\end{theorem}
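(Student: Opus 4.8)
The plan is to recast the bilinear estimate as a bound on a single linear operator, to linearize the first slot by a Rademacher average, and then to reduce everything to a soft inequality for nonnegative matrices; the only point needing care is keeping every constant equal to $1$. Write $c_{j_1j_2}=A(e_{j_1},e_{j_2})$. Freezing the first variable and using that the dual of $\ell_{p_2}^n$ is $\ell_{p_2^{\ast}}^n$ shows that $\|A\|$ equals the norm of the operator $T\colon\ell_{p_1}^n\to\ell_{p_2^{\ast}}^n$ with $Te_{j_1}=(c_{j_1j_2})_{j_2}$, and that the left-hand side of the statement is $\big(\sum_{j_1}\|Te_{j_1}\|_{p_2^{\ast}}^{\lambda}\big)^{1/\lambda}$. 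So it suffices to prove $\big(\sum_{j_1}\|Te_{j_1}\|_{p_2^{\ast}}^{\lambda}\big)^{1/\lambda}\le\|T\|$ with $1/\lambda=1-1/p_1-1/p_2=1/p_2^{\ast}-1/p_1$; note $\lambda\ge 2$ (since $p_2\le 2$ already forces $1/p_1+1/p_2\ge 1/2$, so that hypothesis is automatic) and $\lambda<\infty$ by hypothesis.

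For signs $\varepsilon\in\{-1,1\}^n$ and $a\in[0,\infty)^n$ one has $\big\|(\varepsilon_{j_1}a_{j_1})_{j_1}\big\|_{p_1}=\|a\|_{p_1}$, hence $\big\|T(\varepsilon_{j_1}a_{j_1})_{j_1}\big\|_{p_2^{\ast}}\le\|T\|\,\|a\|_{p_1}$. Raising to the power $p_2^{\ast}\ge 2$, averaging over $\varepsilon$, and invoking Khinchin's inequality in the cheap direction --- for $q\ge 2$, $\mathbb{E}_{\varepsilon}\big|\sum_i\varepsilon_iz_i\big|^{q}\ge\big(\mathbb{E}_{\varepsilon}\big|\sum_i\varepsilon_iz_i\big|^{2}\big)^{q/2}=\big(\sum_i|z_i|^2\big)^{q/2}$ by Jensen's inequality, with constant exactly $1$ --- one obtains
\[
\sum_{j_2}\Big(\sum_{j_1}a_{j_1}^{2}\,|c_{j_1j_2}|^{2}\Big)^{p_2^{\ast}/2}\le\|T\|^{p_2^{\ast}}\|a\|_{p_1}^{p_2^{\ast}}.
\]
Put $b_{j_1}=a_{j_1}^2$, $s=p_2^{\ast}/2\ge 1$, $P=p_1/2\ge 1$, and let $M$ be the entrywise nonnegative matrix $M_{j_2j_1}=|c_{j_1j_2}|^2$. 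The displayed inequality reads $\|Mb\|_{\ell_s^n}\le\|T\|^2\|b\|_{\ell_P^n}$ for all $b\ge 0$, hence (triangle inequality, using $M\ge 0$) for all $b$, i.e. $\|M\|_{\ell_P^n\to\ell_s^n}\le\|T\|^2=\|A\|^2$. Moreover $P\ge s$, precisely because $1/p_1+1/p_2\le 1$.

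The heart of the argument is a soft inequality valid for \emph{every} entrywise nonnegative $n\times n$ matrix $N$ and every $1\le s\le P$:
\[
\Big(\sum_{j_1}\|Ne_{j_1}\|_{s}^{\Lambda}\Big)^{1/\Lambda}\le\|N\|_{\ell_P^n\to\ell_s^n},\qquad\frac1\Lambda=\frac1s-\frac1P.
\]
To prove it, test $N$ on a nonnegative vector $\gamma$: the coordinates of $N\gamma$ are nonnegative, so the elementary bound $(\sum_i u_i)^s\ge\sum_i u_i^s$ ($u_i\ge 0$, $s\ge 1$) gives $\|N\gamma\|_s^s=\sum_{j_2}\big(\sum_{j_1}N_{j_2j_1}\gamma_{j_1}\big)^s\ge\sum_{j_1}\gamma_{j_1}^s\|Ne_{j_1}\|_s^s$. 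Taking the supremum over $\gamma\ge 0$ with $\|\gamma\|_P\le 1$ and substituting $\delta_{j_1}=\gamma_{j_1}^s$ (so $\|\delta\|_{P/s}=\|\gamma\|_P^s$, with $P/s\ge 1$), Hölder duality turns the right-hand side into $\big\|(\|Ne_{j_1}\|_s^s)_{j_1}\big\|_{(P/s)^{\ast}}$; thus $\|N\|_{\ell_P^n\to\ell_s^n}^s\ge\big(\sum_{j_1}\|Ne_{j_1}\|_s^{\,s(P/s)^{\ast}}\big)^{1/(P/s)^{\ast}}$, which is the claim with $\Lambda=s(P/s)^{\ast}$ and $1/\Lambda=\tfrac1s\big(1-\tfrac sP\big)=\tfrac1s-\tfrac1P$. (For $P=s$ this degenerates to $\sup_{j_1}\|Ne_{j_1}\|_s\le\|N\|$, matching $\lambda=\infty$.)

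Applying this with $N=M$ closes the proof: since $\|Me_{j_1}\|_s^s=\sum_{j_2}|c_{j_1j_2}|^{2s}=\sum_{j_2}|c_{j_1j_2}|^{p_2^{\ast}}$, we have $\|Me_{j_1}\|_s=\big(\sum_{j_2}|c_{j_1j_2}|^{p_2^{\ast}}\big)^{2/p_2^{\ast}}$, and feeding this together with $\|M\|_{\ell_P^n\to\ell_s^n}\le\|A\|^2$ into the displayed matrix inequality with $\lambda=2\Lambda$ --- for which $1/\lambda=\tfrac12(1/s-1/P)=1/p_2^{\ast}-1/p_1=1-1/p_1-1/p_2$, exactly the exponent in the statement --- yields $\big(\sum_{j_1}(\sum_{j_2}|c_{j_1j_2}|^{p_2^{\ast}})^{\lambda/p_2^{\ast}}\big)^{1/\lambda}\le\|A\|$. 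The computations are short; the subtle point --- and what the restrictions $p_1\in(2,\infty]$, $p_2\in(1,2]$ really control --- is the constant, since a careless route (Khinchin in the expensive direction, cotype of $\ell_{p_2^{\ast}}$, or complex interpolation between endpoint cases) would produce a constant strictly larger than $1$. The argument above avoids this because the only appeal to Khinchin is the Jensen half, which has constant $1$ for exponents $\ge 2$ (this is where $p_2\le 2$ enters), the nonnegative-matrix inequality is proved by bare hands, $1/p_1+1/p_2\le 1$ is used exactly to guarantee $s\le P$, and $p_1>2$ makes $\ell_{p_1/2}^n$ a normed space; the bookkeeping of the three nested exponents must be tracked carefully but is routine.
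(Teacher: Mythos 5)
Your proof is correct, and it follows a genuinely different route from the paper. The paper treats Theorem~\ref{tonge} as a cited result and later (Remark~\ref{8z}) recovers it, with the sharp constant $1$, as the $m=2$, $k_{0}=m$ instance of Theorem~\ref{9876}: there the scheme is to start from the isotropic estimate of Dimant and Sevilla--Peris (Theorem~\ref{dddd}), with constant $1$ by \cite[Theorem 3.3]{pascal}, translate it into a coincidence result for multiple $(r;q_{1}^{\ast},\ldots,q_{m}^{\ast})$-summing operators, and then apply the Anisotropic Regularity Principle (whose inclusion operator has norm $1$) to redistribute the exponents. Your argument instead bypasses all of this machinery: you linearize the first slot, use the ``cheap'' (Jensen) half of Khinchin's inequality, exactly where $p_{2}\leq 2$ makes $p_{2}^{\ast}\geq 2$ and the constant is $1$, and then prove by bare hands a clean $\ell_{P}\to\ell_{s}$ estimate for nonnegative matrices; the requirements $p_{1}>2$ and $1/p_{1}+1/p_{2}\leq 1$ enter exactly to make $\ell_{p_{1}/2}^{n}$ a normed space and to give $s\leq P$. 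The paper's route is more flexible (it yields the full anisotropic, multilinear Theorem~\ref{9876} in one stroke and extends to vector-valued targets), while yours is self-contained, elementary, and transparent about why the constant is exactly $1$ in this bilinear case, isolating the two cheap steps that avoid the $>1$ constants hidden in cotype or in the expensive direction of Khinchin. Both are legitimate; for a reader wanting to understand the bilinear theorem in isolation, yours is arguably the more illuminating one, and it is closer in spirit to the original Hardy--Littlewood Rademacher-averaging arguments than to the interpolation proof of Osikiewicz and Tonge.
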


\begin{theorem}
$($See \cite[Proposition 4.1]{dimant}$)$\label{dddd} If $p_{1},\ldots,p_{m}%
\in\lbrack1,\infty]$ and $1/2\leq1/p_{1}+\cdots+1/p_{m}<1,$ then there is a
constant $C_{m}$ such that%
\[
\left(  \sum_{j_{1}=1}^{n}\cdots\sum_{j_{m}=1}^{n}\left\vert A\left(
e_{j_{1}},\ldots,e_{j_{m}}\right)  \right\vert ^{\lambda}\right)  ^{\frac
{1}{\lambda}}\leq C_{m}\left\Vert A\right\Vert \text{,}%
\]
for all $m$-linear forms $A\colon\ell_{p_{1}}^{n}\times\cdots\times\ell
_{p_{m}}^{n}\rightarrow\mathbb{K}$, with $1/\lambda:=1-\left(  1/p_{1}%
+\cdots+1/p_{m}\right)  $. Moreover, $\lambda$ is optimal.
\end{theorem}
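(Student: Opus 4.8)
The plan is to treat the inequality and the optimality of the exponent separately. Fix $n$ (the constant produced will not depend on it) and, after complexifying if $\mathbb{K}=\mathbb{R}$, assume $\mathbb{K}=\mathbb{C}$; write $\mathbf{j}=(j_{1},\ldots,j_{m})$, $e_{\mathbf{j}}=(e_{j_{1}},\ldots,e_{j_{m}})$, and $\Theta:=1/p_{1}+\cdots+1/p_{m}\in[1/2,1)$, so that $1/\lambda=1-\Theta$. When $\Theta=1/2$ there is nothing to prove: the Praciano--Pereira inequality \eqref{i99}, applied to this very tuple $(p_{1},\ldots,p_{m})$, gives the estimate with exponent $\mu=\frac{2m}{m+1-2\Theta}=2=\lambda$. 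So I would concentrate on $\Theta\in(1/2,1)$ and obtain the result by complex interpolation.

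The idea is to realize $(p_{1},\ldots,p_{m})$ as an interpolated tuple, with parameter $\theta:=2\Theta-1\in(0,1)$, between a tuple $(q_{1},\ldots,q_{m})$ with $\sum_{j}1/q_{j}=1/2$ (for which \eqref{i99} yields the sharp exponent $2$) and a tuple $(r_{1},\ldots,r_{m})$ for which the estimate is trivial. Writing $x_{j}=1/q_{j}$, $y_{j}=1/r_{j}$, I need $(1-\theta)x_{j}+\theta y_{j}=1/p_{j}$ (so that $[\ell_{q_{j}}^{n},\ell_{r_{j}}^{n}]_{\theta}=\ell_{p_{j}}^{n}$), $x_{j},y_{j}\in[0,1]$, and $\sum_{j}x_{j}=1/2$. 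The conditions $y_{j}\in[0,1]$ amount to $x_{j}\in[L_{j},U_{j}]$ with $L_{j}=\max\{0,(1/p_{j}-(2\Theta-1))/(2(1-\Theta))\}$ and $U_{j}=\min\{1,(1/p_{j})/(2(1-\Theta))\}$; one checks $L_{j}\le U_{j}$ and — crucially, using only $\Theta\ge1/2$ — that $\sum_{j}L_{j}\le1/2\le\sum_{j}U_{j}$ (the lower bound because $\sum_{j\in S}1/p_{j}\le\Theta$ for $S=\{j:1/p_{j}>2\Theta-1\}$ while $2\Theta-1\ge0$; the upper bound because either some $1/p_{j}\ge2(1-\Theta)$, forcing $U_{j}=1$, or all $U_{j}=(1/p_{j})/(2(1-\Theta))$ and $\sum_{j}U_{j}=\Theta/(2(1-\Theta))\ge1/2$). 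Hence an admissible $(x_{j})$ exists, determining $(q_{j})$ and $(r_{j})$. Now \eqref{i99} for $(q_{1},\ldots,q_{m})$ gives $\bigl(\sum_{\mathbf{j}}|B(e_{\mathbf{j}})|^{2}\bigr)^{1/2}\le C_{m}\|B\|$ for every $m$-linear $B\colon\ell_{q_{1}}^{n}\times\cdots\times\ell_{q_{m}}^{n}\to\mathbb{C}$, while $\sup_{\mathbf{j}}|B(e_{\mathbf{j}})|\le\|B\|$ for every $m$-linear $B\colon\ell_{r_{1}}^{n}\times\cdots\times\ell_{r_{m}}^{n}\to\mathbb{C}$ (since $\|e_{j}\|_{\ell_{r}^{n}}=1$). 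Interpolating the identity map $B\mapsto(B(e_{\mathbf{j}}))_{\mathbf{j}}$ between these two estimates — using that $m$-linear forms on $\ell_{p}^{n}$-spaces interpolate in the expected way and that $[\ell_{2},\ell_{\infty}]_{\theta}=\ell_{\mu_{\theta}}$ with $1/\mu_{\theta}=(1-\theta)/2$ — yields $\bigl(\sum_{\mathbf{j}}|A(e_{\mathbf{j}})|^{\mu_{\theta}}\bigr)^{1/\mu_{\theta}}\le C_{m}\|A\|$, the constant depending only on $m$; and $1/\mu_{\theta}=(1-\theta)/2=1-\Theta=1/\lambda$, so this is exactly the claimed inequality. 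The step I expect to require the most care is the justification of this "interpolation of multilinear forms", i.e. that the complex interpolation space of the spaces of $m$-linear forms on $\ell_{q_{1}}^{n}\times\cdots\times\ell_{q_{m}}^{n}$ and on $\ell_{r_{1}}^{n}\times\cdots\times\ell_{r_{m}}^{n}$ is, up to an $m$-dependent factor, the space of $m$-linear forms on $\ell_{p_{1}}^{n}\times\cdots\times\ell_{p_{m}}^{n}$; in finite dimensions this is standard (interpolate one variable at a time, using $[\ell_{q}^{n},\ell_{r}^{n}]_{\theta}=\ell_{p}^{n}$), but it must be invoked precisely.

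For the optimality of $\lambda$ I would test against the diagonal form $A_{n}(z^{(1)},\ldots,z^{(m)})=\sum_{j=1}^{n}z^{(1)}_{j}\cdots z^{(m)}_{j}$. By the generalized Hölder inequality with exponents $p_{1},\ldots,p_{m},\lambda$ (whose reciprocals sum to $1$), $\|A_{n}\|\le n^{1/\lambda}$, with equality at $z^{(k)}=n^{-1/p_{k}}(1,\ldots,1)$, so $\|A_{n}\|=n^{1/\lambda}$. On the other hand $A_{n}(e_{j_{1}},\ldots,e_{j_{m}})$ equals $1$ if $j_{1}=\cdots=j_{m}$ and $0$ otherwise, hence $\bigl(\sum_{\mathbf{j}}|A_{n}(e_{\mathbf{j}})|^{s}\bigr)^{1/s}=n^{1/s}$ for every $s>0$. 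If an inequality of the stated form held with some exponent $s<\lambda$ and an $n$-independent constant, letting $n\to\infty$ in $n^{1/s}\le C_{m}n^{1/\lambda}$ would be impossible; therefore no exponent below $\lambda$ is admissible, which is the asserted optimality.
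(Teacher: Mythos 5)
The paper does not actually prove this statement; it is quoted verbatim from Dimant and Sevilla-Peris \cite[Proposition 4.1]{dimant} and used as a black box (in particular as the starting estimate in the proof of Theorem \ref{9876} when $k_0=m$). So there is no in-paper proof for a direct comparison; what follows is an assessment of your argument on its own terms.

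Your optimality argument is correct and clean: with the diagonal form $A_n$, the generalized H\"older inequality with exponents $p_1,\ldots,p_m,\lambda$ (whose reciprocals sum to $1$) gives $\|A_n\|=n^{1/\lambda}$, while $\bigl(\sum_{\mathbf{j}}|A_n(e_{\mathbf{j}})|^{s}\bigr)^{1/s}=n^{1/s}$, so any $s<\lambda$ is ruled out by letting $n\to\infty$. The parameter bookkeeping in the existence part is also fine: the verification that $\sum_j L_j\le 1/2\le\sum_j U_j$ using only $\Theta\ge 1/2$ is correct, so an admissible intermediate tuple $(q_j)$ with $\sum 1/q_j=1/2$ and complementary tuple $(r_j)$ does exist.

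The genuine gap is in the step you flagged yourself and then dismissed as ``standard''. You interpolate the \emph{linear} map $B\mapsto(B(e_{\mathbf{j}}))_{\mathbf{j}}$, and for this you need the hard inclusion
\[
\mathcal{L}\bigl(\ell_{p_1}^n,\ldots,\ell_{p_m}^n;\mathbb{K}\bigr)\hookrightarrow\bigl[\mathcal{L}(\ell_{q_1}^n,\ldots,\ell_{q_m}^n;\mathbb{K}),\mathcal{L}(\ell_{r_1}^n,\ldots,\ell_{r_m}^n;\mathbb{K})\bigr]_\theta
\]
with an $n$-independent norm. This is \emph{not} the easy Calder\'on direction (which goes the other way), and for $m\ge 2$ it is not a consequence of ``interpolate one variable at a time'': the space of $m$-linear forms on $\ell_{p_1}^n\times\cdots\times\ell_{p_m}^n$ is the injective tensor product $\ell_{p_1^\ast}^n\hat{\otimes}_\varepsilon\cdots\hat{\otimes}_\varepsilon\ell_{p_m^\ast}^n$, and complex interpolation of injective tensor products of $\ell_p$-spaces does \emph{not} commute with interpolation of the factors in general; this is precisely the content of Kouba's interpolation theorem, which holds only under restrictive hypotheses (and in your construction some $r_j$ can equal $1$, landing outside the scope of such results). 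You also cannot side-step this by bounding $\|A\|_{\mathcal{L}(\ell_{q_j})}$ by $\|A\|_{\mathcal{L}(\ell_{p_j})}$: since $\sum 1/q_j=1/2<\Theta$, typically $q_j>p_j$, so $B_{\ell_{q_j}^n}\supset B_{\ell_{p_j}^n}$ and the inequality goes the wrong way, with an $n$-dependent loss. Finite-dimensionality removes no difficulty here because all constants must be uniform in $n$. So as written the interpolation argument is incomplete; the known proofs of Proposition 4.1 of \cite{dimant} do not rely on interpolating the operator-form spaces but instead go through the equivalence with multiple summing operators and cotype-$2$ arguments (see also the framework recalled in the paper, equivalence $(\ref{trt})\Leftrightarrow(\ref{trt2})$), or a direct Stein-type construction in which one moves the \emph{arguments} $x^{(k)}$ analytically rather than the norm on the space of forms.
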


\begin{theorem}
$($See \cite[Corollary 2]{NacLis}$)$\label{nnn} If $p_{1},\ldots,p_{m}%
\in(1,2m]$ and $1/2\leq1/p_{1}+\cdots+1/p_{m}<1$, then there is a constant
$C_{m}$ such that
\[
\left(  \sum_{j_{1}=1}^{n}\left(  \cdots\left(  \sum_{j_{m}=1}^{n}\left\vert
A\left(  e_{j_{1}},\ldots,e_{j_{m}}\right)  \right\vert ^{s_{m}}\right)
^{\frac{s_{m-1}}{s_{m}}}\cdots\right)  ^{\frac{s_{2}}{s_{1}}}\right)
^{\frac{1}{s_{1}}}\leq C_{m}\left\Vert A\right\Vert
\]
for all $m$-linear forms $A\colon\ell_{p_{1}}^{n}\times\cdots\times\ell
_{p_{n}}^{n}\rightarrow\mathbb{K}$, with%
\[
s_{k}=\left[  \dfrac{1}{2}+\frac{m-k+1}{2m}-\left(  \frac{1}{p_{k}}%
+\cdots+\frac{1}{p_{m}}\right)  \right]  ^{-1}\text{,}%
\]
for all $k=1,\ldots,m$.
\end{theorem}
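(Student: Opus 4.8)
My plan is to prove the inequality by induction on the degree $m$, slicing off the first variable at each step; this is the same as iterating, $m-1$ times, a \emph{one-variable} vector-valued Hardy--Littlewood inequality with a well-chosen target space. The induction bottoms out at the trivial case $m=1$, where a linear form $B$ on $\ell_{p}^{n}$ satisfies $\big(\sum_{j}|B(e_{j})|^{p^{\ast}}\big)^{1/p^{\ast}}=\left\Vert B\right\Vert $.

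For the inductive step, assume the statement for $(m-1)$-linear forms and, for each $j_{1}$, put $A_{j_{1}}:=A\left(e_{j_{1}},\cdot,\ldots,\cdot\right)$, an $(m-1)$-linear form on $\ell_{p_{2}}^{n}\times\cdots\times\ell_{p_{m}}^{n}$. Let $\sigma_{1},\ldots,\sigma_{m-1}$ be the exponents the theorem produces for $m-1$ variables (with $q_{i}=p_{i+1}$); a short computation gives
\[
\frac{1}{\sigma_{l}}-\frac{1}{s_{l+1}}=\frac{m-l}{2m(m-1)}>0,\qquad l=1,\ldots,m-1,
\]
so $\sigma_{l}<s_{l+1}$ for every $l$. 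Hence, applying the inductive hypothesis to each $A_{j_{1}}$ and then enlarging every exponent in the resulting nested mixed norm --- which can only decrease it --- the block over $j_{2},\ldots,j_{m}$ attached to a fixed $j_{1}$ is at most $\big(C_{m-1}\left\Vert A\left(e_{j_{1}},\cdot,\ldots,\cdot\right)\right\Vert \big)^{s_{1}}$. Summing over $j_{1}$, taking $s_{1}$-th roots, and using $s_{1}=\lambda$ with $1/\lambda=1-\left(1/p_{1}+\cdots+1/p_{m}\right)$, the whole problem collapses to the single-coordinate estimate
\begin{equation}
\left(\sum_{j_{1}=1}^{n}\left\Vert A\left(e_{j_{1}},\cdot,\ldots,\cdot\right)\right\Vert ^{\lambda}\right)^{\frac{1}{\lambda}}\leq C_{m}\left\Vert A\right\Vert .\tag{$\star$}
\end{equation}
When $m=2$ this is precisely Theorem~\ref{tonge} (for $p_{1}\in(2,\infty]$, $p_{2}\in(1,2]$; the inner exponent $p_{2}^{\ast}$ may afterwards be enlarged to $s_{2}=\left(3/4-1/p_{2}\right)^{-1}$ by monotonicity), the remaining part of the parameter range $p_{1},\ldots,p_{m}\in(1,2m]$ being reached from Theorem~\ref{tonge} applied after relabelling the variables and decreasing some $p_{j}$ down to $2$, together with the known results on admissible permutations of the exponents (or, where that is not enough, by the same argument used for $(\star)$ below). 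So $(\star)$ is where essentially all the work sits.

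The way I would attack $(\star)$ is to recognise it as the $1$-linear instance of \cite[Theorem~2.2]{alb2}: writing $T_{A}\colon\ell_{p_{1}}^{n}\to Y:=\mathcal{L}\!\left(\ell_{p_{2}}^{n}\times\cdots\times\ell_{p_{m}}^{n};\mathbb{K}\right)$ for the linear operator $T_{A}(x)=A\left(x,\cdot,\ldots,\cdot\right)$, one has $\left\Vert T_{A}\right\Vert =\left\Vert A\right\Vert $ and $T_{A}(e_{j_{1}})=A\left(e_{j_{1}},\cdot,\ldots,\cdot\right)$, so $(\star)$ reads $\big(\sum_{j_{1}}\left\Vert T_{A}(e_{j_{1}})\right\Vert _{Y}^{\lambda}\big)^{1/\lambda}\leq C_{m}\left\Vert T_{A}\right\Vert $. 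The hard part will be to check that $Y$ satisfies the hypotheses of \cite[Theorem~2.2]{alb2} and, above all, that the exponent it admits for $Y$-valued linear maps on $\ell_{p_{1}}^{n}$ is exactly $\lambda=\big(1/p_{1}^{\ast}-(1/p_{2}+\cdots+1/p_{m})\big)^{-1}$, i.e.\ that the ``cotype-type defect'' of $Y$ contributes precisely $1/p_{2}+\cdots+1/p_{m}$; an alternative is to prove $(\star)$ directly by the usual route, combining Khinchin's inequality with the definition of $\left\Vert A\right\Vert $. I expect this identification of the optimal exponent for the target space $Y$, with a constant depending on $m$ only, to be the main obstacle; threading the constants $C_{m-1}\rightsquigarrow C_{m}$ through the induction is then routine.
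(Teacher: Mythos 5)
This paper does not prove Theorem~\ref{nnn}: it is quoted verbatim from \cite[Corollary 2]{NacLis}, whose proof (like the paper's proof of the strengthening, Theorem~\ref{9876}, in Sections 3--4) runs through the Anisotropic Regularity Principle, not through an induction on the degree. So your route is certainly different; the question is whether it closes. Your exponent bookkeeping is correct --- the identity $1/\sigma_l-1/s_{l+1}=(m-l)/(2m(m-1))$ and $s_1=\lambda$ both check out --- but as written the argument has two genuine holes.

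First, the inductive hypothesis does not apply to the slices $A_{j_1}$. For $(m-1)$ variables the theorem requires $p_2,\dots,p_m\in(1,2(m-1)]$ \emph{and} $1/2\le 1/p_2+\cdots+1/p_m<1$, and neither is forced by the $m$-variable hypotheses $p_j\in(1,2m]$, $1/2\le 1/p_1+\cdots+1/p_m<1$. For instance $m=3$, $p_1=p_2=p_3=6$ satisfies the hypotheses but the slice has $p_2=p_3=6>4=2(m-1)$ and $1/p_2+1/p_3=1/3<1/2$, so the induction step is simply inapplicable; one would have to interleave the Praciano--Pereira regime \eqref{aniso}--\eqref{211}, and the constraints $t_j<2$ there need not hold for your $\sigma_j$. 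Second, and more seriously, your $(\star)$ is where all the content lives, and you do not prove it. The route you propose --- viewing $T_A:\ell_{p_1}^n\to Y=\mathcal{L}(\ell_{p_2}^n,\dots,\ell_{p_m}^n;\mathbb{K})$ and invoking \cite[Theorem~2.2]{alb2} --- fails as stated because $Y$ has no cotype $q<\infty$ with constant bounded in $n$ (its cotype constants blow up as $n\to\infty$), and bounding $\pi_{r,1}(\mathrm{id}_Y)$ uniformly in $n$ is essentially equivalent to an HL inequality of the very kind you are trying to prove. Even the base case appeal to Theorem~\ref{tonge} only covers $p_1>2\ge p_2$, and the suggested patch (``decreasing some $p_j$ down to $2$'') changes the value of $\lambda$ and so does not yield $(\star)$ at the target exponent. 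By contrast, the Regularity Principle used in \cite{NacLis} and here avoids both problems: it starts from a single known isotropic coincidence (Bohnenblust--Hille or Dimant--Sevilla-Peris) and redistributes the exponent anisotropically in one shot, with no descent in $m$ and no vector-valued summing estimate for the slice space $Y$.
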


\begin{theorem}
$($See \cite[Theorem 3.2]{AAPR}$)$\label{aaa} If $p_{1},\ldots,p_{m-1}%
\in(1,\infty]$, $p_{m}\in(1,2]$ and $1/2\leq1/p_{1}+\cdots+1/p_{m}<1$, then
\[
\left(  \sum_{j_{1}=1}^{n}\left(  \sum_{j_{2}=1}^{n}\cdots\left(  \sum
_{j_{m}=1}^{n}\left\vert A(e_{j_{1}},\ldots,e_{j_{m}})\right\vert ^{s_{m}%
}\right)  ^{\frac{s_{m-1}}{s_{m}}}\cdots\right)  ^{\frac{s_{1}}{s_{2}}%
}\right)  ^{\frac{1}{s_{1}}}\leq\left\Vert A\right\Vert \text{,}%
\]
for all $m$-linear forms $A\colon\ell_{p_{1}}^{n}\times\cdots\times\ell
_{p_{m}}^{n}\rightarrow\mathbb{K}$, where%
\[
s_{k}=\left[  1-\left(  \frac{1}{p_{k}}+\cdots+\frac{1}{p_{m}}\right)
\right]  ^{-1}\text{,}%
\]
for all $k=1,\ldots,m$. Moreover the exponents $s_{1},\ldots,s_{m}$ are optimal.
\end{theorem}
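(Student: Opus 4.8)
The plan is to reduce Theorem~\ref{aaa} to a vector‑valued anisotropic inequality that can be pushed through by induction, and then to read off optimality from explicit extremal forms.

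First I would \emph{reduce} using duality in the last slot. Since $p_m\in(1,2]$, the form $A$ corresponds to an $(m-1)$‑linear map $\widetilde A\colon\ell_{p_1}^n\times\cdots\times\ell_{p_{m-1}}^n\to(\ell_{p_m}^n)^{*}=\ell_{p_m^{*}}^n$ with $\widetilde A(e_{j_1},\ldots,e_{j_{m-1}})=(A(e_{j_1},\ldots,e_{j_m}))_{j_m}$ and $\|\widetilde A\|=\|A\|$; here $p_m^{*}=s_m\ge 2$ and $\bigl(\sum_{j_m}|A(e_{j_1},\ldots,e_{j_m})|^{s_m}\bigr)^{1/s_m}=\|\widetilde A(e_{j_1},\ldots,e_{j_{m-1}})\|_{\ell_{p_m^{*}}^n}$. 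So it suffices to prove: \emph{for every $r\ge 2$, every $k\ge 0$ and all $q_1,\ldots,q_k\in[r,\infty]$ with $\sum_{i=1}^{k}1/q_i<1/r$, every $k$‑linear $T\colon\ell_{q_1}^n\times\cdots\times\ell_{q_k}^n\to\ell_r^n$ satisfies}
\[
\left(\sum_{j_1}\left(\cdots\left(\sum_{j_k}\bigl\|T(e_{j_1},\ldots,e_{j_k})\bigr\|_{\ell_r^n}^{u_k}\right)^{u_{k-1}/u_k}\cdots\right)^{u_1/u_2}\right)^{1/u_1}\le\|T\|,\qquad \frac1{u_\ell}:=\frac1r-\sum_{i=\ell}^{k}\frac1{q_i}.
\]
Applying this with $r=p_m^{*}$, $k=m-1$, $q_i=p_i$ — the constraints $q_i\ge r$ and $\sum_i 1/q_i<1/r$ being exactly $p_i\ge p_m^{*}$ and $\sum_{i=1}^{m}1/p_i<1$, both automatic from $\sum_{i=1}^{m}1/p_i<1$ — and noting $1/u_\ell=1-\sum_{i=\ell}^{m}1/p_i=1/s_\ell$, one recovers Theorem~\ref{aaa}.

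I would prove the displayed claim by \emph{induction on $k$}; $k=0$ is the trivial $\|T\|_{\ell_r^n}\le\|T\|$. For the step, write $\mathcal Z$ for the iterated mixed‑norm space with exponents $(u_2,\ldots,u_k)$ over $\ell_r^n$ ($\mathcal Z=\ell_r^n$ if $k=1$); all its exponents are $\ge 2$. I first handle the two extreme choices of $q_1$. (i) If $q_1=\infty$, then $u_1=u_2$ and, for each Rademacher/Steinhaus vector $\varepsilon=(\varepsilon_{j_1})$, the form $T_\varepsilon:=T(\varepsilon,\cdot)$ obeys $\|T_\varepsilon\|\le\|T\|$, so the inductive hypothesis gives $\|(T_\varepsilon(e_{j_2},\ldots,e_{j_k}))\|_{\mathcal Z}\le\|T\|$. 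Raising to the power $u_1$, averaging over $\varepsilon$, and using that $\mathcal Z$ (iterated $\ell_p$'s, all exponents $\ge 2$, outermost equal to $u_1$) has cotype $u_1$ with constant $1$ — which follows, by induction on the nesting depth, from $\mathbb E_\varepsilon|\sum\varepsilon_i a_i|^{2}=\sum|a_i|^{2}$, Jensen applied to $t\mapsto t^{u_1/2}$, the embedding $\ell_2\hookrightarrow\ell_{u_1}$, and monotonicity of $\ell_p$‑norms — yields $\sum_{j_1}\|(T(e_{j_1},e_{j_2},\ldots,e_{j_k}))\|_{\mathcal Z}^{u_1}\le\|T\|^{u_1}$, i.e. the claim. (ii) If $1/q_1=1/r-\sum_{i=2}^{k}1/q_i$, then $u_1=\infty$ and, since $\|T(e_{j_1},\cdot)\|\le\|T\|$, the inductive hypothesis applied to $T(e_{j_1},\cdot)$ gives $\|(T(e_{j_1},e_{j_2},\ldots,e_{j_k}))\|_{\mathcal Z}\le\|T\|$ for each $j_1$, and the sup over $j_1$ is the claim. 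Finally I would \emph{interpolate} (i) and (ii): the coefficient map $T\mapsto(T(e_{j_1},\ldots,e_{j_k}))$ is linear, and the domains $\mathcal L(\ell_{q_1}^n,\ldots,\ell_{q_k}^n;\ell_r^n)$ and the mixed‑norm codomains form interpolation scales in $q_1$ and $u_1$ (the remaining parameters fixed); with parameter $\theta\in[0,1]$ one gets $1/q_1^{(\theta)}=\theta\bigl(1/r-\sum_{i\ge 2}1/q_i\bigr)$, sweeping the whole admissible range of $q_1$, together with $1/u_1^{(\theta)}=(1-\theta)/u_2=1/r-\sum_{i=1}^{k}1/q_i^{(\theta)}$, all of $u_2,\ldots,u_k$ unchanged, and constant $1^{1-\theta}1^{\theta}=1$. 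This closes the induction. The one delicate point is exactly this interpolation step — the ``regularity technique'' of the paper: I must have available a Riesz--Thorin/Stein‑type theorem for the linear coefficient map between spaces of multilinear forms valued in iterated mixed‑norm sequence spaces and the matching mixed‑norm targets, with the bound multiplicative in $\theta$, so that two constant‑$1$ endpoints force a constant‑$1$ conclusion; everything else in the induction is routine.

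For \emph{optimality}, fix $k\in\{1,\ldots,m\}$ and test on the partial‑diagonal form
\[
A(x^{(1)},\ldots,x^{(m)})=x_1^{(1)}\cdots x_1^{(k-1)}\cdot\sum_{j=1}^{n}x_j^{(k)}x_j^{(k+1)}\cdots x_j^{(m)}.
\]
Hölder's inequality and its sharpness give $\|A\|=n^{\,1-(1/p_k+\cdots+1/p_m)}=n^{1/s_k}$, while $A(e_{j_1},\ldots,e_{j_m})$ is supported on $\{j_1=\cdots=j_{k-1}=1,\ j_k=\cdots=j_m\}$, so for \emph{any} choice of the remaining exponents the left‑hand side of the anisotropic inequality equals $n^{1/t_k}$. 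Letting $n\to\infty$ forces $t_k\ge s_k$, so no single exponent $s_k$ can be decreased and $(s_1,\ldots,s_m)$ is globally sharp.
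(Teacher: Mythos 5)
Your argument follows a genuinely different route from the paper. The paper does not prove Theorem~\ref{aaa} ab initio; it presents a ``different and simplified proof'' as the special case $k_0=m$ of Theorem~\ref{9876}, whose proof starts from the \emph{isotropic} inequality of Dimant and Sevilla--Peris (Theorem~\ref{dddd}), translates it into a coincidence statement for multiple summing operators, and then invokes the Anisotropic Regularity Principle of Albuquerque and Rezende as a black box to upgrade the single exponent $p_m^{*}$ into the full anisotropic profile $s_k=[1-(1/p_k+\cdots+1/p_m)]^{-1}$. You instead dualize the last slot into $\ell_{p_m^{*}}^n$, run an induction on the number of $\ell_q$-variables, obtain the two extreme cases by a cotype argument and a trivial estimate, and try to close the gap by interpolating in the single parameter $q_1$.

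Several of your ingredients are fine. The duality reduction is correct (including the check that $\sum 1/p_i<1$ forces each $p_i>p_m^{*}$, so all your $q_i$ lie in $[r,\infty]$). The optimality argument via partial-diagonal tensors is correct and standard. The cotype-$u_1$-with-constant-$1$ claim for $\mathcal{Z}=\ell_{u_2}^n(\ell_{u_3}^n(\cdots(\ell_r^n)))$ also holds, because the exponents satisfy $u_1=u_2\ge u_3\ge\cdots\ge u_k\ge r\ge 2$, so one can iterate Jensen applied to $t\mapsto t^{u_\ell/u_{\ell+1}}$, the identity $\mathbb{E}_\varepsilon|\sum\varepsilon_i a_i|^2=\sum|a_i|^2$, and the norm-$1$ embeddings $\ell_2\hookrightarrow\ell_q$ and $\ell_{u_{\ell+1}}\hookrightarrow\ell_{u_\ell}$ from the inside out.

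The gap is exactly where you flag it: the interpolation step is not justified, and cannot be justified by a straightforward Riesz--Thorin argument. The obstacle is that your linear coefficient map $T\mapsto(T(e_{j_1},\ldots,e_{j_k}))$ has as its domain an operator-norm space, and the scale $q_1\mapsto\mathcal{L}(\ell_{q_1}^n,\ldots,\ell_{q_k}^n;\ell_r^n)$ is \emph{not} a complex-interpolation scale in the direction you need. The general result (Calder\'on's bilinear interpolation applied to the evaluation map) gives only the norm-$1$ inclusion $[\mathcal{L}(\ell_{q_1^{(0)}}^n,\ldots;Y),\mathcal{L}(\ell_{q_1^{(1)}}^n,\ldots;Y)]_\theta\hookrightarrow\mathcal{L}(\ell_{q_1^{(\theta)}}^n,\ldots;Y)$; the interpolation space is typically a proper subspace, and a form $T$ bounded on $\ell_{q_1^{(\theta)}}^n\times\cdots$ need not admit an analytic family $T_z$ through the strip with controlled norms at both endpoints, which is what any Riesz--Thorin/Stein argument would require. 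This is precisely the difficulty the Anisotropic Regularity Principle is designed to bypass: it is \emph{not} proved by Riesz--Thorin but by a dedicated regularity/optimization argument (the ``universality problem'' of Section 2, Theorem~\ref{RP} and its anisotropic variant), and its conclusion — the norm-$1$ inclusion of multiple summing classes, equation (\ref{9080}) — is the exact missing tool in your proof. Until you can either prove the interpolation theorem you invoke, or reproduce the Anisotropic Regularity Principle by some other means, the inductive step does not close.
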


Our first main theorem, stated below, encompasses/generalizes/extends all the
previous results in an essentially optimal fashion. More precisely, we extend
the theorem of Osikiewicz and Tonge (Theorem \ref{tonge}) to the multilinear
setting, improve the exponents provided by Theorems \ref{dddd} and \ref{nnn},
and relax the hypothesis $1<p_{m}\leq2<p_{1},\ldots,p_{m-1}$ of Theorem
\ref{aaa}; note that we offer a different and simplified proof of Theorem
\ref{aaa}. We also recover, by a completely different approach, the optimal
constants from Theorem \ref{tonge} and Theorem \ref{aaa} (see Remark \ref{8z}).

\begin{theorem}
\label{9876}Let $p_{1},\ldots,p_{m}\in\left[  1,\infty\right]  $ be such that
$1/2\leq1/p_{1}+\cdots+1/p_{m}<1$. There is a constant $C_{m}$ such that
\begin{equation}
\left(  \sum_{j_{1}=1}^{n}\left(  \cdots\left(  \sum_{j_{m}=1}^{n}\left\vert
A\left(  e_{j_{1}},\ldots,e_{j_{m}}\right)  \right\vert ^{s_{m}}\right)
^{\frac{s_{m}-1}{s_{m}}}\cdots\right)  ^{\frac{s_{1}}{s_{2}}}\right)
^{\frac{1}{s_{1}}}\leq C_{m}\left\Vert A\right\Vert \text{,} \label{BER}%
\end{equation}
for all $m$-linear forms $A\colon\ell_{p_{1}}^{n}\times\cdots\times\ell
_{p_{m}}^{n}\rightarrow\mathbb{K}$, where%
\begin{equation}
s_{k}=\left\{
\begin{array}
[c]{ll}%
\left[  1-\left(  \dfrac{1}{p_{k}}+\cdots+\dfrac{1}{p_{m}}\right)  \right]
^{-1}\text{,} & \text{if }k\leq k_{0}:=\max\left\{  t:\dfrac{1}{p_{t}}%
+\cdots+\dfrac{1}{p_{m}}\geq\dfrac{1}{2}\right\}  \text{,}\\
\vspace{-0.3cm} & \\
2\text{,} & \text{if }k>k_{0}\text{.}%
\end{array}
\right.  \label{strelanatal}%
\end{equation}
Moreover:

\begin{enumerate}
\item[\emph{(i)}] The exponents $s_{1},\ldots,s_{k_{0}}$ are optimal;

\item[\emph{(ii)}] If $p_{k_{0}}\geq2$, the exponents $\left(  s_{1}%
,\ldots,s_{m}\right)  $ are globally sharp.
\end{enumerate}
\end{theorem}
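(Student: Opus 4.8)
The plan is to derive the inequality by an interpolation/regularity argument built on the known anisotropic Hardy--Littlewood inequality in the case $\sum 1/p_j\le 1/2$, namely the equivalence $(\ref{aniso})\Leftrightarrow(\ref{211})$. First I would reduce to the borderline situation where the index $k_0$ is defined: by definition of $k_0$ we have $1/p_{k_0}+\cdots+1/p_m\ge 1/2$ but $1/p_{k_0+1}+\cdots+1/p_m<1/2$, so the ``tail'' variables $j_{k_0+1},\dots,j_m$ are governed by a genuine $\sum 1/p_j<1/2$-type inequality with exponent $2$ at every remaining coordinate, which is exactly (\ref{aniso}) applied to the $(m-k_0)$-linear form obtained by freezing the first $k_0$ variables. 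This yields, after bounding $\|A(e_{j_1},\dots,e_{j_{k_0}},\cdot,\dots,\cdot)\|$ in terms of $\|A\|$, control of the inner block $\bigl(\sum_{j_{k_0+1}}\cdots\sum_{j_m}|A(\cdots)|^{2}\bigr)^{1/2}$-type sums. Then the ``head'' coordinates $j_1,\dots,j_{k_0}$, each carrying the Osikiewicz--Tonge-type exponent $s_k=[1-(1/p_k+\cdots+1/p_m)]^{-1}$, are handled inductively: for $k=k_0$ this is precisely Theorem \ref{tonge} in the bilinear form obtained by freezing $j_1,\dots,j_{k_0-1}$ and treating the remaining tail as a single ``second variable'' in an appropriate mixed-norm space, and the general $k<k_0$ step follows by the same freezing-plus-H\"older bootstrap, moving one coordinate out at a time. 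Assembling the nested sums via Minkowski's integral inequality (to commute the order of summation correctly) and Hölder produces (\ref{BER}) with $C_m$ depending only on $m$.

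The alternative, and probably cleaner, route I would actually write up is the \emph{regularity principle} alluded to in the paper's title: one starts from an inequality with a single non-optimal exponent vector (e.g.\ the isotropic $\lambda$ of Theorem \ref{dddd}, or the mixed $(s_1,\dots,s_m)$ of Theorem \ref{aaa} in its restricted range) and applies an abstract lemma stating that if a mixed-$(t_1,\dots,t_m)$-norm estimate holds and the $j$-th coordinate space can be ``regularized'' — replaced by a better $\ell_{p_j}$ using a multiple-summing / inclusion argument — then one obtains the estimate for the improved exponent at position $j$ while keeping the others fixed. Iterating this coordinatewise, in the order $k=1,2,\dots,k_0$, upgrades each $s_k$ to the claimed value $[1-(1/p_k+\cdots+1/p_m)]^{-1}$; the coordinates $k>k_0$ already sit at the natural value $2$ and need no upgrade. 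The hypothesis $1/2\le\sum 1/p_j$ guarantees $s_k\ge 1$ throughout, and $\sum 1/p_j<1$ guarantees $s_1<\infty$, so all exponents are legitimate.

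For the optimality assertions, part (i) I would establish by exhibiting, for each $k\le k_0$ and each $\varepsilon>0$, an $m$-linear form (a tensor of Kahane--Salem--Zygmund type in the first coordinates together with a diagonal/identity-type form in the tail) showing that the exponent $s_k-\varepsilon$ forces the constant to blow up with $n$; this is the standard lower-bound construction and only the bookkeeping of the exponent at position $k$ needs care. Part (ii), global sharpness under $p_{k_0}\ge 2$, requires in addition showing that none of the tail exponents $s_k=2$ ($k>k_0$) can be lowered; here I would invoke the equivalence $(\ref{aniso})\Leftrightarrow(\ref{211})$ together with the characterization of globally sharp tuples from \cite{PSRT} (Definition 7.1), checking that the point $(s_1,\dots,s_m)$ produced by (\ref{strelanatal}) lies on the boundary of the admissible region, i.e.\ saturates the relevant sum-of-reciprocals constraint; the condition $p_{k_0}\ge 2$ is exactly what makes the transition coordinate $k_0$ ``tight'' on both sides.

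The main obstacle I expect is the commutation of the nested sums: the inner tail estimate naturally produces the $\ell_2$-norm on the \emph{innermost} variables, but (\ref{BER}) requires the $s_k$-norms to be nested from the \emph{outside} in, so a careful application of Minkowski's inequality for mixed norms (valid precisely because $s_1\le s_2\le\cdots$ fails in general — one must track where the exponents increase versus decrease) is needed, and getting the direction of every Hölder/Minkowski step right while keeping the constant independent of $n$ is the delicate part. The freezing argument that reduces the $m$-linear estimate to the bilinear Osikiewicz--Tonge inequality also needs the observation that $\|A(e_{j_1},\dots,e_{j_{k-1}},\cdot,\dots,\cdot)\|\le\|A\|$ uniformly, which is immediate but must be stated, and one must verify that the pair of conjugate indices at the critical step indeed falls in the range $(2,\infty]\times(1,2]$ demanded by Theorem \ref{tonge} — this is where the case split at $k_0$ (and, for (ii), the extra hypothesis $p_{k_0}\ge 2$) enters decisively.
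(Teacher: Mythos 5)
Your proposal correctly identifies the two pillars of the paper's argument --- a regularity/inclusion principle for multiple summing operators for the positive part, and Kahane--Salem--Zygmund-type lower bounds for the optimality --- so the general philosophy is right. But both routes you sketch are missing the single technical device that makes the paper's proof close, and as written neither route goes through.

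The paper does not iterate a single-coordinate upgrade, nor does it bootstrap from the bilinear Osikiewicz--Tonge inequality via freezing and Minkowski. Its proof of~(\ref{BER}) consists of one auxiliary set-up followed by \emph{one} application of the Anisotropic Regularity Principle; the auxiliary set-up is the crux, and it is what your proposal omits. If $k_{0}=m$ one works with the enlarged domain $\ell_{\infty}^{n}\times\cdots\times\ell_{\infty}^{n}\times\ell_{p_{m}}^{n}$, to which Theorem~\ref{dddd} applies with exponent $p_{m}^{\ast}$ (since $1/2\le 1/p_{m}<1$). If $k_{0}<m$ one chooses $\delta\ge 1$ with $1/(\delta p_{k_{0}})+1/p_{k_{0}+1}+\cdots+1/p_{m}=1/2$ and works with $\ell_{\infty}^{n}\times\cdots\times\ell_{\infty}^{n}\times\ell_{\delta p_{k_{0}}}^{n}\times\ell_{p_{k_{0}+1}}^{n}\times\cdots\times\ell_{p_{m}}^{n}$, where (\ref{i99}) gives exponent $2$. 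Through the framed equivalence this becomes a coincidence result for multiple $(2;q_{1}^{\ast},\ldots,q_{m}^{\ast})$-summing operators with \emph{small} weak exponents $q_{k}^{\ast}\le p_{k}^{\ast}$, to which the Anisotropic Regularity Principle applies in one shot, producing simultaneously all of $s_{1},\ldots,s_{m}$ at weak exponents $p_{1}^{\ast},\ldots,p_{m}^{\ast}$. Your ``alternative route'' proposes instead to start from Theorem~\ref{dddd} on the target domain and regularize one coordinate at a time; but there the weak exponents are already $p_{k}^{\ast}$, and if you keep them fixed the Regularity Principle forces $s_{k}=r$ for all $k$ and gives nothing --- there is no room to act without the $\infty$-padding (or the $\delta p_{k_{0}}$-dilation). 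Your ``main route'' via freezing plus H\"older/Minkowski and Osikiewicz--Tonge would, if carried through, essentially reconstruct the proof of Theorem~\ref{aaa} in \cite{AAPR}; it needs vector-valued machinery to re-enter the outer sums, and in addition Theorem~\ref{tonge} carries the restriction $(p_{1},p_{2})\in(2,\infty]\times(1,2]$ which Theorem~\ref{9876} deliberately removes, so this route cannot yield the generality claimed.

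On optimality: for~(i) the paper simply invokes \cite[Lemma 3.1]{AAPR}; your KSZ-style lower-bound instinct is fine. For~(ii), though, there is a genuine gap in your plan. The equivalence $(\ref{aniso})\Leftrightarrow(\ref{211})$ and the obstruction~(\ref{colombia}) are available only when every $p_{j}\ge 2$, which is not assumed for $j<k_{0}$. The paper first uses Lemma~\ref{8u8u} to restrict the hypothetical improved inequality to the $(m-k_{0}+1)$-linear form on $\ell_{p_{k_{0}}}^{n}\times\cdots\times\ell_{p_{m}}^{n}$; on that reduced form all exponents are $\ge 2$ because $p_{k}>2$ for $k>k_{0}$ automatically and $p_{k_{0}}\ge 2$ by hypothesis --- this is precisely where the extra hypothesis in~(ii) enters --- and only then does~(\ref{colombia}) yield a contradiction. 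Without this coordinate-restriction step your argument for~(ii) does not close.
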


Notice that, under the hypotheses of the previous theorem, we have $p_{k}>2$
for all $k=k_{0}+1,\ldots,m$. In fact, by the definition of $k_{0}$, we have
$1/p_{k_{0}+1}+\cdots+1/p_{m}<1/2$ and hence $p_{k}>2$ for all $k=k_{0}%
+1,\ldots,m$.

The improvement of Theorems \ref{tonge}, \ref{dddd} and \ref{nnn} is easily
observed from the statement of our main theorem (see also Table \ref{2dez}).
As to Theorem \ref{aaa}, note that an immediate corollary of Theorem
\ref{9876} yields the following result, that recovers Theorem \ref{aaa} for
the particular case $i=m$:

\begin{corollary}
Let $p_{1},\ldots,p_{m}\in\left[  1,\infty\right]  $ be such that
$1/2\leq1/p_{1}+\cdots+1/p_{m}<1$ with $1<p_{i}\leq2$ for a certain $i$. There
is a constant $C_{m}$ such that%
\[
\left(  \sum_{j_{1}=1}^{n}\left(  \sum_{j_{2}=1}^{n}\cdots\left(  \sum
_{j_{m}=1}^{n}\left\vert A\left(  e_{j_{1}},\ldots,e_{j_{m}}\right)
\right\vert ^{s_{m}}\right)  ^{\frac{s_{m-1}}{s_{m}}}\cdots\right)
^{\frac{s_{1}}{s_{2}}}\right)  ^{\frac{1}{s_{1}}}\leq C_{m}\left\Vert
A\right\Vert \text{,}%
\]
for all $m$-linear forms $A\colon\ell_{p_{1}}^{n}\times\cdots\times\ell
_{p_{m}}^{n}\rightarrow\mathbb{K}$, where%
\[
s_{k}=\left\{
\begin{array}
[c]{ll}%
\left[  1-\left(  \dfrac{1}{p_{k}}+\cdots+\dfrac{1}{p_{m}}\right)  \right]
^{-1}\text{,} & \text{if }k\leq i\text{,}\\
\vspace{-0.3cm} & \\
2\text{,} & \text{if }k>i\text{.}%
\end{array}
\right.
\]
Moreover, the exponents $s_{1},\ldots,s_{i}$ are optimal.
\end{corollary}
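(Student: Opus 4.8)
The plan is to obtain the corollary as a direct specialization of Theorem \ref{9876}. The hypotheses imposed here on $(p_{1},\dots,p_{m})$ are exactly those of Theorem \ref{9876} together with the extra requirement that $1<p_{i}\leq2$ for some index $i$, so Theorem \ref{9876} applies without modification; the only point needing an argument is that this extra requirement forces the threshold $k_{0}$ appearing in \eqref{strelanatal} to coincide with $i$.

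Thus I would first check that $k_{0}=i$. The tail sums $t\mapsto 1/p_{t}+\cdots+1/p_{m}$ are nonincreasing, and $p_{i}\leq2$ gives $1/p_{i}\geq1/2$, so $1/p_{i}+\cdots+1/p_{m}\geq1/2$; hence $i$ belongs to $\{t:1/p_{t}+\cdots+1/p_{m}\geq1/2\}$ and therefore $i\leq k_{0}$. Conversely, if $k_{0}\geq i+1$, then by the maximality of $k_{0}$ and the monotonicity of the tails one would have $1/p_{i+1}+\cdots+1/p_{m}\geq1/2$, and adding $1/p_{i}\geq1/2$ would give $1/p_{1}+\cdots+1/p_{m}\geq1/p_{i}+\cdots+1/p_{m}\geq1$, contradicting $1/p_{1}+\cdots+1/p_{m}<1$. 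Hence $k_{0}=i$; in particular the hypotheses force $p_{k}>2$ for every $k>i$, as noted right after Theorem \ref{9876}.

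Substituting $k_{0}=i$ into \eqref{strelanatal}, the exponents of Theorem \ref{9876} become $s_{k}=[1-(1/p_{k}+\cdots+1/p_{m})]^{-1}$ for $k\leq i$ and $s_{k}=2$ for $k>i$, which is precisely the family in the statement; inequality \eqref{BER} is then the asserted inequality, and the optimality of $s_{1},\dots,s_{i}$ is exactly item (i) of Theorem \ref{9876} (for $i=m$ this recovers the optimality part of Theorem \ref{aaa}). I do not expect any genuine obstacle here: the corollary is pure bookkeeping on top of Theorem \ref{9876}, the only mildly subtle point being the short computation $k_{0}=i$ carried out above.
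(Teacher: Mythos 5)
Your proof is correct and follows the same route the paper intends (the paper declares the corollary ``immediate'' from Theorem \ref{9876} without elaborating). The key bookkeeping step $k_{0}=i$ is verified cleanly: $1/p_{i}\geq1/2$ gives $i\leq k_{0}$, and if $k_{0}\geq i+1$ then $1/p_{i+1}+\cdots+1/p_{m}\geq1/2$ together with $1/p_{i}\geq1/2$ would force $1/p_{1}+\cdots+1/p_{m}\geq1$, a contradiction; everything else is direct substitution into \eqref{strelanatal}, \eqref{BER}, and item (i) of Theorem \ref{9876}.
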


Our second main result is related to a vector-valued version of the HL
inequalities. As it will be explained later, it is a kind of extension of
\cite[Theorem 2.2]{alb2}. More precisely, it reads as follows (the precise
definitions of the notions and terminology used in its statement will be
defined in the next section):

\begin{theorem}
\label{vector}Let $p_{1},\ldots,p_{m}\in\left[  1,\infty\right]  $, $E$ be a
Banach space, $F$ be a Banach space of cotype $q$ and $1\leq r\leq q$, with
\[
\frac{1}{r}-\frac{1}{q}\leq\frac{1}{p_{1}}+\cdots+\frac{1}{p_{m}}<\frac{1}%
{r}\text{.}%
\]
Then, there is a constant $C_{m}$ such that
\[
\left(  \sum_{j_{1}=1}^{n}\left(  \cdots\left(  \sum_{j_{m}=1}^{n}\left\Vert
vA\left(  e_{j_{1}},\ldots,e_{j_{m}}\right)  \right\Vert _{F}^{s_{m}}\right)
^{\frac{s_{m}-1}{s_{m}}}\cdots\right)  ^{\frac{s_{1}}{s_{2}}}\right)
^{\frac{1}{s_{1}}}\leq C_{m}\left\Vert A\right\Vert \pi_{r,1}\left(  v\right)
\]
for all $m$-linear operators $A\colon\ell_{p_{1}}^{n}\times\cdots\times
\ell_{p_{m}}^{n}\rightarrow E$ and all absolutely $\left(  r,1\right)
$-summing operators $v\colon E\rightarrow F$, with%
\[
s_{k}=\left\{
\begin{array}
[c]{ll}%
\left[  \dfrac{1}{r}-\left(  \dfrac{1}{p_{k}}+\cdots+\dfrac{1}{p_{m}}\right)
\right]  ^{-1}\text{,} & \text{if }k\leq k_{0}:=\max\left\{  t:\dfrac{1}%
{p_{t}}+\cdots+\dfrac{1}{p_{m}}\geq\dfrac{1}{r}-\dfrac{1}{q}\right\}
\text{,}\\
\vspace{-0.3cm} & \\
q\text{,} & \text{if }k>k_{0}\text{.}%
\end{array}
\right.
\]

\end{theorem}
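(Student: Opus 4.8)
The plan is to prove the theorem by induction on $m$, mimicking the proof of Theorem~\ref{9876} --- the special case $E=F=\mathbb{K}$, $v=\mathrm{id}_{\mathbb{K}}$, $r=1$, $q=2$ --- but inserting the operator $v$ and the cotype of $F$ at the two points where the scalar argument uses the boundedness of functionals on $\ell_{p}^{n}$ and the $L_{2}$ form of Khinchin's inequality. I would first record three elementary consequences of the definition of the $s_{k}$ and of $k_{0}$, together with the hypothesis $1/p_{1}+\cdots+1/p_{m}<1/r$: the telescoping identity $1/s_{k}+1/p_{k}=1/s_{k+1}$ for $1\le k<k_{0}$; the inequality $p_{k}^{\ast}\le s_{k}$ for every $k$; and the ordering $s_{1}\ge\cdots\ge s_{k_{0}}\ge q=s_{k_{0}+1}=\cdots=s_{m}$ (so, in particular, every $s_{k}\ge q\ge r\ge1$).

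In the base case $m=1$ the hypothesis forces $k_{0}=1$ and $r<p_{1}$, so $s_{1}=[1/r-1/p_{1}]^{-1}$; since $(A(e_{j}))_{j}$ is weakly $p_{1}^{\ast}$-summable in $E$ with constant $\le\|A\|$ and, by the Inclusion Theorem for absolutely summing operators, $v\in\Pi_{s_{1},p_{1}^{\ast}}(E;F)$ with $\pi_{s_{1},p_{1}^{\ast}}(v)\le\pi_{r,1}(v)$ (the relation $1-1/r\le 1/p_{1}^{\ast}-1/s_{1}$ needed there being an equality here), one obtains $\big(\sum_{j}\|vA(e_{j})\|_{F}^{s_{1}}\big)^{1/s_{1}}\le\pi_{r,1}(v)\|A\|$ at once. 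For the inductive step I would fix $x^{(1)}$ in the closed unit ball of $\ell_{p_{1}}^{n}$ and view $A(x^{(1)},\cdot,\dots,\cdot)$ as an $(m-1)$-linear operator $\ell_{p_{2}}^{n}\times\cdots\times\ell_{p_{m}}^{n}\to E$ of norm $\le\|A\|\,\|x^{(1)}\|_{p_{1}}$. If $k_{0}\ge 2$, the tuple $(p_{2},\dots,p_{m})$ again satisfies $1/r-1/q\le 1/p_{2}+\cdots+1/p_{m}<1/r$ with threshold $k_{0}$, and the induction hypothesis applies; if $k_{0}=1$, then $1/p_{2}+\cdots+1/p_{m}<1/r-1/q$ and the first-case vector-valued Hardy--Littlewood inequality of \cite[Theorem~2.2]{alb2}, in its $(r,q)$-form with $F$ of cotype $q$, applies to $v\circ A(x^{(1)},\cdot)$ with the constant exponent $q$ in every position (admissible because $1/p_{2}+\cdots+1/p_{m}<1/r-1/q$). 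In either case, for every such $x^{(1)}$,
\[
\Bigl(\sum_{j_{2}=1}^{n}\Bigl(\cdots\Bigl(\sum_{j_{m}=1}^{n}\bigl\|vA(x^{(1)},e_{j_{2}},\dots,e_{j_{m}})\bigr\|_{F}^{s_{m}}\Bigr)^{\!s_{m-1}/s_{m}}\cdots\Bigr)^{\!s_{2}/s_{3}}\Bigr)^{\!1/s_{2}}\le C_{m}\,\pi_{r,1}(v)\,\|A\|\,\|x^{(1)}\|_{p_{1}},
\]
the exponents $s_{2},\dots,s_{m}$ coinciding with those in the statement because they do not involve $p_{1}$.

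It then remains to upgrade this family of bounds, indexed by $x^{(1)}$ in the unit ball, to the $\ell_{s_{1}}$-summability over $j_{1}$ of the coefficients $w_{j_{1}}:=\bigl(vA(e_{j_{1}},e_{j_{2}},\dots,e_{j_{m}})\bigr)_{j_{2},\dots,j_{m}}$, regarded as elements of the mixed-norm space $Y:=\ell_{s_{2}}^{n}\bigl(\cdots\ell_{s_{m}}^{n}(F)\bigr)$, which has cotype $\max(s_{2},q)=s_{2}$. The displayed inequality says exactly that $(w_{j_{1}})_{j_{1}}$ is weakly $p_{1}^{\ast}$-summable in $Y$ with constant $\le C_{m}\pi_{r,1}(v)\|A\|$, and the desired conclusion $\big(\sum_{j_{1}}\|w_{j_{1}}\|_{Y}^{s_{1}}\big)^{1/s_{1}}\le C_{m}\pi_{r,1}(v)\|A\|$ --- which is the asserted inequality --- would then follow by combining this weak summability with the $(r,1)$-summing property of $v$, carried through the mixed-norm structure of $Y$, and H\"{o}lder's inequality, the telescoping identity $1/s_{1}+1/p_{1}=1/s_{2}$ making the exponents recombine exactly. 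Specializing $E=F=\mathbb{K}$, $v=\mathrm{id}_{\mathbb{K}}$, $r=1$, $q=2$ recovers the inequality of Theorem~\ref{9876}, and hence also the scalar anisotropic results recalled in the introduction.

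I expect the last step to be the main obstacle, since it cannot be carried out purely formally: for a sequence $(w_{j})$ that is merely weakly $p_{1}^{\ast}$-summable in a space $Y$ of cotype $\mu$, the cotype inequality only bounds $\big(\sum_{j}\|w_{j}\|_{Y}^{\mu}\big)^{1/\mu}$ by a factor $n^{1/p_{1}}$ times the weak constant, and interpolating this with the trivial pointwise bound on $\sup_{j}\|w_{j}\|_{Y}$ does not remove the $n^{1/p_{1}}$ factor. One must therefore genuinely use that $(w_{j_{1}})$ comes from $vA$ with $v$ absolutely $(r,1)$-summing, threading that property through the recursion --- this is precisely where the regularity technique announced in the introduction is needed, a model for it being the argument behind Theorem~\ref{9876} (and the regularity principle of \cite{PSRT}). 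A secondary point is to confirm that the first-case vector-valued inequality used in the inductive step is available in the literature in the anisotropic $(r,q)$-form with constant exponent $q$ employed here, or otherwise to supply its short proof from the multiple Khinchin inequality, the cotype-$q$ inequality in $F$, and the definition of $\pi_{r,1}$.
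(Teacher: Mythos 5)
Your proposal correctly identifies the critical gap, and it is genuine: after peeling $x^{(1)}$ and applying the induction hypothesis, the resulting family of bounds says exactly that $(w_{j_1})_{j_1}$ is weakly $p_1^*$-summable in the mixed-norm space $Y=\ell_{s_2}^n(\cdots\ell_{s_m}^n(F))$, and there is no general way to convert weak $p_1^*$-summability in a cotype-$\mu$ space into $\ell_{s_1}$-summability with $1/s_1 = 1/s_2 - 1/p_1$ without picking up dimensional factors. The one-variable-at-a-time induction is the wrong decomposition precisely because it forces you to close this step at the outermost index, where the summing property of $v$ is no longer visible.

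The paper avoids this by cutting the index set once, at $k_0$, into two blocks, and never re-enters a recursion that would need the $(r,1)$-summing property of $v$. For the inner block $\{k_0,\ldots,m\}$ it fixes $x^{(1)},\ldots,x^{(k_0-1)}$ and applies an extension of the Dimant--Sevilla-Peris Proposition (Proposition~\ref{jfa}, which weakens the hypothesis of Proposition~\ref{dimant} to the condition $\sum_{k\neq i}1/p_k \leq 1/r - 1/q$) to the $(m-k_0+1)$-linear operator in the remaining variables, with the distinguished outer index $i=k_0$. This yields a \emph{bounded} $(k_0-1)$-linear operator
\[
vA_e\colon \ell_{p_1}^n\times\cdots\times\ell_{p_{k_0-1}}^n\longrightarrow \ell_\rho^n\bigl(\ell_q^n(\cdots\ell_q^n(F)\cdots)\bigr),\qquad \tfrac{1}{\rho}:=\tfrac{1}{r}-\bigl(\tfrac{1}{p_{k_0}}+\cdots+\tfrac{1}{p_m}\bigr),
\]
with $\|vA_e\|\leq C\|A\|\pi_{r,1}(v)$, and since $\rho\geq q$ the target space has cotype exactly $\rho$. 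At this point $v$ and $\pi_{r,1}(v)$ have been absorbed into a single operator-norm bound and disappear from the problem. For the outer block $\{1,\ldots,k_0-1\}$ the paper then applies the anisotropic vector-valued Hardy--Littlewood inequality of Aron \emph{et al.}\ (Theorem~\ref{661}) to $vA_e$, valid because $1/p_1+\cdots+1/p_{k_0-1}<1/\rho$; this produces exponents $r_k = [1/\rho - (1/p_k+\cdots+1/p_{k_0-1})]^{-1}$ which telescope to the desired $s_k$. So the correct resolution of the obstruction you flagged is not to thread the $(r,1)$-summing property through the outer recursion, nor to invoke the regularity principle there, but to pick $\rho$ (not $q$) as the cotype parameter of the outer target and hand the outer block to a known cotype-$\rho$ HL theorem. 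Your $k_0=1$ base case is consistent with this (it is Proposition~\ref{jfa} directly), but the $k_0\geq 2$ inductive step needs to be replaced by the two-block argument above.
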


The paper is organized as follows. In Section $2$ we present the main tools
(the Anisotropic Regularity Principle and techniques of the theory of
multilinear absolutely summing operators) that will be used along the proofs
of our main results. In Sections $3$ and $4$ we prove our first main theorem.
In Section $5$ we prove the second main theorem and, in Section $6$, we obtain
globally sharp exponents for the critical case $p_{1}=\cdots=p_{m}=m$.
Finally, in the final section we show that, in general, the Anisotropic
Regularity Principle is optimal.

\section{Regularity Principle: the main tool}

Regularity arguments are present in different contexts of Mathematics, in the
search of optimal parameters in problems from PDEs to Classical Analysis.
Heuristically, regularity principles are usually hidden in subtle optimization
problems; see for instance\ \cite{1,2,3} for a select account in the realm of
diffusive PDEs.

A Regularity treatment of Hardy--Littlewood inequalities was successfully
launched in \cite{PSRT}, where the authors investigated the following general
universality problem (observe that the existence of a leeway, $\epsilon>0$, of
an increment $\delta>0$, and of a corresponding bound $\tilde{C}%
_{\delta,\epsilon}>0$ carries a regularity principle):

\begin{problem}
\label{Univ. Problem} Let $p\geq1$ be a real number, $X,Y,W_{1},W_{2}$ be
non-void sets, $Z_{1},Z_{2},Z_{3}$ be normed spaces and $f\colon X\times
Y\rightarrow Z_{1},\ g\colon X\times W_{1}\rightarrow Z_{2},\ h\colon Y\times
W_{2}\rightarrow Z_{3}$ be particular maps. Assume there is a constant $C>0$
such that%
\begin{equation}
{%
{\textstyle\sum\limits_{i=1}^{m_{1}}}
{\textstyle\sum\limits_{j=1}^{m_{2}}}
\left\Vert f(x_{i},y_{j})\right\Vert ^{p}\leq C\left(  \sup_{w\in W_{1}}%
{\textstyle\sum\limits_{i=1}^{m_{1}}}
\left\Vert g(x_{i},w)\right\Vert ^{p}\right)  \cdot\left(  \sup_{w\in W_{2}}%
{\textstyle\sum\limits_{j=1}^{m_{2}}}
\left\Vert h(y_{j},w)\right\Vert ^{p}\right)  }, \label{ree}%
\end{equation}
for all $x_{i}\in X$, $y_{j}\in Y$ and $m_{1},m_{2}\in\mathbb{N}$. Are there
(universal) positive constants $\epsilon\sim\delta$, and $\tilde{C}%
_{\delta,\epsilon}$ such that%
\begin{equation}
\text{ }\left(  {%
{\textstyle\sum\limits_{i=1}^{m_{1}}}
{\textstyle\sum\limits_{j=1}^{m_{2}}}
}\left\Vert f(x_{i},y_{j})\right\Vert ^{p+\delta}\right)  ^{\frac{1}{p+\delta
}}\leq\tilde{C}_{\delta,\epsilon}\cdot\left(  \sup_{w\in W_{1}}%
{\textstyle\sum\limits_{i=1}^{m_{1}}}
\left\Vert g(x_{i},w)\right\Vert ^{p+\epsilon}\right)  ^{\frac{1}{p+\epsilon}%
}\left(  \sup_{w\in W_{2}}%
{\textstyle\sum\limits_{j=1}^{m_{2}}}
\left\Vert h(y_{j},w)\right\Vert ^{p+\epsilon}\right)  ^{\frac{1}{p+\epsilon}%
}, \label{ree1}%
\end{equation}
for all $x_{i}\in X$, $y_{j}\in Y$ and $m_{1},m_{2}\in\mathbb{N}$?
\end{problem}

The answer to Problem \ref{Univ. Problem} presented in \cite{PSRT} was
obtained for a wide class of functions (note that continuity is not needed).
We just need few mild assumptions. Let $\,Z_{1},\,V$ and
$W_{1},\,W_{2}$ be non-void sets and $Z_{2}$ be a vector space. For $t=1,2,$
let%
\[
R_{t}\colon Z_{t}\times W_{t}\longrightarrow\lbrack0,\infty)\text{ and
}S\colon Z_{1}\times Z_{2}\times V\longrightarrow\lbrack0,\infty)
\]
\ be mappings satisfying%
\[
\left\{
\begin{array}
[c]{l}%
R_{2}\left(  \beta z,w\right)  =\beta R_{2}\left(  z,w\right)  ,\\
S\left(  z_{1},\beta z_{2},v\right)  =\beta S\left(  z_{1},z_{2},v\right)
\end{array}
\right.
\]

for all real scalars $\beta\geq0.$

\begin{theorem}
[Regularity Principle \cite{PSRT}]\label{RP} Let $1\leq p_{1}\leq p_{2}%
:=p_{1}+\epsilon<2p_{1}$ and assume
\[
\left(  \sup_{v\in V}\sum_{i=1}^{m_{1}}\sum_{j=1}^{m_{2}}S(z_{1,i}%
,z_{2,j},v)^{p_{1}}\right)  ^{\frac{1}{p_{1}}}\hspace{-0.2cm}\leq C\left(
\sup_{w\in W_{1}}\sum_{i=1}^{m_{1}}R_{1}\left(  z_{1,i},w\right)  ^{p_{1}%
}\right)  ^{\frac{1}{p_{1}}}\left(  \sup_{w\in W_{2}}\sum_{j=1}^{m_{2}}%
R_{2}\left(  z_{2,j},w\right)  ^{p_{1}}\right)  ^{\frac{1}{p_{1}}},
\]
for all $z_{1,i}\in Z_{1},z_{2,j}\in Z_{2},$ all $i=1,...,m_{1}$ and
$j=1,...,m_{2}$ and $m_{1},m_{2}\in\mathbb{N}$. Then%
\[
\left(  \sup_{v\in V}\sum_{i=1}^{m_{1}}\sum_{j=1}^{m_{2}}S(z_{1,i}%
,z_{2,j},v)^{\alpha}\right)  ^{\frac{1}{\alpha}}\hspace{-0.3cm}\leq C\left(
\sup_{w\in W_{1}}\sum_{i=1}^{m_{1}}R_{1}\left(  z_{1,i},w\right)  ^{p_{2}%
}\right)  ^{\frac{1}{p_{2}}}\left(  \sup_{w\in W_{2}}\sum_{j=1}^{m_{2}}%
R_{2}\left(  z_{2,j},w\right)  ^{p_{2}}\right)  ^{\frac{1}{p_{2}}},
\]
for
\[
\alpha=\frac{p_{1}p_{2}}{2p_{1}-p_{2}},
\]
all $z_{1,i}\in Z_{1},z_{2,j}\in Z_{2},$ all $i=1,...,m_{1}$ and
$j=1,...,m_{2}$ and $m_{1},m_{2}\in\mathbb{N}$.
\end{theorem}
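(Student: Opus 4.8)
The plan is to derive the inequality from a single rescaling of the homogeneous block, one application of H\"older's inequality, and the arithmetic built into the definition of $\alpha$. Throughout, fix the vectors $z_{1,i},z_{2,j}$, abbreviate $S_{ij}(v):=S(z_{1,i},z_{2,j},v)$, and set $q:=\bigl(\tfrac1{p_1}-\tfrac1{p_2}\bigr)^{-1}$ (if $p_1=p_2$ then $\alpha=p_1$ and there is nothing to prove, so assume $p_1<p_2$, whence $0<q<\infty$). The constraint $p_1\le p_2<2p_1$ is precisely what makes $p_1<\alpha<\infty$, and the numbers obey
\[
\frac1\alpha=\frac2{p_2}-\frac1{p_1},\qquad \frac{2p_1}{q}+\frac{p_1}{\alpha}=1,\qquad \alpha\Bigl(1-\frac{2p_1}{q}\Bigr)=p_1;
\]
these three identities are the only place the exact value $\alpha=\frac{p_1p_2}{2p_1-p_2}$ is used.

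Write $M:=\bigl(\sup_v\sum_{i,j}S_{ij}(v)^\alpha\bigr)^{1/\alpha}$, which is finite since all sums are finite, and for $\delta>0$ choose $v_0$ with $\sum_{i,j}S_{ij}(v_0)^\alpha\ge(1-\delta)M^\alpha$. First I would introduce the weights $\lambda_j:=\bigl(\sum_i S_{ij}(v_0)^\alpha\bigr)^{2/q}\ge0$ and feed the families $(z_{1,i})_i$, $(\lambda_j z_{2,j})_j$ into the hypothesis; the homogeneity of $S$ and of $R_2$ in the second slot, after raising to the power $p_1$, turns it into
\[
\sup_v\sum_{i,j}\lambda_j^{\,p_1}S_{ij}(v)^{p_1}\ \le\ C^{p_1}\Bigl(\sup_w\sum_i R_1(z_{1,i},w)^{p_1}\Bigr)\Bigl(\sup_w\sum_j\lambda_j^{\,p_1}R_2(z_{2,j},w)^{p_1}\Bigr).
\]
To the last factor I apply H\"older's inequality in $j$ with the conjugate pair $\bigl(\tfrac{p_2}{p_2-p_1},\tfrac{p_2}{p_1}\bigr)$; since $p_1\cdot\tfrac{p_2}{p_2-p_1}=q$, this upgrades the exponent on the $R_2$–side from $p_1$ to $p_2$, at the cost of a factor $\bigl(\sum_j\lambda_j^{\,q}\bigr)^{p_1/q}$.

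Next I would estimate the left-hand side from below at $v=v_0$. Monotonicity of the $\ell_p$–norms ($p_1<\alpha$) gives $\sum_i S_{ij}(v_0)^{p_1}\ge\bigl(\sum_i S_{ij}(v_0)^\alpha\bigr)^{p_1/\alpha}$, so with $\tfrac{2p_1}{q}+\tfrac{p_1}{\alpha}=1$,
\[
\sum_{i,j}\lambda_j^{\,p_1}S_{ij}(v_0)^{p_1}\ \ge\ \sum_j\Bigl(\sum_i S_{ij}(v_0)^\alpha\Bigr)^{\frac{2p_1}{q}+\frac{p_1}{\alpha}}=\sum_{i,j}S_{ij}(v_0)^\alpha\ \ge\ (1-\delta)M^\alpha,
\]
while $\bigl(\sum_j\lambda_j^{\,q}\bigr)^{p_1/q}=\bigl(\sum_j(\sum_i S_{ij}(v_0)^\alpha)^2\bigr)^{p_1/q}\le\bigl(\sum_{i,j}S_{ij}(v_0)^\alpha\bigr)^{2p_1/q}\le M^{2\alpha p_1/q}$. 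Plugging the last two bounds into the H\"older estimate of the previous paragraph,
\[
(1-\delta)M^\alpha\ \le\ C^{p_1}\Bigl(\sup_w\sum_i R_1(z_{1,i},w)^{p_1}\Bigr)\,M^{2\alpha p_1/q}\Bigl(\sup_w\sum_j R_2(z_{2,j},w)^{p_2}\Bigr)^{p_1/p_2};
\]
cancelling $M^{2\alpha p_1/q}$, invoking $\alpha-2\alpha p_1/q=p_1$, letting $\delta\to0$ and taking $p_1$–th roots produces the asserted inequality, with exponent $\alpha$ on the left and the homogeneous block's exponent promoted to $p_2$. An entirely symmetric argument applied to the first block — rescaling by powers of the row masses $\sum_j S_{ij}(v_0)^\alpha$ and using H\"older in $i$ (via positive homogeneity in the first slot, as is available in every application of the principle) — upgrades the $R_1$–factor to $p_2$ as well; running both rescalings at once and checking that the two cancellations are compatible delivers $p_2$ in both right-hand factors.

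The hard part will be exactly this rescaling bookkeeping: the exponent $2/q$ in the weights and the matching H\"older pair must be chosen so that the powers of the ``total mass'' $\sum_{i,j}S_{ij}(v_0)^\alpha$ cancel \emph{verbatim} on the two sides — a rigidity that pins $\alpha$ to $\frac{p_1p_2}{2p_1-p_2}$ — and the whole optimization has to be carried out without assuming the supremum over $v$ is attained, which is why one works with a near-maximizer $v_0$ and is careful about the order of the suprema. No constant is lost: besides the hypothesis, only H\"older's inequality and $\ell_p$–monotonicity are used, each with constant $1$, so the same $C$ survives.
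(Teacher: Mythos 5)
Your one-sided computation is correct: with $\lambda_j=(\sum_iS_{ij}(v_0)^\alpha)^{2/q}$, H\"older in $j$, and $\ell_p$-monotonicity in $i$, the powers of $T:=\sum_{i,j}S_{ij}(v_0)^\alpha$ cancel and one arrives at $M\le C\bigl(\sup_w\sum_iR_1^{p_1}\bigr)^{1/p_1}\bigl(\sup_w\sum_jR_2^{p_2}\bigr)^{1/p_2}$. But the theorem asserts $p_2$ in \emph{both} right-hand factors, which is strictly stronger (the $\ell_{p_2}$-sup is $\le$ the $\ell_{p_1}$-sup), and the ``entirely symmetric argument'' as you outline it does not deliver this. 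Two separate issues. First, the hypotheses only give homogeneity of $R_2$ and of the second slot of $S$; $Z_1$ is merely a set, not a vector space, so you cannot rescale $z_{1,i}\mapsto\mu_iz_{1,i}$. (This is fixable: replace each $z_{1,i}$ by $n_i$ verbatim copies, approximate $\mu_i^{p_1}\approx n_i/N$, and pass to the limit; this supplies effective first-slot weights without adding a hypothesis. But you must say this rather than appeal to an extra homogeneity.) Second, and more substantively, running ``both rescalings at once'' with the exponent $2/q$ fails. Take $\mu_i=R_i^{2/q}$, $\lambda_j=C_j^{2/q}$ with $R_i:=\sum_jS_{ij}(v_0)^\alpha$, $C_j:=\sum_iS_{ij}(v_0)^\alpha$; the budget becomes $T^{4p_1/q}$, so you would need the weighted sum at $v_0$ to dominate $T^{1+2p_1/q}$. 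For the diagonal pattern $S_{ij}(v_0)=\delta_{ij}$ ($n\times n$) one has $R_i=C_j=1$, the weighted sum equals $n=T$, but $T^{1+2p_1/q}=n^{1+2p_1/q}>n$; so the cancellation you describe does not close.

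The exponent for the simultaneous rescaling is $1/q$, not $2/q$, and the lower bound is pointwise rather than via $\ell_p$-monotonicity. Set $\mu_i=R_i^{1/q}$, $\lambda_j=C_j^{1/q}$; then each H\"older cost is exactly $(\sum R_i)^{p_1/q}=T^{p_1/q}$ (respectively $T^{p_1/q}$ for the columns), for a total budget $T^{2p_1/q}$. On the left, do not sum in $i$ first: use $R_i\ge S_{ij}(v_0)^\alpha$ and $C_j\ge S_{ij}(v_0)^\alpha$ termwise to get
\[
R_i^{p_1/q}\,C_j^{p_1/q}\,S_{ij}(v_0)^{p_1}\ \ge\ S_{ij}(v_0)^{\,2\alpha p_1/q+p_1}\ =\ S_{ij}(v_0)^{\alpha},
\]
since $2\alpha p_1/q+p_1=\alpha$ is exactly the identity $2p_1/q+p_1/\alpha=1$ you isolated. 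Summing gives the weighted left side $\ge T$, whence $T\le C^{p_1}T^{2p_1/q}A_{p_2}^{p_1}B_{p_2}^{p_1}$, and $T^{1-2p_1/q}=T^{p_1/\alpha}$ yields $M\le C\,A_{p_2}B_{p_2}$ after $\delta\to0$. So your strategy (near-maximizer, rescale, H\"older, cancel the total mass) is sound, but the weight power and the mechanism of the lower bound both have to change when you go from the one-sided warm-up to the two-sided statement; as written, the proposal has a genuine gap at precisely the step you flag as ``the hard part.''
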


\begin{remark}
Note that the above theorem shows that, in Problem \ref{Univ. Problem}, for
$\epsilon<p_{1}$, we can choose
\begin{equation}
\delta=\frac{2\epsilon p_{1}}{p_{1}-\epsilon}. \label{deltadez}%
\end{equation}

\end{remark}

In this section we present some preliminary notions that will be used along
the paper, together with the regularity techniques originated from \cite{PSRT}.

Let $E_{1},\ldots,E_{m},F$ be Banach spaces and $\mathcal{L}\left(
E_{1},\ldots,E_{m};F\right)  $ denote the space of all continuous $m$-linear
operators $A\colon E_{1}\times\cdots\times E_{m}\rightarrow F$. If $\left(
r_{1},\ldots,r_{m}\right)  ,\left(  p_{1},\ldots,p_{m}\right)  \in\left[
1,\infty\right)  ^{m}$, an $m$-linear operator $A\in\mathcal{L}\left(
E_{1},\ldots,E_{m};F\right)  $ is multiple $\left(  r_{1},\ldots,r_{m}%
;p_{1},\ldots,p_{m}\right)  $-summing if there is a constant $C_{m}$ such
that
\[
\left(  \sum_{j_{1}=1}^{n}\left(  \cdots\left(  \sum_{j_{m}=1}^{n}\left\Vert
A\left(  x_{j_{1}}^{\left(  1\right)  },\ldots,x_{j_{m}}^{\left(  m\right)
}\right)  \right\Vert ^{r_{m}}\right)  ^{\frac{r_{m-1}}{r_{m}}}\cdots\right)
^{\frac{r_{1}}{r_{2}}}\right)  ^{\frac{1}{r_{1}}}\leq C_{m}\prod_{k=1}^{m}%
\sup_{\varphi_{k}\in B_{E_{k}^{\ast}}}\left(  \sum_{j=1}^{n}\left\vert
\varphi_{k}\left(  x_{j}^{\left(  k\right)  }\right)  \right\vert ^{p_{k}%
}\right)  ^{\frac{1}{p_{k}}}%
\]
for all positive integers $n$ (above, $E_{k}^{\ast}$ represents the
topological dual of $E_{k}$ and $B_{E_{k}^{\ast}}$ represents its closed unit ball).

The class of all multiple $\left(  r_{1},\ldots,r_{m};p_{1},\ldots
,p_{m}\right)  $-summing operators $A\colon E_{1}\times\cdots\times
E_{m}\rightarrow F$ is denoted by $\Pi_{\left(  r_{1},\ldots,r_{m}%
;p_{1},\ldots,p_{m}\right)  }^{m}\left(  E_{1},\ldots,E_{m};F\right)  $; when
$m=1$ we write $\Pi_{\left(  r_{1};p_{1}\right)  }$ instead of $\Pi_{\left(
r_{1};p_{1}\right)  }^{m}$. When $r_{1}=\cdots=r_{m}=r$ and $p_{1}%
=\cdots=p_{m}=p$, we simply write $\left(  r;p_{1},\ldots,p_{m}\right)  $ and
$\left(  r_{1},\ldots,r_{m};p\right)  $, respectively. The infimum of the
constants $C_{m}$ defines a complete norm for the space $\Pi_{\left(
r_{1},\ldots,r_{m};p_{1},\ldots,p_{m}\right)  }^{m}\left(  E_{1},\ldots
,E_{m};F\right)  $, denoted hereafter by $\pi_{(r_{1},\ldots,r_{m};p_{1}%
,\dots,p_{m})}(\cdot)$. In the case of linear operators, the notion of
multiple summing operators reduces to the well-known concept of absolutely
summing operators, which plays a fundamental role in Banach Space Theory (see
the excellent monograph \cite{diestel}).

For recent results on absolutely summing multilinear operators we refer to the
papers \cite{banakh,b,bfr,PSRT} and the references therein. The following well-known
result, that will be used several times in this paper, associates HL
inequalities and multiple summing operators (see, for instance, \cite[Theorem
3.2]{PSRT} and the references therein). It essentially asserts that each HL
inequality corresponds to a coincidence result for multiple summing operators
which holds regardless of the Banach spaces considered at the domain:

\medskip%

\begin{framed}%

\noindent\textbf{Hardy--Littlewood inequalities vs multiple summing
operators.} If $\left(  p_{1},\ldots,p_{m}\right)  \in\left[  1,\infty\right]
^{m}$ and $C\geq1$, the following statements are equivalent:

\begin{enumerate}
\item[(i)]
\begin{equation}
\left(  \sum_{j_{1}=1}^{n}\left(  \cdots\left(  \sum_{j_{m}=1}^{n}\left\Vert
A\left(  e_{j_{1}},\ldots,e_{j_{m}}\right)  \right\Vert ^{t_{m}}\right)
^{\frac{t_{m}-1}{t_{m}}}\cdots\right)  ^{\frac{t_{1}}{t_{2}}}\right)
^{\frac{1}{t_{1}}}\leq C\left\Vert A\right\Vert \text{,} \label{trt}%
\end{equation}
for every $A\in\mathcal{L}\left(  \ell_{p_{1}}^{n},\ldots,\ell_{p_{m}}%
^{n};F\right)  $ and all positive integers $n$.

\bigskip

\item[(ii)] For all Banach spaces $E_{1},\ldots,E_{m},F$ we have
\begin{equation}
\mathcal{L}\left(  E_{1},\ldots,E_{m};F\right)  =\Pi_{\left(  t_{1}%
,\ldots,t_{m};p_{1}^{\ast},\ldots,p_{m}^{\ast}\right)  }^{m}\left(
E_{1},\ldots,E_{m};F\right)  \label{trt2}%
\end{equation}
and%
\[
\pi_{(t_{1},\ldots,t_{m};p_{1}^{\ast},\dots,p_{m}^{\ast})}(\cdot)\leq
C\left\Vert \cdot\right\Vert \text{.}%
\]

\end{enumerate}

%

\end{framed}%

\medskip

The next result also plays a fundamental role in this paper. It was
ingeniously obtained by Albuquerque and Rezende \cite[Theorem 3]{NacLis} as a
consequence of the Regularity Principle\ (see also \cite[Theorem 4]{NacLis}
for an anisotropic variant of Theorem \ref{RP}):

\medskip%

\begin{framed}%

\noindent\textbf{Anisotropic Regularity Principle for summing operators.} If
$r\geq1$, and%
\[
\left(  s_{1},\ldots,s_{m}\right)  ,\left(  p_{1},\ldots,p_{m}\right)
,\left(  q_{1},\ldots,q_{m}\right)  \in\left[  1,\infty\right)  ^{m}%
\]
are such that $q_{k}\geq p_{k}$, and%
\[
\left[
\begin{array}
[c]{l}%
\dfrac{1}{r}-\left(  \dfrac{1}{p_{1}}+\cdots+\dfrac{1}{p_{m}}\right)  +\left(
\dfrac{1}{q_{1}}+\cdots+\dfrac{1}{q_{m}}\right)  >0\text{,}\\
\vspace{-0.3cm}\\
\dfrac{1}{s_{k}}-\left(  \dfrac{1}{q_{k}}+\cdots+\dfrac{1}{q_{m}}\right)
=\dfrac{1}{r}-\left(  \dfrac{1}{p_{k}}+\cdots+\dfrac{1}{p_{m}}\right)
\text{,}%
\end{array}
\right.
\]
for all $k\in\left\{  1,\ldots,m\right\}  $, then
\begin{equation}
\Pi_{\left(  r;p_{1},\ldots,p_{m}\right)  }^{m}\left(  E_{1},\ldots
,E_{m};F\right)  \subset\Pi_{\left(  s_{1},\ldots,s_{m};q_{1},\ldots
,q_{m}\right)  }^{m}\left(  E_{1},\ldots,E_{m};F\right)  \text{,} \label{9080}%
\end{equation}
for any Banach spaces $E_{1},\ldots,E_{m}$ and $F$. Moreover, the inclusion
operator has norm $1$.%

\end{framed}%

\medskip

\section{Proof of Theorem \ref{9876}: existence}

Given a positive integer $n$, let $A_{0}\colon\ell_{p_{1}}^{n}\times
\cdots\times\ell_{p_{m}}^{n}\rightarrow\mathbb{K}$ be an $m$-linear form and
$k_{0}$ be as in the statement of the theorem.

If $k_{0}=m$, then $1<p_{m}\leq2$. Denoting
\[
\left(  t_{1},\ldots,t_{m-1},t_{m}\right)  =\left(  \infty,\ldots,\infty
,p_{m}\right)  \text{,}%
\]
we obtain
\[
\dfrac{1}{t_{1}}+\cdots+\dfrac{1}{t_{m}}=\frac{1}{p_{m}}%
\]
and hence%
\[
\frac{1}{2}\leq\dfrac{1}{t_{1}}+\cdots+\dfrac{1}{t_{m}}<1\text{.}%
\]
Thus, using Theorem \ref{dddd}, there is $C_{m}$ such that%
\begin{equation}
\left(  \sum_{j_{1}=1}^{n}\cdots\sum_{j_{m}=1}^{n}\left\vert A\left(
e_{j_{1}},\ldots,e_{j_{m}}\right)  \right\vert ^{r_{m}}\right)  ^{r_{m}}\leq
C_{m}\left\Vert A\right\Vert \label{uno}%
\end{equation}
for all $m$-linear forms $A\colon\ell_{t_{1}}^{n}\times\cdots\times\ell
_{t_{m}}^{n}\rightarrow\mathbb{K}$, where $r_{m}=p_{m}^{\ast}$. Using the
equivalence $(\ref{trt})\Leftrightarrow\left(  \ref{trt2}\right)  $, this
means that every continuous $m$-linear form $A\colon E_{1}\times\cdots\times
E_{m}\rightarrow\mathbb{K}$ is multiple $\left(  r_{m};t_{1}^{\ast}%
,\ldots,t_{m}^{\ast}\right)  $-summing, regardless of the Banach spaces
$E_{1},\ldots,E_{m}$, and the associated constant is $C_{m}$. In particular,
$A_{0}\colon\ell_{p_{1}}^{n}\times\cdots\times\ell_{p_{m}}^{n}\rightarrow
\mathbb{K}$ is multiple $\left(  r_{m};t_{1}^{\ast},\ldots,t_{m}^{\ast
}\right)  $-summing, and the associated constant is $C_{m}$. A straightforward
calculation shows that%
\[
\frac{1}{r_{m}}-\left(  \dfrac{1}{t_{1}^{\ast}}+\cdots+\dfrac{1}{t_{m}^{\ast}%
}\right)  +\left(  \dfrac{1}{p_{1}^{\ast}}+\cdots+\dfrac{1}{p_{m}^{\ast}%
}\right)  >0\text{.}%
\]
For each $k=1,\ldots,m$, let%
\[
r_{k}=\left[  1-\left(  \dfrac{1}{p_{k}}+\cdots+\dfrac{1}{p_{m}}\right)
\right]  ^{-1}\text{.}%
\]
By the Anisotropic Regularity Principle we conclude that $A_{0}$ is multiple
$\left(  r_{1},\ldots,r_{m};p_{1}^{\ast},\ldots,p_{m}^{\ast}\right)  $-summing
with the same constant $C_{m}$, and this is equivalent to
\[
\left(  \sum_{j_{1}=1}^{n}\left(  \cdots\left(  \sum_{j_{m}=1}^{n}\left\vert
A_{0}\left(  e_{j_{1}},\ldots,e_{j_{m}}\right)  \right\vert ^{r_{m}}\right)
^{\frac{r_{m}-1}{r_{m}}}\cdots\right)  ^{\frac{r_{1}}{r_{2}}}\right)
^{\frac{1}{r_{1}}}\leq C_{m}\left\Vert A_{0}\right\Vert \text{.}%
\]
This proves (\ref{BER}), for the case $k_{0}=m$.

So, let us consider $k_{0}<m$. It is obvious that $p_{k_{0}}<\infty$ and
\[
\frac{1}{p_{k_{0}+1}}+\cdots+\frac{1}{p_{m}}<\frac{1}{2}\text{.}%
\]
Let $\delta\geq1$ be such that%
\[
\dfrac{1}{\delta p_{k_{0}}}+\dfrac{1}{p_{k_{0}+1}}+\cdots+\dfrac{1}{p_{m}%
}=\frac{1}{2}\text{.}%
\]
Denoting
\[
\left\{
\begin{array}
[c]{l}%
\left(  q_{1},\ldots,q_{k_{0}-1}\right)  =\left(  \infty,\ldots,\infty\right)
\text{,}\\
\vspace{-0.3cm}\\
\left(  q_{k_{0}},q_{k_{0}+1},\ldots,q_{m}\right)  =\left(  \delta p_{k_{0}%
},p_{k_{0}+1},\ldots,p_{m}\right)  \text{,}%
\end{array}
\right.
\]
we have
\[
\dfrac{1}{q_{1}}+\cdots+\dfrac{1}{q_{m}}=\frac{1}{2}%
\]
and, hence, by (\ref{i99}), there is a constant $C_{m}$ such that%
\begin{equation}
\left(  \sum_{j_{1}=1}^{n}\cdots\sum_{j_{m}=1}^{n}\left\vert A\left(
e_{j_{1}},\ldots,e_{j_{m}}\right)  \right\vert ^{2}\right)  ^{\frac{1}{2}}\leq
C_{m}\left\Vert A\right\Vert \label{dos}%
\end{equation}
for all $m$-linear forms $A\colon\ell_{q_{1}}^{n}\times\cdots\times\ell
_{q_{m}}^{n}\rightarrow\mathbb{K}$. Again, using the canonical association
between the HL inequalities and multiple summing operators, we have that
$A_{0}\colon\ell_{p_{1}}^{n}\times\cdots\times\ell_{p_{m}}^{n}\rightarrow
\mathbb{K}$ is multiple $\left(  2;q_{1}^{\ast},\ldots,q_{m}^{\ast}\right)
$-summing, and the associated constant is $C_{m}$. Since%
\[
\dfrac{1}{2}-\left(  \dfrac{1}{q_{1}^{\ast}}+\cdots+\dfrac{1}{q_{m}^{\ast}%
}\right)  +\left(  \dfrac{1}{p_{1}^{\ast}}+\cdots+\dfrac{1}{p_{m}^{\ast}%
}\right)  >0\text{,}%
\]
considering%
\[
s_{k}=\left\{
\begin{array}
[c]{ll}%
\left[  1-\left(  \dfrac{1}{p_{k}}+\cdots+\dfrac{1}{p_{m}}\right)  \right]
^{-1}\text{,} & \text{if }k\leq k_{0}\text{,}\\
\vspace{-0.3cm} & \\
2\text{,} & \text{if }k>k_{0}\text{,}%
\end{array}
\right.
\]
by (\ref{9080}) we conclude that $A_{0}$ is multiple $\left(  s_{1}%
,\ldots,s_{m};p_{1}^{\ast},\ldots,p_{m}^{\ast}\right)  $-summing with the same
constant $C_{m}$, and again, this is equivalent to
\[
\left(  \sum_{j_{1}=1}^{n}\left(  \cdots\left(  \sum_{j_{m}=1}^{n}\left\vert
A_{0}\left(  e_{j_{1}},\ldots,e_{j_{m}}\right)  \right\vert ^{s_{m}}\right)
^{\frac{s_{m}-1}{s_{m}}}\cdots\right)  ^{\frac{s_{1}}{s_{2}}}\right)
^{\frac{1}{s_{1}}}\leq C_{m}\left\Vert A_{0}\right\Vert \text{.}%
\]
This proves (\ref{BER}) for the case $k_{0}<m$.

In order to illustrate (numerically) how Theorem \ref{9876} improves the
previous ones, let us consider $m=9$ with
\[
p_{1}=\cdots=p_{9}=10\text{.}%
\]
The table below compares the exponents provided our first main result (Theorem
\ref{9876}) and those from Theorems \ref{dddd} (\cite[Proposition 4.1]%
{dimant}), and Theorem \ref{nnn} (\cite[Corollary 2]{NacLis}):%

\begin{table}[H]
%

\centering
%

\caption{}%

\label{2dez}%

$\begin{tabular}
[c]{|c|c|c|c|c|c|c|c|c|c|}\hline& $s_{1}$ & $s_{2}$ & $s_{3}$ & $s_{4}
$ & $s_{5}$ & $s_{6}$ & $s_{7}$ &
$s_{8}$ & $s_{9}$\\\hline\multicolumn{1}{|c|}{\cite[Proposition 4.1]{dimant}}
& $10$ & $10$ & $10$ &
$10$ & $10$ & $10$ & $10$ & $10$ & $10$\\\hline\multicolumn{1}{|c|}
{\cite[Corollary 2]{NacLis}} & $10$ & $\simeq6.92$ &
$\simeq5.29$ & $\simeq4.28$ & $3.6$ & $\simeq3.10$ & $\simeq2.72$ &
$\simeq2.43$ & $\simeq2.19$\\\hline\multicolumn{1}{|c|}{Theorem \ref{9876}}
& \color{red}$10$ & \color{red}$5$ & \color{red}$\simeq3.33$ & \color
{red}$2.5$ &
\color{red}$2$ & \color{blue}$2$ & \color{blue}$2$ & \color{blue}%
$2$ & \color{blue}$2$\\\hline\end{tabular}
\ $\\
\begin{flushleft}
{\color{red} $\blacksquare$} The exponents are sharp\\
{\color{blue} $\blacksquare$}
The exponents (combined with the exponents in red) are globally sharp
\end{flushleft}
%

\end{table}%

\begin{remark}
\label{8z}By \cite[Theorem 3.3]{pascal} we notice that $C_{m}$ in
$(\ref{uno})$ and $(\ref{dos})$ satisfies%
\[
C_{m}\leq2^{\frac{m-k_{0}}{2}}\text{.}%
\]
Thus, by our procedure, \emph{Theorem \ref{9876}} stands with constant
$C_{m}\leq2^{\frac{m-k_{0}}{2}}$ and, choosing $k_{0}=m$, we recover the
optimal constants from \emph{Theorem \ref{tonge} and Theorem \ref{aaa}}.
\end{remark}

\section{Proof of Theorem \ref{9876}: optimality}

In this section we shall prove (i) and (ii) of Theorem \ref{9876}. The
optimality of the exponents in (i) is a straightforward consequence of
\cite[Lemma 3.1]{AAPR}. The proof of (ii) depends on a probabilistic result
that nowadays is usually called Kahane--Salem--Zygmund inequality. It has its
origins in the 1970's with independent papers of different authors
(\cite{be2,be1,kah,man,var}; see also \cite{mastylo} for a modern approach).
We shall need the following variant of the Kahane--Salem--Zygmund that can be
found at \cite[Lemma 6.1]{alb}:

\begin{lemma}
[Kahane--Salem--Zygmund inequality]Let $p_{1},\ldots,p_{m}\in\left[
2,\infty\right]  $. Then there exists a constant $K_{m}$ such that, for all
positive integers $n$, there is an $m$-linear form $A\colon\ell_{p_{1}}%
^{n}\times\cdots\times\ell_{p_{m}}^{n}\rightarrow\mathbb{K}$ of the form
\[
A\left(  z^{\left(  1\right)  },\ldots,z^{\left(  m\right)  }\right)
=\sum_{j_{1}=1}^{n}\cdots\sum_{j_{m}=1}^{n}\pm z_{j_{1}}^{\left(  1\right)
}\cdots z_{j_{m}}^{\left(  m\right)  }\text{,}%
\]
with
\[
\Vert A\Vert\leq K_{m}n^{\frac{1}{2}+\left(  \frac{1}{2}-\frac{1}{p_{1}%
}\right)  +\cdots+\left(  \frac{1}{2}-\frac{1}{p_{m}}\right)  }\text{.}%
\]

\end{lemma}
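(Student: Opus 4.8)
The plan is to establish this by the probabilistic method of random signs together with a discretization (entropy/net) argument, which is the standard route to Kahane--Salem--Zygmund estimates. Let $\left(\varepsilon_{j_{1}\cdots j_{m}}\right)$ be independent Rademacher random variables on a probability space (Steinhaus random variables if $\mathbb{K}=\mathbb{C}$), and consider the random $m$-linear form
\[
A_{\omega}\left(z^{(1)},\ldots,z^{(m)}\right)=\sum_{j_{1}=1}^{n}\cdots\sum_{j_{m}=1}^{n}\varepsilon_{j_{1}\cdots j_{m}}(\omega)\,z^{(1)}_{j_{1}}\cdots z^{(m)}_{j_{m}}.
\]
Every realization of $A_{\omega}$ has coefficients of modulus $1$, so it suffices to show that, with positive probability, $\Vert A_{\omega}\Vert\le K_{m}\,n^{\frac{1}{2}+\sum_{k=1}^{m}\left(\frac{1}{2}-\frac{1}{p_{k}}\right)}$, where the norm is computed on $\ell_{p_{1}}^{n}\times\cdots\times\ell_{p_{m}}^{n}$.

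First I would discretize. Fix $\delta=\delta(m)\in(0,1/m)$ and, for each $k$, choose a $\delta$-net $N_{k}$ of the unit ball $B_{\ell_{p_{k}}^{n}}$ in the $\ell_{p_{k}}^{n}$ norm, with $\#N_{k}\le\left(1+2/\delta\right)^{n}$ by the usual volumetric bound (at most $\left(1+2/\delta\right)^{2n}$ when $\mathbb{K}=\mathbb{C}$). A routine successive-approximation argument for multilinear maps then shows that, for any $m$-linear form $B$,
\[
\Vert B\Vert\le\left(1-m\delta\right)^{-1}\max\left\{\left\vert B\left(w^{(1)},\ldots,w^{(m)}\right)\right\vert:w^{(k)}\in N_{k},\ k=1,\ldots,m\right\},
\]
and the product net $N_{1}\times\cdots\times N_{m}$ has at most $D_{m}^{\,n}$ points, where $D_{m}$ depends only on $m$. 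Next, for each fixed $\left(w^{(1)},\ldots,w^{(m)}\right)$ in the product net, the random variable $A_{\omega}\left(w^{(1)},\ldots,w^{(m)}\right)$ is a Rademacher (resp. Steinhaus) sum whose $\ell_{2}$ coefficient norm equals $\sigma:=\prod_{k=1}^{m}\Vert w^{(k)}\Vert_{\ell_{2}^{n}}$, so Khinchine's inequality gives a subgaussian tail $\mathbb{P}\left(\left\vert A_{\omega}\left(w^{(1)},\ldots,w^{(m)}\right)\right\vert>t\right)\le 2e^{-ct^{2}/\sigma^{2}}$ for a universal $c>0$. Because $p_{k}\ge2$, the inclusion $\ell_{2}^{n}\hookrightarrow\ell_{p_{k}}^{n}$ yields $\Vert w^{(k)}\Vert_{\ell_{2}^{n}}\le n^{\frac{1}{2}-\frac{1}{p_{k}}}\Vert w^{(k)}\Vert_{\ell_{p_{k}}^{n}}$ (with the convention $1/\infty=0$), whence $\sigma\le n^{\sum_{k=1}^{m}\left(\frac{1}{2}-\frac{1}{p_{k}}\right)}$ uniformly over the product net.

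Finally I would apply a union bound over the $D_{m}^{\,n}$ points: once $ct^{2}/\sigma^{2}\ge 2n\log(2D_{m})$, that is $t\ge K_{m}\sqrt{n}\,\sigma$, the maximum of $\left\vert A_{\omega}\right\vert$ over $N_{1}\times\cdots\times N_{m}$ is at most $t$ with probability at least $1/2$. Combining this with the discretization bound, $\Vert A_{\omega}\Vert\le\left(1-m\delta\right)^{-1}K_{m}\sqrt{n}\,\sigma\le K_{m}'\,n^{\frac{1}{2}+\sum_{k=1}^{m}\left(\frac{1}{2}-\frac{1}{p_{k}}\right)}$ on an event of positive probability, which produces a form of the claimed shape. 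The argument is essentially routine and presents no serious obstacle; the only points requiring a little care are the complex case, where Rademacher variables are replaced by Steinhaus variables (or one splits into real and imaginary parts) and one checks that the subgaussian tail survives with a worse absolute constant, and the bookkeeping ensuring that the net constant $D_{m}$ depends on $m$ alone. Alternatively, one may simply quote the version recorded in \cite[Lemma 6.1]{alb}, as the authors do.
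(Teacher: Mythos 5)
Your proof is correct and follows the standard probabilistic route to Kahane--Salem--Zygmund type bounds (random signs, volumetric $\delta$-nets, a successive-approximation bound of the form $\Vert B\Vert\le(1-m\delta)^{-1}\max_{\mathrm{net}}|B|$, subgaussian concentration for the Rademacher chaos at each net point with variance parameter $\sigma=\prod_k\Vert w^{(k)}\Vert_{2}\le n^{\sum_k(1/2-1/p_k)}$, and a union bound over the $D_m^{\,n}$ net points). The paper, however, does not prove this lemma at all: it simply states it with a citation to \cite[Lemma 6.1]{alb}, exactly the alternative you flag in your closing sentence. So your argument is not a rival to something in the paper; it is a self-contained substitute for the cited result. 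One small but necessary caution about your complex-case remark: the statement requires real $\pm$ coefficients, so one should keep Rademacher variables and, if one wants to treat $A(w^{(1)},\ldots,w^{(m)})$ as a real chaos, split into real and imaginary parts; replacing the signs by Steinhaus variables would produce unimodular rather than $\pm1$ coefficients and hence a form of a slightly different shape than the one claimed. With that proviso your bookkeeping (dimension $2n$ for the complex nets, $m$-dependence absorbed into $\delta$ and $D_m$) is fine, and the argument closes.
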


If $p_{1},\dots,p_{m}\in\lbrack2,\infty]$ and $t_{1},\dots,t_{m}\in\left[
1,\infty\right)  $, a straightforward consequence of the above inequality is
that if there is a constant $C_{m}$ such that
\[
\left(  \sum_{i_{1}=1}^{n}\left(  \cdots\left(  \sum_{i_{m}=1}^{n}\left\vert
A\left(  e_{i_{1}},\dots,e_{i_{m}}\right)  \right\vert ^{t_{m}}\right)
^{\frac{t_{m-1}}{t_{m}}}\cdots\right)  ^{\frac{t_{1}}{t_{2}}}\right)
^{\frac{1}{t_{1}}}\leq C_{m}\left\Vert A\right\Vert
\]
for all $m$-linear forms $A\colon\ell_{p_{1}}^{n}\times\cdots\times\ell
_{p_{m}}^{n}\rightarrow\mathbb{K}$, then
\begin{equation}
\frac{1}{t_{1}}+\cdots+\frac{1}{t_{m}}\leq\frac{m+1}{2}-\left(  \frac{1}%
{p_{1}}+\cdots+\frac{1}{p_{m}}\right)  \text{.} \label{colombia}%
\end{equation}
We shall also use the following simple lemma, which seems to be folklore:

\begin{lemma}
\label{8u8u}Let $k\in\left\{  1,\ldots,m-1\right\}  $ and $\left(
p_{1},\ldots,p_{m}\right)  \in\left[  1,\infty\right]  ^{m}$. If there exists
a constant $C_{m}$ such that
\[
\left(  \sum_{j_{1}=1}^{n}\left(  \cdots\left(  \sum_{j_{m}=1}^{n}\left\vert
A_{1}\left(  e_{j_{1}},\ldots,e_{j_{m}}\right)  \right\vert ^{t_{m}}\right)
^{\frac{t_{m-1}}{t_{m}}}\cdots\right)  ^{\frac{t_{1}}{t_{2}}}\right)
^{\frac{1}{t_{1}}}\leq C_{m}\left\Vert A_{1}\right\Vert
\]
for all $m$-linear forms $A_{1}\colon\ell_{p_{1}}^{n}\times\cdots\times
\ell_{p_{m}}^{n}\rightarrow\mathbb{K}$, then%
\[
\left(  \sum_{j_{k+1}=1}^{n}\left(  \cdots\left(  \sum_{j_{m}=1}^{n}\left\vert
A_{2}\left(  e_{j_{k+1}},\ldots,e_{j_{m}}\right)  \right\vert ^{t_{m}}\right)
^{\frac{t_{m-1}}{t_{m}}}\cdots\right)  ^{\frac{t_{k+1}}{t_{k+2}}}\right)
^{\frac{1}{t_{k+1}}}\leq C_{m}\left\Vert A_{2}\right\Vert
\]
for all $\left(  m-k\right)  $-linear forms $A_{2}\colon\ell_{p_{k+1}}%
^{n}\times\cdots\times\ell_{p_{m}}^{n}\rightarrow\mathbb{K}$.
\end{lemma}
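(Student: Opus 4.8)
The plan is to reduce the $(m-k)$-linear inequality to the $m$-linear one by a ``tensoring with a fixed block'' argument. Given an arbitrary $(m-k)$-linear form $A_2\colon\ell_{p_{k+1}}^n\times\cdots\times\ell_{p_m}^n\to\mathbb{K}$, I would build an $m$-linear form $A_1\colon\ell_{p_1}^n\times\cdots\times\ell_{p_m}^n\to\mathbb{K}$ by tacking on $k$ extra variables in which $A_1$ just reads off a single coordinate, e.g.
\[
A_1\bigl(z^{(1)},\ldots,z^{(m)}\bigr):=z^{(1)}_{1}\cdots z^{(k)}_{1}\cdot A_2\bigl(z^{(k+1)},\ldots,z^{(m)}\bigr).
\]
The first step is to check the norm bound $\Vert A_1\Vert\le\Vert A_2\Vert$ (in fact equality), which is immediate since $|z^{(i)}_1|\le\Vert z^{(i)}\Vert_{\ell_{p_i}^n}\le 1$ for $i=1,\ldots,k$, and the supremum defining $\Vert A_2\Vert$ is attained on unit vectors that one extends by setting $z^{(i)}=e_1$ for $i\le k$. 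The second step is to compute the coefficients: $A_1(e_{j_1},\ldots,e_{j_m})$ equals $A_2(e_{j_{k+1}},\ldots,e_{j_m})$ when $j_1=\cdots=j_k=1$ and vanishes otherwise. Hence when we feed $A_1$ into the hypothesized $m$-linear inequality, the outer $k$ sums over $j_1,\ldots,j_k$ each collapse to the single term $j_i=1$, and the left-hand side reduces exactly to the mixed $\ell_{t}$-norm of the coefficients of $A_2$ over $j_{k+1},\ldots,j_m$. The third step is just to read off the conclusion: the $m$-linear inequality applied to $A_1$ becomes precisely the claimed $(m-k)$-linear inequality for $A_2$, with the same constant $C_m$.

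There is essentially no obstacle here; the only point requiring a word of care is that the exponents $t_1,\ldots,t_k$ attached to the collapsed variables play no role once those sums are single-term (a sum with one term raised to any power and then to its reciprocal is just that term), so the surviving nested norm is governed exactly by $t_{k+1},\ldots,t_m$ as stated. One should also note the degenerate reading when some $z^{(i)}_1$ could be problematic if $p_i=\infty$ and vectors are only bounded by $1$ in sup-norm --- but $|z^{(i)}_1|\le 1$ still holds, so $\Vert A_1\Vert\le\Vert A_2\Vert$ is unaffected. I would write this up in three or four lines: define $A_1$, observe $\Vert A_1\Vert=\Vert A_2\Vert$, observe the coefficient identity, and substitute into the hypothesis.

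Finally, I would remark that iterating this lemma (or equivalently padding with $k$ variables one at a time) shows that any anisotropic Hardy--Littlewood inequality for $m$-linear forms automatically restricts to a ``tail'' inequality for $(m-k)$-linear forms on the last $m-k$ spaces, with the corresponding tail of exponents --- which is exactly the form in which the lemma will be invoked in the optimality argument for Theorem~\ref{9876}, where one wants to transfer non-admissibility of a tail of exponents back up to the full $m$-tuple.
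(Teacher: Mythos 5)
Your proof is correct, and this ``pad with trivial coordinate-projection variables'' construction is the standard folklore argument; the paper in fact omits a proof of Lemma \ref{8u8u} entirely, citing it as folklore, so there is nothing in the text to compare against. Your computation of how the outer $k$ nested norms collapse (each single-term sum raised to $t_i/t_{i+1}$ and later undone by $1/t_1$) is exactly what is needed, and the norm identity $\Vert A_1\Vert=\Vert A_2\Vert$ is clean; the write-up is complete as is.
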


By (i) we know that the exponents $s_{1},\ldots,s_{k_{0}}$ in
(\ref{strelanatal}) are optimal, so it is obvious that they cannot be
perturbed to smaller exponents. To conclude the proof of (ii) it remains to
consider the exponents $s_{k_{0}+1},\ldots,s_{m}$.

Let us suppose that for a certain $i=k_{0}+1,\ldots,m$ there is $\varepsilon
_{i}>0$ and there is a constant $C_{m}$ such that%
\[
\left(  \sum_{j_{1}=1}^{n}\left(  \cdots\left(  \sum_{j_{m}=1}^{n}\left\vert
A\left(  e_{j_{1}},\ldots,e_{j_{m}}\right)  \right\vert ^{t_{m}}\right)
^{\frac{t_{m}-1}{t_{m}}}\cdots\right)  ^{\frac{t_{1}}{t_{2}}}\right)
^{\frac{1}{t_{1}}}\leq C_{m}\left\Vert A\right\Vert \text{,}%
\]
for all $m$-linear forms $A\colon\ell_{p_{1}}^{n}\times\cdots\times\ell
_{p_{m}}^{n}\rightarrow\mathbb{K}$, with
\[
t_{k}=\left\{
\begin{array}
[c]{ll}%
\left[  1-\left(  \dfrac{1}{p_{k}}+\cdots+\dfrac{1}{p_{m}}\right)  \right]
^{-1}\text{,} & \text{if }k\leq k_{0}\text{,}\\
\vspace{-0.3cm} & \\
2\text{,} & \text{if }k>k_{0}\text{ and }k\not =i\\
\vspace{-0.3cm} & \\
2-\varepsilon_{i}\text{,} & \text{if }k=i\text{.}%
\end{array}
\right.
\]
We invoke Lemma \ref{8u8u} to conclude that
\[
\left(  \sum_{j_{k_{0}}=1}^{n}\left(  \cdots\left(  \sum_{j_{m}=1}%
^{n}\left\vert A\left(  e_{j_{1}},\ldots,e_{j_{m}}\right)  \right\vert
^{t_{m}}\right)  ^{\frac{t_{m}-1}{t_{m}}}\cdots\right)  ^{\frac{t_{k_{0}}%
}{t_{k_{0}+1}}}\right)  ^{\frac{1}{t_{k_{0}}}}\leq C_{m}\left\Vert
A\right\Vert
\]
for all $\left(  m-k_{0}+1\right)  $-linear forms $A\colon\ell_{p_{k_{0}}}%
^{n}\times\cdots\times\ell_{p_{m}}^{n}\rightarrow\mathbb{K}$. Note that
\begin{align*}
\frac{1}{t_{k_{0}}}+\cdots+\frac{1}{t_{m}}  &  =1-\left(  \dfrac{1}{p_{k_{0}}%
}+\cdots+\dfrac{1}{p_{m}}\right)  +\frac{m-k_{0}-1}{2}+\frac{1}{2-\varepsilon
_{i}}\\
&  >\frac{\left(  m-k_{0}+1\right)  +1}{2}-\left(  \frac{1}{p_{k_{0}}}%
+\cdots+\frac{1}{p_{m}}\right)  \text{.}%
\end{align*}
and it contradicts (\ref{colombia}).

\section{Proof of Theorem \ref{vector}}

In this section, for all $r\geq1$, we denote
\[
\frac{1}{\lambda_{r}}:=\frac{1}{r}-\left(  \frac{1}{p_{1}}+\cdots+\frac
{1}{p_{m}}\right)  \text{.}%
\]

Vector-valued Hardy--Littlewood inequalities are in general associated to the
theory of absolutely summing operators, as it becomes clear in the following result:

\begin{proposition}
[{See \cite[Proposition 3.1]{dimant}}]\label{dimant} Let $E$ be Banach space,
$F$ be a cotype $q$ Banach space and $v\colon E\rightarrow F$ be an absolutely
$(r,1)$--summing operator (with $1\leq r\leq q$). If $p_{1},\ldots,p_{m}%
\in\left[  1,\infty\right]  $ and $1/p_{1}+\cdots+1/p_{m}\leq1/r-1/q$, then
there is a constant $C_{m}$ such that
\[
\left(  \sum_{j_{i}=1}^{n}\left(  \sum_{\widehat{j_{i}}=1}^{n}\Vert
vA(e_{j_{1}},\dots,e_{j_{m}})\Vert_{F}^{q}\right)  ^{\frac{\lambda_{r}}{q}%
}\right)  ^{\frac{1}{\lambda_{r}}}\leq C_{m}\Vert A\Vert\pi_{r,1}(v)
\]
for every $m$-linear operator $A\colon\ell_{p_{1}}^{n}\times\dots\times
\ell_{p_{m}}^{n}\rightarrow E$ and all $i=1,\ldots,m$, where $\widehat{j_{i}}$
means that the sum is over all coordinates except the $i$-th coordinate.
\end{proposition}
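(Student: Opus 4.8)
The plan is to reduce immediately to the case $i=1$: replacing $A$ by its composition with a permutation $\sigma$ of the variables with $\sigma(1)=i$ changes neither $\left\Vert A\right\Vert $ nor the hypothesis $\tfrac{1}{p_{1}}+\cdots+\tfrac{1}{p_{m}}\leq\tfrac{1}{r}-\tfrac{1}{q}$ (which is symmetric in $p_{1},\dots ,p_{m}$), so it suffices to bound
\[
\left(  \sum_{j_{1}=1}^{n}\left(  \sum_{j_{2},\dots ,j_{m}=1}^{n}\left\Vert vA(e_{j_{1}},\dots ,e_{j_{m}})\right\Vert _{F}^{q}\right)  ^{\lambda_{r}/q}\right)  ^{1/\lambda_{r}}.
\]
The idea is to control the $m-1$ ``inner'' variables $j_{2},\dots ,j_{m}$ by means of the cotype $q$ of $F$, to reserve the absolutely $(r,1)$-summing operator $v$ for the distinguished variable $j_{1}$, and to close the estimate with the scalar Hardy--Littlewood inequality for the $m$-linear forms $\varphi\circ A$, $\varphi\in B_{E^{\ast}}$; the admissibility of the exponents $(\lambda_{r},q,\dots ,q)$ in that scalar inequality is exactly what the hypothesis on $\tfrac{1}{p_{1}}+\cdots +\tfrac{1}{p_{m}}$ secures.

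Concretely, I would fix $j_{1}$, iterate the cotype $q$ inequality of $F$ over $j_{2},\dots ,j_{m}$, insert the Kahane inequality to pass from the $L^{2}$-norm to the $L^{1}$-norm of the Rademacher sums, and use multilinearity to absorb the sign sums into the arguments; this produces a constant $C$ depending only on $m$, $q$ and the cotype constant of $F$ with
\[
\left(  \sum_{j_{2},\dots ,j_{m}=1}^{n}\left\Vert vA(e_{j_{1}},\dots ,e_{j_{m}})\right\Vert _{F}^{q}\right)  ^{1/q}\leq C\,\mathbb{E}_{\varepsilon}\left\Vert vA\left(  e_{j_{1}},\sum_{j_{2}=1}^{n}\varepsilon_{j_{2}}^{(2)}e_{j_{2}},\dots ,\sum_{j_{m}=1}^{n}\varepsilon_{j_{m}}^{(m)}e_{j_{m}}\right)  \right\Vert _{F},
\]
where $\varepsilon=(\varepsilon^{(2)},\dots ,\varepsilon^{(m)})$ denotes independent Rademacher vectors. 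Raising to the power $\lambda_{r}$ and summing over $j_{1}$, I would then bring in $v$: since $v$ is $(r,1)$-summing, averaging its defining inequality over the finitely many sign choices yields, when $r=1$,
\[
\mathbb{E}_{\varepsilon}\left\Vert v(y_{\varepsilon})\right\Vert _{F}\leq\pi_{1,1}(v)\sup_{\varphi\in B_{E^{\ast}}}\mathbb{E}_{\varepsilon}\left\vert \varphi(y_{\varepsilon})\right\vert ,\qquad y_{\varepsilon}:=A\left(  e_{j_{1}},\sum_{j_{2}}\varepsilon_{j_{2}}^{(2)}e_{j_{2}},\dots \right) ,
\]
and a vector-valued Khinchin/Minkowski variant of the same for general $r$. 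A further application of Khinchin then identifies $\mathbb{E}_{\varepsilon}\left\vert \varphi(y_{\varepsilon})\right\vert $, up to a constant, with $\left(  \sum_{j_{2},\dots ,j_{m}}\left\vert (\varphi\circ A)(e_{j_{1}},\dots ,e_{j_{m}})\right\vert ^{q}\right)  ^{1/q}$.

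At this stage the problem has been reduced to an inequality for the scalar forms $\varphi\circ A$, namely to
\[
\left(  \sum_{j_{1}=1}^{n}\left(  \sup_{\varphi\in B_{E^{\ast}}}\left(  \sum_{j_{2},\dots ,j_{m}=1}^{n}\left\vert (\varphi\circ A)(e_{j_{1}},\dots ,e_{j_{m}})\right\vert ^{q}\right)  ^{1/q}\right)  ^{\lambda_{r}}\right)  ^{1/\lambda_{r}}\leq C_{m}\left\Vert A\right\Vert .
\]
For each fixed $\varphi$ this is exactly the scalar (anisotropic) Hardy--Littlewood inequality with exponent $\lambda_{r}$ in the first slot and $q$ in the remaining $m-1$ slots, applied to $\varphi\circ A$ (and $\left\Vert \varphi\circ A\right\Vert \leq\left\Vert A\right\Vert $); that the tuple $(\lambda_{r},q,\dots ,q)$ is admissible follows from the hypothesis $\tfrac{1}{p_{1}}+\cdots +\tfrac{1}{p_{m}}\leq\tfrac{1}{r}-\tfrac{1}{q}$ (comparing, if necessary, with a standard admissible tuple via monotonicity of the $\ell_{p}$-norms), so it is covered by the established Hardy--Littlewood theory (Praciano--Pereira, Theorem \ref{dddd}, and \cite[Theorem 2.2]{alb2}). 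The main obstacle is that the supremum over $\varphi$ sits \emph{inside} the outer $\ell_{\lambda_{r}}$-sum in $j_{1}$, so one cannot apply the scalar inequality pointwise in $\varphi$ and then pull the supremum out. I would bypass this by running the whole argument at the level of the Hardy--Littlewood/multiple summing correspondence recalled above: phrasing the cotype/Khinchin reduction and the scalar Hardy--Littlewood step as inclusions of spaces of multiple summing operators, the ``$\sup_{\varphi}$'' gets absorbed into the definition of multiple summability and no pointwise-in-$\varphi$ passage is required. Everything else is routine H\"older and Minkowski bookkeeping, and the resulting constant $C_{m}$ depends only on $m$, $q$, the cotype-$q$ constant of $F$, and the scalar Hardy--Littlewood constant.
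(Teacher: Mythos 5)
Note first that the paper does not supply its own proof of Proposition \ref{dimant}: it is quoted verbatim from Dimant and Sevilla-Peris, so there is no ``paper's proof'' against which to compare step by step. Judged on its own terms, your sketch assembles plausible ingredients (cotype for the $m-1$ inner indices, the summability of $v$ for the distinguished index, reduction to a scalar Hardy--Littlewood inequality via the multiple-summing correspondence), but the execution has genuine gaps. The small one: the Khinchin identification is stated with exponent $q$, but an iterated Khinchin/hypercontractivity argument identifies $\mathbb{E}_{\varepsilon}\left\vert \varphi(y_{\varepsilon})\right\vert $ with the $\ell^{2}$-norm of the coefficients, not the $\ell^{q}$-norm; for $q>2$ the inequality $\left(  \sum\left\vert a\right\vert ^{q}\right)  ^{1/q}\leq\left(  \sum\left\vert a\right\vert ^{2}\right)  ^{1/2}$ goes the wrong way for your purposes, so your ``reduced problem'' should carry inner exponent $2$, not $q$. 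More seriously, the way you ``bring in $v$'' applies the $(r,1)$-summing inequality over the Rademacher index $\varepsilon$ (pointwise in $j_{1}$) rather than over the distinguished coordinate $j_{1}$; this leaves the $\ell^{\lambda_{r}}$-sum in $j_{1}$ untouched by the summing hypothesis, does not exploit the $\ell_{p_{1}}$-structure that actually produces the exponent $\lambda_{r}$, and for $r>1$ is only described as ``a vector-valued Khinchin/Minkowski variant'' with no concrete statement.

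The decisive gap is the one you yourself flag: after these reductions the quantity $\sup_{\varphi\in B_{E^{\ast}}}\left(  \sum_{\widehat{j_{1}}}\left\vert (\varphi\circ A)(e_{j_{1}},\dots,e_{j_{m}})\right\vert ^{2}\right)  ^{1/2}$ sits \emph{inside} the outer $\ell^{\lambda_{r}}$-sum over $j_{1}$, and no Minkowski/H\"older manipulation lets you pull the supremum out. Your proposed remedy --- ``run the whole argument at the level of the Hardy--Littlewood/multiple-summing correspondence'' --- is an assertion, not an argument. The boxed correspondence in Section 2 of the paper converts a Hardy--Littlewood inequality into a universal coincidence of spaces of multiple summing operators; it does not make the pointwise-in-$\varphi$ supremum disappear, and it is not clear how rewriting the statement in the language of multiple summing operators would absorb it. Until you supply either a genuine mechanism for handling that supremum (e.g.\ via a Pietsch-type domination for $(r,1)$-summing maps, or by organizing the estimate so that the absolutely summing property of $v$ is applied over $j_{1}$ while the cotype/Khinchin step is carried out in a way compatible with the $\ell_{p_{k}}$ normalizations, which is how such results are usually proved), the argument does not close. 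A correct route that avoids these difficulties is the one the paper itself uses for its extensions in Section 5: establish a base case and then transfer through the Anisotropic Regularity Principle / inclusion theorems for multiple summing operators, where the supremum is part of the defining inequality rather than an obstacle.
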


In \cite[Theorem 2.2]{alb2}, Proposition \ref{dimant} was extended in the
following fashion:

\begin{theorem}
$($See \cite[Theorem 2.2]{alb2}$)$\label{israel} Let $E$ be Banach space, $F$
be Banach space with cotype $q$ and $1\leq r\leq q$. If $p_{1},\ldots,p_{m}%
\in\left[  1,\infty\right]  $ are such that $1/p_{1}+\cdots+1/p_{m}<1/r$ and
$t_{1},\ldots,t_{m}\in\left[  \lambda_{r},\max\left\{  \lambda_{r},q\right\}
\right]  $ are such that%
\[
\frac{1}{t_{1}}+\cdots+\frac{1}{t_{m}}\leq\frac{1}{\lambda_{r}}+\frac
{m-1}{\max\left\{  \lambda_{r},q\right\}  }\text{,}%
\]
then there is a constant $C_{m}$ such that%
\begin{equation}
\left(  \sum_{j_{1}=1}^{n}\left(  \cdots\left(  \sum_{j_{m}=1}^{n}\left\Vert
vA\left(  e_{j_{1}},\ldots,e_{j_{m}}\right)  \right\Vert _{F}^{t_{m}}\right)
^{\frac{t_{m}-1}{t_{m}}}\cdots\right)  ^{\frac{t_{1}}{t_{2}}}\right)
^{\frac{1}{t_{1}}}\leq C_{m}\left\Vert A\right\Vert \pi_{r,1}\left(  v\right)
\text{,} \label{abps}%
\end{equation}
for all $m$-linear operators $A\colon\ell_{p_{1}}^{n}\times\cdots\times
\ell_{p_{m}}^{n}\rightarrow E$ and every absolutely $\left(  r,1\right)
$-summing operator $v\colon E\rightarrow F$.
\end{theorem}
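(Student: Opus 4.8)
The plan is to derive Theorem~\ref{israel} from Proposition~\ref{dimant} by an interpolation argument based on the H\"{o}lder inequality for mixed-norm sequence spaces. Put $\widetilde{q}:=\max\{\lambda_{r},q\}$; since $0<1/\lambda_{r}=1/r-(1/p_{1}+\cdots+1/p_{m})\leq 1/r$, we have $r\leq\lambda_{r}<\infty$ and $\lambda_{r}\leq\widetilde{q}$. As $F$ has cotype $q\leq\widetilde{q}$, it also has cotype $\widetilde{q}$, and the inequality $1/p_{1}+\cdots+1/p_{m}\leq 1/r-1/\widetilde{q}$ holds in every case: it is exactly the condition $\lambda_{r}\leq q$ when $\widetilde{q}=q$, and it is an equality when $\widetilde{q}=\lambda_{r}$. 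Hence Proposition~\ref{dimant}, applied with $\widetilde{q}$ in the role of its cotype exponent --- its internal exponent $(1/r-\sum 1/p_{k})^{-1}$ being exactly $\lambda_{r}$ --- produces, for each $i\in\{1,\dots,m\}$, the anchor bound
\[
\left(\sum_{j_{i}=1}^{n}\left(\sum_{\widehat{j_{i}}=1}^{n}\left\Vert vA(e_{j_{1}},\dots,e_{j_{m}})\right\Vert _{F}^{\widetilde{q}}\right)^{\frac{\lambda_{r}}{\widetilde{q}}}\right)^{\frac{1}{\lambda_{r}}}\leq C_{m}\left\Vert A\right\Vert \pi_{r,1}(v),
\]
valid for all $m$-linear $A\colon\ell_{p_{1}}^{n}\times\cdots\times\ell_{p_{m}}^{n}\to E$ and all absolutely $(r,1)$-summing $v\colon E\to F$.

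Next I would reduce to the boundary of the admissible region: by the monotonicity of the $\ell_{p}$-norms it suffices to prove (\ref{abps}) when $t_{k}\in[\lambda_{r},\widetilde{q}]$ for all $k$ and $1/t_{1}+\cdots+1/t_{m}=1/\lambda_{r}+(m-1)/\widetilde{q}$, since any admissible tuple with a strictly smaller sum of reciprocals can be decreased coordinatewise, keeping each $t_{k}$ in $[\lambda_{r},\widetilde{q}]$, until its reciprocals sum to $1/\lambda_{r}+(m-1)/\widetilde{q}$ (possible precisely because $\lambda_{r}\leq\widetilde{q}$), and doing so only enlarges the left-hand side of (\ref{abps}). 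If $\widetilde{q}=\lambda_{r}$, this constraint forces $t_{1}=\cdots=t_{m}=\lambda_{r}$ and the anchor bound for $i=1$ is already (\ref{abps}); so assume $\lambda_{r}<\widetilde{q}$. Since $\lambda_{r}<\widetilde{q}$, Minkowski's inequality lets us rewrite each anchor bound in the nesting order $j_{1}$ (outermost) $\to\cdots\to j_{m}$ (innermost) of (\ref{abps}): pushing the outer $\ell_{\lambda_{r}}$-sum inward past the $\ell_{\widetilde{q}}$-sums over $j_{1},\dots,j_{i-1}$ can only decrease the value, so for each $i$ we obtain the inequality with exponent vector $(\widetilde{q},\dots,\widetilde{q},\lambda_{r},\widetilde{q},\dots,\widetilde{q})$, where $\lambda_{r}$ sits in the $i$-th slot, again with bound $C_{m}\Vert A\Vert\,\pi_{r,1}(v)$. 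Now set $\theta_{i}:=(1/t_{i}-1/\widetilde{q})(1/\lambda_{r}-1/\widetilde{q})^{-1}$. Then $\theta_{i}\in[0,1]$ because $\lambda_{r}\leq t_{i}\leq\widetilde{q}$, we have $\theta_{1}+\cdots+\theta_{m}=1$ precisely because $1/t_{1}+\cdots+1/t_{m}=1/\lambda_{r}+(m-1)/\widetilde{q}$, and $1/t_{k}=\theta_{k}/\lambda_{r}+(1-\theta_{k})/\widetilde{q}$ for every $k$; that is, $1/t_{k}$ is the $\theta$-weighted average over $i$ of the $k$-th exponents of the $m$ anchors. Applying the H\"{o}lder inequality for mixed-norm sequence spaces with the weights $\theta_{1},\dots,\theta_{m}$ to these $m$ reordered anchor inequalities bounds the left-hand side of (\ref{abps}) by $\prod_{i=1}^{m}\bigl(C_{m}\Vert A\Vert\,\pi_{r,1}(v)\bigr)^{\theta_{i}}=C_{m}\Vert A\Vert\,\pi_{r,1}(v)$, which is (\ref{abps}).

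The hard part will be this last interpolation step: one must check that the $m$ anchor inequalities coming from Proposition~\ref{dimant}, after the Minkowski reordering, are exactly the vertices that, under the mixed-norm H\"{o}lder inequality, sweep out the whole face $\{(t_{1},\dots,t_{m}):t_{k}\in[\lambda_{r},\widetilde{q}],\ 1/t_{1}+\cdots+1/t_{m}=1/\lambda_{r}+(m-1)/\widetilde{q}\}$, the remainder of the admissible region being recovered by monotonicity. A minor but necessary observation is that enlarging the cotype parameter from $q$ to $\widetilde{q}$ leaves the internal exponent $\lambda_{r}$ of Proposition~\ref{dimant} unchanged --- which is what makes the same $\lambda_{r}$ appear in (\ref{abps}) whether $\lambda_{r}\leq q$ or $\lambda_{r}>q$; everything else, namely the reduction to the face, the Minkowski reorderings, and the degenerate case $\widetilde{q}=\lambda_{r}$, is routine.
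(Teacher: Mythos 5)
The paper never proves Theorem~\ref{israel}: it is stated purely as a citation of \cite[Theorem 2.2]{alb2}, so there is no internal proof to compare against, and I am only assessing whether your argument is sound. It is, and it is a clean self-contained derivation from Proposition~\ref{dimant}. The three observations that carry the proof are all correct: (a) invoking Proposition~\ref{dimant} with the enlarged cotype parameter $\widetilde{q}=\max\{\lambda_r,q\}$ is legitimate because $F$ still has cotype $\widetilde{q}\geq q$, $r\leq q\leq\widetilde{q}$, the hypothesis $\sum 1/p_k\leq 1/r-1/\widetilde{q}$ holds in both regimes (with equality when $\widetilde{q}=\lambda_r$), and $\lambda_r=(1/r-\sum 1/p_k)^{-1}$ is independent of the cotype parameter; (b) pushing the outer $\ell_{\lambda_r}$-sum inward past $\ell_{\widetilde{q}}$-sums over $j_1,\dots,j_{i-1}$ is an instance of Minkowski's inequality with $\lambda_r\leq\widetilde{q}$ and only decreases the mixed norm, so each reordered anchor is dominated by the Dimant--Sevilla-Peris bound; and (c) the weights $\theta_k=(1/t_k-1/\widetilde{q})/(1/\lambda_r-1/\widetilde{q})$ satisfy $\theta_k\in[0,1]$, $\sum\theta_k=1$, and $1/t_k=\theta_k/\lambda_r+(1-\theta_k)/\widetilde{q}$ on the face, which is exactly what log-convexity of mixed-norm sequence norms (iterated H\"older, i.e.\ the Benedek--Panzone interpolation inequality for a fixed array) requires. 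What you flag as ``the hard part'' is therefore routine once those weights are written out; the reduction to the face by monotonicity is also correct, since the available slack $(m-1)(1/\lambda_r-1/\widetilde{q})$ dominates the deficit $1/\lambda_r+(m-1)/\widetilde{q}-\sum 1/t_k$, so each $1/t_k$ can be increased within $[1/\widetilde{q},1/\lambda_r]$ until the boundary is reached. I see no gap.
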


Observe that when $\max\left\{  \lambda_{r},q\right\}  =\lambda_{r}$, all
exponents $t_{1},\ldots,t_{m}$ in $(\ref{abps})$ are $\lambda_{r}$. Note also
that $\max\left\{  \lambda_{r},q\right\}  =\lambda_{r}$ combined with
$1/p_{1}+\cdots+1/p_{m}<1/r$ is equivalent to
\[
\frac{1}{r}-\frac{1}{q}\leq\frac{1}{p_{1}}+\cdots+\frac{1}{p_{m}}<\frac{1}%
{r}\text{.}%
\]
The main result of this section (Theorem \ref{vector}) improves Theorem
\ref{israel} when $\max\left\{  \lambda_{r},q\right\}  =\lambda_{r}$. We begin
with some preliminary results that shall be used later

\subsection{Preliminaries}

The proof of Theorem \ref{vector} shall invoke two results stated below. The
first one appears in \cite{AAPR} and the second one is a slight extension of
Proposition \ref{dimant}:

\begin{theorem}
\cite[Theorem 2.1]{AAPR}\label{661}\ Let $F$ be a Banach space with cotype
$\rho$. If $1/p_{1}+\cdots+1/p_{m}<1/\rho$, then there is a constant
$C_{m}\geq1$ such that%
\[
\left(  \sum_{j_{1}=1}^{n}\left(  \sum_{j_{2}=1}^{n}\cdots\left(  \sum
_{j_{m}=1}^{n}\left\Vert A(e_{j_{1}},\ldots,e_{j_{m}})\right\Vert _{F}^{r_{m}%
}\right)  ^{\frac{r_{m-1}}{r_{m}}}\cdots\right)  ^{\frac{r_{1}}{r_{2}}%
}\right)  ^{\frac{1}{r_{1}}}\leq C_{m}\left\Vert A\right\Vert
\]
for all $m$-linear operators $A\colon\ell_{p_{1}}^{n}\times\cdots\times
\ell_{p_{m}}^{n}\rightarrow F$, with%
\[
r_{k}=\left[  \dfrac{1}{\rho}-\left(  \dfrac{1}{p_{k}}+\cdots+\dfrac{1}{p_{m}%
}\right)  \right]  ^{-1}\text{,}%
\]
for all $k=1,\ldots,m$.
\end{theorem}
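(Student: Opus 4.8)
The plan is to imitate the template used in Section~3 for the scalar Theorem~\ref{9876}: start from a convenient \emph{isotropic} coincidence, transport it through the Anisotropic Regularity Principle, and read the conclusion back through the dictionary between Hardy--Littlewood inequalities and multiple summing operators. First I would record the base case: since $F$ has cotype $\rho$, there is a constant $C_{m}$ (depending only on $m$ and on the cotype constant of $F$, not on $n$) such that
\[
\left(  \sum_{j_{1}=1}^{n}\cdots\sum_{j_{m}=1}^{n}\left\Vert A\left(e_{j_{1}},\ldots,e_{j_{m}}\right)  \right\Vert _{F}^{\rho}\right)  ^{\frac{1}{\rho}}\leq C_{m}\left\Vert A\right\Vert
\]
for every $m$-linear operator $A\colon\ell_{\infty}^{n}\times\cdots\times\ell_{\infty}^{n}\rightarrow F$; equivalently, every such $A$ is multiple $\left(  \rho;1,\ldots,1\right)  $-summing. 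For $m=1$ this is the classical fact that bounded operators from $\mathcal{C}(K)$-spaces into a cotype-$\rho$ space are absolutely $(\rho,1)$-summing; the $m$-linear version is the standard generalization, which I would quote as a known coincidence result (it does not follow by a naive induction, since iterating the linear estimate introduces a spurious factor $n^{(m-1)/\rho}$). By the equivalence $(\ref{trt})\Leftrightarrow(\ref{trt2})$ applied with $p_{1}=\cdots=p_{m}=\infty$, this coincidence holds regardless of the Banach spaces at the domain; in particular the operator $A\colon\ell_{p_{1}}^{n}\times\cdots\times\ell_{p_{m}}^{n}\rightarrow F$ of the statement is multiple $\left(  \rho;1,\ldots,1\right)  $-summing with constant $C_{m}$.

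Next I would feed this into the Anisotropic Regularity Principle, i.e. the inclusion $(\ref{9080})$, with $r=\rho$, with the role of $(p_{1},\ldots,p_{m})$ there played by $(1,\ldots,1)$, and with the role of $(q_{1},\ldots,q_{m})$ played by $(p_{1}^{\ast},\ldots,p_{m}^{\ast})$. The requirement $q_{k}\geq p_{k}$ holds since $p_{k}^{\ast}\geq1$; moreover the hypothesis $1/p_{1}+\cdots+1/p_{m}<1/\rho\leq1/2$ forces $p_{k}>2$, so each $p_{k}^{\ast}$ is finite and the triple lies in $[1,\infty)^{m}$, as required. The positivity condition of the principle becomes
\[
\frac{1}{\rho}-m+\left(  \frac{1}{p_{1}^{\ast}}+\cdots+\frac{1}{p_{m}^{\ast}}\right)  =\frac{1}{\rho}-\left(  \frac{1}{p_{1}}+\cdots+\frac{1}{p_{m}}\right)  >0,
\]
which is precisely our assumption; and the defining relation $\dfrac{1}{s_{k}}-\left(  \dfrac{1}{p_{k}^{\ast}}+\cdots+\dfrac{1}{p_{m}^{\ast}}\right)  =\dfrac{1}{\rho}-(m-k+1)$, after substituting $1/p_{j}^{\ast}=1-1/p_{j}$, collapses exactly to $s_{k}=\left[  1/\rho-\left(  1/p_{k}+\cdots+1/p_{m}\right)  \right]  ^{-1}=r_{k}$. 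Hence $A$ is multiple $\left(  r_{1},\ldots,r_{m};p_{1}^{\ast},\ldots,p_{m}^{\ast}\right)  $-summing, and with the same constant $C_{m}$, since the inclusion operator in $(\ref{9080})$ has norm one.

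Finally, evaluating this multiple-summing estimate on the unit vectors $e_{1},\ldots,e_{n}$ of each $\ell_{p_{k}}^{n}$ --- for which the weak-$\ell_{p_{k}^{\ast}}$ factors $\sup_{\varphi\in B_{(\ell_{p_{k}}^{n})^{\ast}}}\left(  \sum_{j}|\varphi(e_{j})|^{p_{k}^{\ast}}\right)  ^{1/p_{k}^{\ast}}$ all equal $1$ --- yields exactly the asserted anisotropic inequality with constant $C_{m}$; this is just the trivial direction of $(\ref{trt})\Leftrightarrow(\ref{trt2})$. The only genuinely non-mechanical ingredient is the cotype-$\rho$ base estimate of the first step with a constant independent of $n$; everything after it is bookkeeping with the regularity principle and the summing-operator dictionary, just as in the proof of Theorem~\ref{9876}. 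The hard part is therefore to have that base coincidence at hand, so I would rely on the known statement that a cotype-$\rho$ range forces multiple $(\rho;1,\ldots,1)$-summability of multilinear operators on $\mathcal{L}_{\infty}$-spaces rather than reprove it.
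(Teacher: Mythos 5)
Your proposal is correct, and the bookkeeping is carried out carefully (the verification that the defining relation of the Anisotropic Regularity Principle collapses to $s_{k}=r_{k}$, the check that $q_{k}=p_{k}^{\ast}\in[1,\infty)$ because $p_{k}>2$, the positivity condition, and the evaluation at the unit vectors are all right). Note, however, that the paper itself does not prove this statement: Theorem \ref{661} is cited as \cite[Theorem 2.1]{AAPR} and used as a black box in the proof of Theorem \ref{vector}. What you have produced is therefore not a reconstruction of a proof that appears in the text, but an alternative derivation in the spirit of this paper's Section 3, where the scalar case $F=\mathbb{K}$, cotype $2$, is handled by exactly this pattern (isotropic base case, transport by the Anisotropic Regularity Principle, read off through the summing-operator dictionary). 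The original argument in \cite{AAPR} is a direct anisotropic argument and does not pass through the regularity principle; your route is arguably cleaner given the tools already assembled here, and it makes transparent that the theorem is a regularity phenomenon.

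One small comment on the base case. You are right to single it out as the only non-mechanical ingredient, and right that it is a known coincidence, but your parenthetical warning that it ``does not follow by a naive induction, since iterating the linear estimate introduces a spurious factor $n^{(m-1)/\rho}$'' slightly overstates the difficulty. Iterating the \emph{linear} $(\rho,1)$-summability of $A(\cdot,e_{j_2},\ldots,e_{j_m})$ does indeed lose a power of $n$; but iterating the cotype inequality directly, with independent Rademacher signs in each slot, does not: fixing $j_{2},\ldots,j_{m}$ and applying cotype $\rho$ to the family $\{A(e_{j_{1}},\ldots,e_{j_{m}})\}_{j_{1}}$ replaces the $j_{1}$-sum by an expectation over signs $\varepsilon^{(1)}$, and since $\sum_{j_{1}}\varepsilon^{(1)}_{j_{1}}e_{j_{1}}$ lies in the unit ball of $\ell_{\infty}^{n}$, one can repeat slot by slot and arrive at
\[
\left(\sum_{j_{1},\ldots,j_{m}=1}^{n}\bigl\|A(e_{j_{1}},\ldots,e_{j_{m}})\bigr\|_{F}^{\rho}\right)^{1/\rho}\leq C_{\rho}(F)^{m}\,\|A\|,
\]
with no dependence on $n$. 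So the base case you need is not merely quotable but elementary, which makes your derivation essentially self-contained.
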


Now we are able to prove the aforementioned extension of Proposition
\ref{dimant}.

\begin{proposition}
\label{jfa} Let $E$ be Banach space, $F$ be a cotype $q$ Banach space and
$v\colon E\rightarrow F$ be an absolutely $(r,1)$--summing operator (with
$1\leq r\leq q$). If $p_{1},\ldots,p_{m}\in\left[  1,\infty\right]  $ and
$1/p_{1}+\cdots+1/p_{m}\leq1/r-1/q$ are such that%
\begin{equation}
\sum_{k\neq i}\frac{1}{p_{k}}\leq\frac{1}{r}-\frac{1}{q} \label{21a}%
\end{equation}
for some $i\in\left\{  1,\dots,m\right\}  $, then there is a constant $C_{m}$,
such that
\[
\left(  \sum_{j_{i}=1}^{n}\left(  \sum_{\widehat{j_{i}}=1}^{n}\Vert
vA(e_{j_{1}},\dots,e_{j_{m}})\Vert_{F}^{q}\right)  ^{\frac{\lambda_{r}}{q}%
}\right)  ^{\frac{1}{\lambda_{r}}}\leq C_{m}\Vert A\Vert\pi_{r,1}(v)
\]
for every $m$-linear operator $A\colon\ell_{p_{1}}^{n}\times\dots\times
\ell_{p_{m}}^{n}\rightarrow E$.
\end{proposition}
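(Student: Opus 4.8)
The plan is to reduce Proposition \ref{jfa} to the scalar-valued anisotropic Hardy--Littlewood machinery combined with Theorem \ref{661} and the Anisotropic Regularity Principle for summing operators. First I would observe that the hypothesis \eqref{21a} says that, after discarding the $i$-th coordinate, the remaining exponents $p_k$ ($k\neq i$) satisfy $\sum_{k\neq i}1/p_k\leq 1/r-1/q$, so Proposition \ref{dimant} (applied with the $(m-1)$-linear operator obtained by freezing the $i$-th variable, or rather in its symmetric form over all coordinates) already controls the sum over $\widehat{j_i}$ in the $\ell_q$-norm. The point of the extra hypothesis is precisely that it lets us treat the $i$-th coordinate separately with a larger exponent than $\lambda_r$ would allow, and to then glue the two pieces back together.

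Concretely, the key steps in order: (1) Use the canonical dictionary between Hardy--Littlewood-type inequalities and multiple summing operators (the boxed equivalence $(\ref{trt})\Leftrightarrow(\ref{trt2})$, in its vector-valued variant through the operator $v$) to restate what we want as a multiple-summing inclusion for $vA$. (2) Apply Proposition \ref{dimant} to get, for each fixed value of the $i$-th index, the estimate $\big(\sum_{\widehat{j_i}}\|vA(e_{j_1},\dots,e_{j_m})\|_F^q\big)^{1/\lambda_r}\leq C_m\|A\|\pi_{r,1}(v)$ — here one exploits that removing one coordinate only improves the summability condition, which is exactly \eqref{21a}. (3) For the outer sum over $j_i$, observe that the map $z^{(i)}\mapsto A(\cdot,\dots,z^{(i)},\dots,\cdot)$ is linear, and that after composing with $v$ and using the cotype-$q$ structure of $F$ together with the absolutely $(r,1)$-summing property of $v$, one obtains an $\ell_{\lambda_r}$-summability in the $j_i$-index with the required constant. (4) Combine the inner and outer estimates — this is where one must be careful about the order of summation and Minkowski's inequality, since $q\geq\lambda_r$ guarantees that the mixed norm with the $q$-norm innermost dominates (or is dominated by, in the right direction) what we need.

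Alternatively — and this may be the cleaner route to actually write — one can deduce Proposition \ref{jfa} directly from Theorem \ref{661} applied with $\rho=q$ after absorbing $v$: since $v$ is absolutely $(r,1)$-summing and $F$ has cotype $q$, the composition behaves, for summability purposes, like a map into a cotype-$q$ space with an extra gain of $1/r$ in the exponent budget, which is precisely the shift from $1/q-(1/p_k+\cdots)$ to $1/\lambda_r$-type exponents. One then invokes the Anisotropic Regularity Principle for summing operators to move from the ``all coordinates equal to $q$ except a controlled gain'' regime to the single mixed exponent $(\lambda_r,q,\dots,q)$ arrangement with the $i$-th slot pulled out, reading off that the hypothesis \eqref{21a} is exactly the positivity/compatibility condition required by that principle. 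The constant tracks through as $C_m\|A\|\pi_{r,1}(v)$ because the inclusion operator in \eqref{9080} has norm $1$.

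The main obstacle I expect is bookkeeping the interaction between the $(r,1)$-summing operator $v$ and the cotype-$q$ estimate so that the exponents line up at $\lambda_r$ on the distinguished coordinate $i$ and at $q$ on all the others, \emph{while} verifying that the strict/non-strict inequalities ($1/p_1+\cdots+1/p_m\leq 1/r-1/q$ versus the strict version needed for Theorem \ref{661}) are handled correctly — in the boundary case $1/p_1+\cdots+1/p_m = 1/r-1/q$ one has $\lambda_r=q$ and the statement degenerates, so the genuinely new content is the strict-interior case, where a short limiting or monotonicity argument closes the gap. A secondary technical point is justifying the interchange of the order of the iterated sums (the $i$-th coordinate is singled out but need not be the outermost), which follows from Minkowski's integral inequality using $q\geq\lambda_r$; I would state this explicitly rather than leave it implicit.
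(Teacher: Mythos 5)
Your step (2) as written does not do what you need. Fixing the $i$-th index to a basis vector $e_{j_i}$ and applying Proposition~\ref{dimant} to the resulting $(m-1)$-linear operator gives, at best, a bound that is \emph{uniform} in $j_i$, namely $(\sum_{\widehat{j_i}}\|vA(e_{j_1},\dots,e_{j_m})\|_F^q)^{1/q}\leq C\,\|A\|\pi_{r,1}(v)$ for each fixed $j_i$; summing that over $j_i$ in $\ell_{\lambda_r}$ costs an unwanted factor of $n^{1/\lambda_r}$. Your step (3), ``use cotype and $(r,1)$-summing of $v$ to get $\ell_{\lambda_r}$-summability in $j_i$,'' names the desired conclusion but gives no mechanism: once the inner $\ell_q$-norm over $\widehat{j_i}$ has been taken, the remaining quantity is a scalar array indexed by $j_i$, and neither the cotype of $F$ nor the summing property of $v$ (both of which were already consumed inside Proposition~\ref{dimant}) applies to it. This is the genuine gap.

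The paper closes it with a different device: fix $x\in\ell_{p_i}^n$, consider the $m$-linear operator $A_i(z^{(1)},\dots,z^{(m)})=A(z^{(1)},\dots,xz^{(i)},\dots,z^{(m)})$ on $\ell_{p_1}^n\times\cdots\times\ell_\infty^n\times\cdots\times\ell_{p_m}^n$ (note $\ell_\infty^n$, not a frozen index, in slot $i$), and apply Proposition~\ref{dimant} to $A_i$ with the \emph{reduced} exponent sum $\sum_{k\neq i}1/p_k\leq 1/r-1/q$, which is exactly what \eqref{21a} guarantees. This yields, with $1/\lambda_r'=1/\lambda_r+1/p_i$, a weighted estimate $(\sum_{j_i}|x_{j_i}|^{\lambda_r'}(\sum_{\widehat{j_i}}\|vA\|_F^q)^{\lambda_r'/q})^{1/\lambda_r'}\leq C_m\|A\|\|x\|_{\ell_{p_i}^n}\pi_{r,1}(v)$. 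Taking the supremum over $\|x\|_{\ell_{p_i}^n}\leq 1$ and using $(p_i/\lambda_r')^*=\lambda_r/\lambda_r'$ dualizes this into the unweighted $\ell_{\lambda_r}(\ell_q)$-norm. It is precisely the multiplication-by-$x$ substitution together with this duality that promotes $\lambda_r'$ to $\lambda_r$, and neither appears in your sketch.

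Your alternative route through Theorem~\ref{661} ``after absorbing $v$'' has the same problem one level up: Theorem~\ref{661} concerns $F$-valued operators with no $v$ in sight, and the statement that composing with an $(r,1)$-summing $v$ ``behaves like a cotype-$q$ target with an extra $1/r$ gain'' is exactly the content of Proposition~\ref{dimant}, not something you may assume independently. Likewise, the Anisotropic Regularity Principle does not by itself relocate the distinguished exponent to an arbitrary coordinate $i$; it transforms a $\Pi_{(r;p_1,\dots,p_m)}$-inclusion into a $\Pi_{(s_1,\dots,s_m;q_1,\dots,q_m)}$-inclusion under coupled changes of the $p_k$'s, and does not supply the weighted-duality step above. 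Finally, the boundary case $\sum 1/p_k=1/r-1/q$ needs no separate limiting argument: then $\lambda_r=q$ and the statement is exactly Proposition~\ref{dimant}, so there is nothing left to prove.
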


\begin{proof}
Let $A\colon\ell_{p_{1}}^{n}\times\dots\times\ell_{p_{m}}^{n}\rightarrow E$ be
an $m$-linear operator. Choose an index $i$ satisfying (\ref{21a}) and fix
$x\in\ell_{p_{i}}^{n}$. Consider
\[
A_{i}\colon\ell_{p_{1}}^{n}\times\cdots\times\ell_{p_{i-1}}^{n}\times
\ell_{\infty}^{n}\times\ell_{p_{i+1}}^{n}\times\cdots\times\ell_{p_{m}}%
^{n}\rightarrow E
\]
defined by%
\[
A_{i}(z^{(1)},\dots,z^{(m)})=A(z^{(1)},\dots,z^{\left(  i-1\right)  }%
,xz^{(i)},z^{\left(  i+1\right)  },\dots,z^{(m)})\text{,}%
\]
where $xz^{(i)}=\left(  x_{j}z_{j}^{(i)}\right)  _{j=1}^{n}$. Let
$1/\lambda_{r}^{\prime}:=1/\lambda_{r}+1/p_{i}$. By applying Proposition
\ref{dimant} to $A_{i}$, we know that%
\begin{align}
&  \left(  \sum_{j_{i}=1}^{n}\left\vert x_{j_{i}}\right\vert ^{\lambda
_{r}^{\prime}}\left(  \sum_{\widehat{j_{i}}=1}^{n}\left\Vert vA(e_{j_{1}%
},\dots,e_{j_{i-1}},e_{j_{i}},e_{j_{i+1}},\dots,e_{j_{m}})\right\Vert _{F}%
^{q}\right)  ^{\frac{1}{q}\times\lambda_{r}^{\prime}}\right)  ^{\frac
{1}{\lambda_{r}^{\prime}}}\label{1czx}\\
&  =\left(  \sum_{j_{i}=1}^{n}\left(  \sum_{\widehat{j_{i}}=1}^{n}\left\Vert
vA(e_{j_{1}},\dots,e_{j_{i-1}},x_{j_{i}}e_{j_{i}},e_{j_{i+1}},\dots,e_{j_{m}%
})\right\Vert _{F}^{q}\right)  ^{\frac{1}{q}\times\lambda_{r}^{\prime}%
}\right)  ^{\frac{1}{\lambda_{r}^{\prime}}}\nonumber\\
&  =\left(  \sum_{j_{i}=1}^{n}\left(  \sum_{\widehat{j_{i}}=1}^{n}\left\Vert
vA_{i}(e_{j_{1}},\dots,e_{j_{m}})\right\Vert _{F}^{q}\right)  ^{\frac{1}%
{q}\times\lambda_{r}^{\prime}}\right)  ^{\frac{1}{\lambda_{r}^{\prime}}%
}\nonumber\\
&  \leq C_{m}\Vert A\Vert\Vert x\Vert_{\ell_{p_{i}}^{n}}\pi_{r,1}%
(v)\text{.}\nonumber
\end{align}
Since $\left(  p_{i}/\lambda_{r}^{\prime}\right)  ^{\ast}=\lambda_{r}%
/\lambda_{r}^{\prime}$, we get%
\begin{align*}
&  \left(  \sum_{j_{i}=1}^{n}\left(  \sum_{\widehat{j_{i}}=1}^{n}\left\Vert
vA(e_{j_{1}},\dots,e_{j_{m}})\right\Vert _{F}^{q}\right)  ^{\frac{1}{q}%
\times\lambda_{r}}\right)  ^{\frac{1}{\lambda_{r}}}\\
&  =\left(  \sum_{j_{i}=1}^{n}\left(  \sum_{\widehat{j_{i}}=1}^{n}\left\Vert
vA(e_{j_{1}},\dots,e_{j_{m}})\right\Vert _{F}^{q}\right)  ^{\frac{1}{q}%
\times\lambda_{r}^{\prime}\times\left(  \frac{p_{i}}{\lambda_{r}^{\prime}%
}\right)  ^{\ast}}\right)  ^{\frac{1}{\lambda_{r}^{\prime}}\times\frac
{1}{\left(  \frac{p_{i}}{\lambda_{r}^{\prime}}\right)  ^{\ast}}}\\
&  =\left(  \left\Vert \left(  \left(  \sum_{\widehat{j_{i}}=1}^{n}\left\Vert
vA(e_{j_{1}},\dots,e_{j_{m}})\right\Vert _{F}^{q}\right)  ^{\frac{1}{q}%
\times\lambda_{r}^{\prime}}\right)  _{j_{i}=1}^{n}\right\Vert _{\left(
\frac{p_{i}}{\lambda_{r}^{\prime}}\right)  ^{\ast}}\right)  ^{\frac{1}%
{\lambda_{r}^{\prime}}}\\
&  =\left(  \sup_{y\in B_{\ell_{\frac{p_{i}}{\lambda_{r}^{\prime}}}^{n}}}%
\sum_{j_{i}=1}^{n}\left\vert y_{j_{i}}\right\vert \left(  \sum_{\widehat
{j_{i}}=1}^{n}\left\Vert vA(e_{j_{1}},\dots,e_{j_{m}})\right\Vert _{F}%
^{q}\right)  ^{\frac{1}{q}\times\lambda_{r}^{\prime}}\right)  ^{\frac
{1}{\lambda_{r}^{\prime}}}\\
&  =\left(  \sup_{x\in B_{\ell_{p_{i}}^{n}}}\sum_{j_{i}=1}^{n}\left\vert
x_{j_{i}}\right\vert ^{\lambda_{r}^{\prime}}\left(  \sum_{\widehat{j_{i}}%
=1}^{n}\left\Vert vA(e_{j_{1}},\dots,e_{j_{m}})\right\Vert _{F}^{q}\right)
^{\frac{1}{q}\times\lambda_{r}^{\prime}}\right)  ^{\frac{1}{\lambda
_{r}^{\prime}}}\leq C_{m}\Vert A\Vert\pi_{r,1}(v)\text{,}%
\end{align*}
where the last inequality holds by (\ref{1czx}).
\end{proof}

As a corollary, we obtain a cotype $q$ version of \cite[Proposition 4.1]{alb},
which is of independent interest:

\begin{corollary}
Let $E,F$ be Banach spaces where $F$ has cotype $q$. Let $v\colon E\rightarrow
F$ be an absolutely $(r,1)$-summing operator with $1\leq r\leq q$. If
$p_{1},\ldots,p_{m}\in\left[  1,\infty\right]  $ and $1/p_{1}+\cdots
+1/p_{m}\leq1/r-1/q$ are such that $\ $%
\[
\sum_{k\neq i}\frac{1}{p_{k}}\leq\frac{1}{r}-\frac{1}{q}%
\]
for all $i\in\{1,\dots,m\}$, then there is a constant $C_{m}$, such that
\[
\left(  \sum_{j_{i}=1}^{n}\left(  \sum_{\widehat{j_{i}}=1}^{n}\Vert
vA(e_{j_{1}},\dots,e_{j_{m}})\Vert_{F}^{q}\right)  ^{\frac{\lambda_{r}}{q}%
}\right)  ^{\frac{1}{\lambda_{r}}}\leq C_{m}\Vert A\Vert\pi_{r,1}(v)
\]
for every $m$-linear operator $A\colon\ell_{p_{1}}^{n}\times\dots\times
\ell_{p_{m}}^{n}\rightarrow E$ for all $i\in\{1,\dots,m\}$.
\end{corollary}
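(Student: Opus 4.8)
The plan is to read this off directly from Proposition \ref{jfa}; the two statements differ only in the quantifier attached to the ``missing index'' condition. Proposition \ref{jfa} assumes $\sum_{k\neq i}1/p_{k}\leq 1/r-1/q$ for \emph{some} $i\in\{1,\dots,m\}$ and then delivers the mixed-norm estimate in which that coordinate is placed in the outer ($\ell_{\lambda_{r}}$) position, whereas here the condition is imposed for \emph{every} $i$ and the conclusion is wanted for every $i$. So the whole content of the corollary is a finite iteration of the proposition.

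First I would fix an arbitrary $i\in\{1,\dots,m\}$. By hypothesis $\sum_{k\neq i}1/p_{k}\leq 1/r-1/q$, so condition (\ref{21a}) holds for this $i$; together with $1/p_{1}+\cdots+1/p_{m}\leq 1/r-1/q$ and $1\leq r\leq q$, all the hypotheses of Proposition \ref{jfa} are in force. Applying that proposition therefore yields a constant $C_{m}$ with
\[
\left(\sum_{j_{i}=1}^{n}\left(\sum_{\widehat{j_{i}}=1}^{n}\Vert vA(e_{j_{1}},\dots,e_{j_{m}})\Vert_{F}^{q}\right)^{\frac{\lambda_{r}}{q}}\right)^{\frac{1}{\lambda_{r}}}\leq C_{m}\Vert A\Vert\pi_{r,1}(v)
\]
for every $m$-linear operator $A\colon\ell_{p_{1}}^{n}\times\cdots\times\ell_{p_{m}}^{n}\rightarrow E$. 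Since $i$ was arbitrary, this holds for each $i\in\{1,\dots,m\}$, which is precisely the assertion. If one wants a single constant valid simultaneously for all $i$, it suffices to replace $C_{m}$ by the maximum of the $m$ constants so produced, or to observe that the constant furnished by Proposition \ref{jfa} depends only on $m$, $r$ and $q$ and not on the frozen coordinate (its proof applies Proposition \ref{dimant} to the auxiliary operator $A_{i}$ and the ensuing H\"older step is insensitive to which index was frozen).

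The hard part will be essentially nothing: this is a purely quantificational corollary of Proposition \ref{jfa}, and it is the cotype-$q$ transcription of the passage from the ``some $i$'' version to the ``all $i$'' version in the scalar-valued setting. The only point that genuinely deserves a line is the uniformity of $C_{m}$ in $i$, which is disposed of as above; everything else is immediate.
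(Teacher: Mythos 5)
Your proposal is correct and takes essentially the same approach as the paper, which states this as an immediate corollary of Proposition \ref{jfa} without further argument; applying that proposition to each index $i$ separately (and taking the maximum constant over the finitely many $i$) is exactly what is intended.
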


\subsection{Proof of Theorem \ref{vector}}

Observe that, if $k_{0}=1$, we have $\frac{1}{p_{2}}+\cdots+\frac{1}{p_{m}%
}<\frac{1}{r}-\frac{1}{q}$ and taking
\[
s_{1}=\left[  \dfrac{1}{r}-\left(  \dfrac{1}{p_{1}}+\cdots+\dfrac{1}{p_{m}%
}\right)  \right]  ^{-1}%
\]
and $s_{k}=q$ for all $k>1$, in this case, the result is proved by Proposition
\ref{jfa}. Let us consider $k_{0}>1$. Let $A\colon\ell_{p_{1}}^{n}\times
\cdots\times\ell_{p_{m}}^{n}\rightarrow E$ be an $m$-linear operator and
$v\colon E\rightarrow F$ be an absolutely $\left(  r,1\right)  $-summing
operator. By%
\begin{equation}
\frac{1}{r}-\frac{1}{q}\leq\dfrac{1}{p_{k_{0}}}+\cdots+\dfrac{1}{p_{m}}
\label{14dezaa}%
\end{equation}
we have
\begin{equation}
\frac{1}{\rho}:=\dfrac{1}{r}-\left(  \dfrac{1}{p_{k_{0}}}+\cdots+\dfrac
{1}{p_{m}}\right)  \leq\frac{1}{q}\text{.} \label{Coti}%
\end{equation}
Since%
\[
\frac{1}{p_{1}}+\cdots+\frac{1}{p_{m}}<\frac{1}{r}\text{,}%
\]
we have%
\begin{equation}
\frac{1}{p_{1}}+\cdots+\frac{1}{p_{k_{0}-1}}<\frac{1}{\rho} \label{set113}%
\end{equation}
and, by definition of $k_{0}$, we obtain%
\begin{equation}
\dfrac{1}{p_{k_{0}+1}}+\cdots+\dfrac{1}{p_{m}}<\dfrac{1}{r}-\dfrac{1}%
{q}\text{.} \label{isra}%
\end{equation}
By (\ref{Coti}) we have $\rho\geq q$ and thus $\ell_{\rho}^{n}\left(  \ell
_{q}^{n}\cdots\left(  \ell_{q}^{n}\left(  F\right)  \cdots\right)  \right)  $
has cotype $\rho$. Define the $\left(  k_{0}-1\right)  $-linear operator%
\[
vA_{e}\colon\ell_{p_{1}}^{n}\times\cdots\times\ell_{p_{k_{0}-1}}%
^{n}\rightarrow\ell_{\rho}^{n}\left(  \ell_{q}^{n}\cdots\left(  \ell_{q}%
^{n}\left(  F\right)  \cdots\right)  \right)
\]
by%
\[
vA_{e}(x^{\left(  1\right)  },\ldots,x^{\left(  k_{0}-1\right)  })=\left(
vA\left(  x^{\left(  1\right)  },\ldots,x^{\left(  k_{0}-1\right)
},e_{j_{k_{0}}},\ldots,e_{j_{m}}\right)  \right)  _{j_{k_{0}},\ldots,j_{m}%
=1}^{n}\text{.}%
\]
Note that, there is a constant $C_{m-k_{0}+1}$ such that
\[
\left\Vert vA_{e}\right\Vert \leq C_{m-k_{0}+1}\left\Vert A\right\Vert
\pi_{r,1}\left(  v\right)  \text{.}%
\]
In fact, for fixed $x^{(1)},\ldots,x^{(k_{0}-1)}$, using (\ref{14dezaa}) and
(\ref{isra}), by Proposition \ref{jfa} we obtain (for the respective $\left(
m-k_{0}+1\right)  -$linear operator)
\begin{align*}
&  \left(  \sum_{j_{k_{0}}=1}^{n}\left(  \sum_{j_{k_{0}+1},\ldots,j_{m}=1}%
^{n}\Vert vA\left(  x^{\left(  1\right)  },\ldots,x^{\left(  k_{0}-1\right)
},e_{j_{k_{0}}},\ldots,e_{j_{m}}\right)  \Vert_{F}^{q}\right)  ^{\frac{\rho
}{q}}\right)  ^{\frac{1}{\rho}}\\
&  \leq C_{m-k_{0}+1}\pi_{r,1}(v)\left\Vert A\left(  x^{\left(  1\right)
},\ldots,x^{\left(  k_{0}-1\right)  },\cdot,\ldots,\cdot\right)  \right\Vert
\\
&  \leq C_{m-k_{0}+1}\pi_{r,1}(v)\left\Vert A\right\Vert \left\Vert x^{\left(
1\right)  }\right\Vert \cdot\cdots\cdot\left\Vert x^{\left(  k_{0}-1\right)
}\right\Vert \text{.}%
\end{align*}
Thus
\begin{align*}
&  \left\Vert vA_{e}\right\Vert =\sup_{\left\Vert x^{(1)}\right\Vert
,\ldots,\left\Vert x^{(k_{0}-1)}\right\Vert \leq1}\left\Vert vA_{e}\left(
x^{\left(  1\right)  },\ldots,x^{\left(  k_{0}-1\right)  }\right)  \right\Vert
_{\ell_{\rho}^{n}\left(  \ell_{q}^{n}\cdots\left(  \ell_{q}^{n}\left(
F\right)  \cdots\right)  \right)  }\\
&  =\sup_{\left\Vert x^{\left(  1\right)  }\right\Vert ,\ldots,\left\Vert
x^{\left(  k_{0}-1\right)  }\right\Vert \leq1}\left(  \sum_{j_{k_{0}}=1}%
^{n}\left(  \sum_{j_{k_{0}+1},\ldots,j_{m}=1}^{n}\Vert vA\left(  x^{\left(
1\right)  },\ldots,x^{\left(  k_{0}-1\right)  },e_{j_{k_{0}}},\ldots,e_{j_{m}%
}\right)  \Vert_{F}^{q}\right)  ^{\frac{\rho}{q}}\right)  ^{\frac{1}{\rho}}\\
&  \leq C_{m-k_{0}+1}\left\Vert A\right\Vert \pi_{r,1}\left(  v\right)
\text{,}%
\end{align*}
as required.

On the other hand, since $\ell_{\rho}^{n}\left(  \ell_{q}^{n}\cdots\left(
\ell_{q}^{n}\left(  F\right)  \cdots\right)  \right)  $ has cotype $\rho$ and
using (\ref{set113}) and Theorem \ref{661} for the $\left(  k_{0}-1\right)
$-linear operator $vA_{e}$, we conclude that there is a constant $C_{k_{0}-1}$
such that
\begin{align*}
&  \left(  \sum_{j_{1}=1}^{n}\left(  \sum_{j_{2}=1}^{n}\cdots\left(
\sum_{j_{k_{0}-1}=1}^{n}\left(  \sum_{j_{k_{0}}=1}^{n}\left(  \sum
_{j_{k_{0}+1},\ldots,j_{m}=1}^{n}\left\Vert vA(e_{j_{1}},\ldots,e_{j_{m}%
})\right\Vert _{F}^{q}\right)  ^{\frac{\rho}{q}}\right)  ^{\frac{r_{k_{0}-1}%
}{\rho}}\right)  ^{\frac{r_{k_{0}-2}}{r_{k_{0}-1}}}\cdots\right)
^{\frac{r_{1}}{r_{2}}}\right)  ^{\frac{1}{r_{1}}}\\
&  =\left(  \sum_{j_{1}=1}^{n}\left(  \sum_{j_{2}=1}^{n}\cdots\left(
\sum_{j_{k_{0}-1}=1}^{n}\left\Vert vA_{e}(e_{j_{1}},\ldots,e_{j_{k_{0}-1}%
})\right\Vert _{\ell_{\rho}^{n}\left(  \ell_{q}^{n}\cdots\left(  \ell_{q}%
^{n}\left(  F\right)  \cdots\right)  \right)  }^{r_{k_{0}-1}}\right)
^{\frac{r_{k_{0}-2}}{r_{k_{0}-1}}}\cdots\right)  ^{\frac{r_{1}}{r_{2}}%
}\right)  ^{\frac{1}{r_{1}}}\\
&  \leq C_{k_{0}-1}\left\Vert vA_{e}\right\Vert \\
&  \leq C_{k_{0}-1}C_{m-k_{0}+1}\left\Vert A\right\Vert \pi_{r,1}\left(
v\right)  \text{,}%
\end{align*}
with%
\[
r_{k}=%
\begin{array}
[c]{ll}%
\left[  \dfrac{1}{\rho}-\left(  \dfrac{1}{p_{k}}+\cdots+\dfrac{1}{p_{k_{0}-1}%
}\right)  \right]  ^{-1}\text{,} & \text{if }k\leq k_{0}-1\text{.}%
\end{array}
\]
Note that for $k=k_{0}$ we have $s_{k}=\rho$ and for $k>k_{0}$ we have
$q=s_{k}$. Finally, for $k<k_{0}$ we have $r_{k}=s_{k}$, since
\[
\frac{1}{r_{k}}=\dfrac{1}{\rho}-\left(  \dfrac{1}{p_{k}}+\cdots+\dfrac
{1}{p_{k_{0}-1}}\right)  =\dfrac{1}{r}-\left(  \dfrac{1}{p_{k_{0}}}%
+\cdots+\dfrac{1}{p_{m}}\right)  -\left(  \dfrac{1}{p_{k}}+\cdots+\dfrac
{1}{p_{k_{0}-1}}\right)  =\dfrac{1}{r}-\left(  \dfrac{1}{p_{k}}+\cdots
+\dfrac{1}{p_{m}}\right)  \text{,}%
\]
the proof is done.

In order to illustrate how Theorem \ref{vector} improves (numerically) Theorem
\ref{israel}, let us consider $E=\ell_{1}$, and $F=\ell_{2}$, $m=9$, and
$p_{1}=\cdots=p_{9}=10$. In this case we have $q=2$ and by Grothendieck's
theorem we can choose $r=1$ for all $v\colon\ell_{1}\rightarrow\ell_{2}$.
Since
\[
1-\frac{1}{2}\leq\frac{1}{p_{1}}+\cdots+\frac{1}{p_{9}}=\frac{9}{10}<1\text{,}%
\]
we have $k_{0}=5$ and we obtain the table below which compares the exponents
provided by Theorems \ref{vector} and Theorem \ref{israel} for $\left(
E,F,m,r,p_{j}\right)  =\left(  \ell_{1},\ell_{2},9,1,10\right)  $ for all
$j=1,\ldots,9$:%

\begin{table}[H]
\centering\caption{}$
\begin{tabular}
[c]{|c|c|c|c|c|c|c|c|c|c|}\hline& $s_{1}$ & $s_{2}$ & $s_{3}$ & $s_{4}%
$ & $s_{5}$ & $s_{6}$ & $s_{7}$ &
$s_{8}$ & $s_{9}$\\\hline\multicolumn{1}{|c|}{\cite[Theorem 2.2]{alb2}}
& $10$ & $10$ & $10$ & $10$ &
$10$ & $10$ & $10$ & $10$ & $10$\\\hline\multicolumn{1}{|c|}{Theorem \ref
{vector}} & $10$ & $5$ & $\simeq3.3$ & $2.5$
& $2$ & $2$ & $2$ & $2$ & $2$\\\hline\end{tabular}
\ $\end{table}%

\section{The critical case: globally sharp exponents}

Until very recently, the HL inequalities were just investigated for
$1/p_{1}+\cdots+1/p_{m}<1$. The reason was very simple: if we consider
$1/p_{1}+\cdots+1/p_{m}\geq1$, there does not exist a finite exponent $s$ for
which there is a constant $C_{m}$ satisfying%
\[
\left(  \sum_{j_{1}=1}^{n}\cdots\sum_{j_{m}=1}^{n}\left\vert A\left(
e_{j_{1}},\ldots,e_{j_{m}}\right)  \right\vert ^{s}\right)  ^{\frac{1}{s}}\leq
C_{m}\left\Vert A\right\Vert \text{,}%
\]
for all $m$-linear forms $A\colon\ell_{p_{1}}^{n}\times\cdots\times\ell
_{p_{m}}^{n}\rightarrow\mathbb{K}$. So, at first glance, it seemed that no
theory was supposed to be expected in this framework. However, this is a
blurred perspective; when we consider just one exponent $s$ at all sums, we
lose information. So, in \cite{anais, paulino}, the authors initiated the
investigation of the case $1/p_{1}+\cdots+1/p_{m}\geq1$ under an anisotropic
viewpoint. In this section we follow this vein and obtain globally sharp
exponents for the case $1/p_{1}+\cdots+1/p_{m}=1$. Hereafter, for the sake of
simplicity, when $s=\infty$, the sum $\left(  \sum_{j=1}^{\infty}\left\vert
a_{j}\right\vert ^{s}\right)  ^{1/s}$ denotes $\sup\left\vert a_{j}\right\vert
$.

We recall that in this case some exponents in the anisotropic Hardy-Littlewood
inequality are forced to be infinity. The first result dealing with this
notion is the following:

\begin{theorem}
\label{criticodjair}$($See \cite[Theorem 1]{paulino}$)$ There is a constant
$C_{m}$ such that
\begin{equation}
\sup_{j_{1}}\left(  \sum_{j_{2}=1}^{n}\left(  \cdots\left(  \sum_{j_{m}=1}%
^{n}\left\vert A\left(  e_{j_{1}},\dots,e_{j_{m}}\right)  \right\vert ^{s_{m}%
}\right)  ^{\frac{1}{s_{m}}s_{m-1}}\cdots\right)  ^{\frac{1}{s_{3}}s_{2}%
}\right)  ^{\frac{1}{s_{2}}}\leq C_{m}\left\Vert A\right\Vert \label{91}%
\end{equation}
for all $m$-linear forms $A\colon\ell_{m}^{n}\times\cdots\times\ell_{m}%
^{n}\rightarrow\mathbb{K}$, and all positive integers $n$, with
\[
s_{k}=\frac{2m(m-1)}{mk-2k+2}%
\]
for all $k=2,\ldots,m$. Moreover, $s_{1}=\infty$ and $s_{2}=m$ are sharp and,
for $m>2$ the optimal exponents $s_{k}$ satisfying $(\ref{91})$ fulfill
\[
s_{k}\geq\frac{m}{k-1}\text{.}%
\]

\end{theorem}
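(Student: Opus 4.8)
The plan is to separate the claim into the existence of $(\ref{91})$ and the optimality statements, relying only on H\"older's inequality and results already available above. \emph{Existence.} Fix an $m$-linear form $A\colon\ell_m^n\times\cdots\times\ell_m^n\to\mathbb{K}$ and an index $j_1$, and consider the slice $A_{j_1}:=A(e_{j_1},\cdot,\ldots,\cdot)$, an $(m-1)$-linear form on $\ell_m^n\times\cdots\times\ell_m^n$ with $\left\Vert A_{j_1}\right\Vert\leq\left\Vert A\right\Vert$ (because $\left\Vert e_{j_1}\right\Vert_{\ell_m^n}=1$). For $m\geq3$ we have $p_2=\cdots=p_m=m\in(1,2(m-1)]$ and $\tfrac12\leq\tfrac{m-1}{m}<1$, so Theorem \ref{nnn} applies to $A_{j_1}$; relabelling the exponents it produces as $s_2,\ldots,s_m$ (the index shift reflecting that $j_1$ is absent from $A_{j_1}$), a short computation gives $\tfrac{1}{s_k}=\tfrac12+\tfrac{m-k+1}{2(m-1)}-\tfrac{m-k+1}{m}=\tfrac{k(m-2)+2}{2m(m-1)}$, so $s_k=\tfrac{2m(m-1)}{mk-2k+2}$. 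Hence for each $j_1$,
\[
\left(\sum_{j_2=1}^{n}\left(\cdots\left(\sum_{j_m=1}^{n}\left\vert A(e_{j_1},\ldots,e_{j_m})\right\vert^{s_m}\right)^{\frac{s_{m-1}}{s_m}}\cdots\right)^{\frac{s_2}{s_3}}\right)^{\frac{1}{s_2}}\leq C_{m-1}\left\Vert A_{j_1}\right\Vert\leq C_{m-1}\left\Vert A\right\Vert,
\]
and taking the supremum over $j_1$ yields $(\ref{91})$ with $C_m=C_{m-1}$; the case $m=2$ is the trivial identity $\sup_{j_1}\left\Vert A(e_{j_1},\cdot)\right\Vert_{(\ell_2^n)^{\ast}}\leq\left\Vert A\right\Vert$. (Alternatively one can obtain this through the Hardy--Littlewood/multiple-summing dictionary together with the Anisotropic Regularity Principle, exactly as in the proof of Theorem \ref{9876}, a route which even shows the exponents can be decreased.)

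\emph{Optimality of $s_1=\infty$.} I would test $(\ref{91})$ on the diagonal form $A(z^{(1)},\ldots,z^{(m)})=\sum_{j=1}^{n}z^{(1)}_j\cdots z^{(m)}_j$: by H\"older's inequality with the $m$ exponents all equal to $m$ one gets $\left\Vert A\right\Vert\leq1$, while $A(e_{j_1},\ldots,e_{j_m})$ is $1$ if $j_1=\cdots=j_m$ and $0$ otherwise, so for every $j_1$ the nested sum collapses to a single term equal to $1$. Thus, were $\sup_{j_1}$ replaced by an $\ell_{s_1}$-sum with $s_1<\infty$, the left-hand side of $(\ref{91})$ would equal $n^{1/s_1}$, which is unbounded in $n$; hence $s_1=\infty$ is forced.

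\emph{Optimality of $s_2=m$ and the lower bound $s_k\geq m/(k-1)$.} For a fixed $k\in\{2,\ldots,m\}$ and a vector $y\in\mathbb{K}^n$, I would test $(\ref{91})$ on
\[
A(z^{(1)},\ldots,z^{(m)})=z^{(1)}_1\cdots z^{(k-1)}_1\cdot\sum_{j=1}^{n}y_j\,z^{(k)}_j\cdots z^{(m)}_j.
\]
H\"older's inequality, with exponent $m$ on each $z^{(i)}$ ($i\geq k$) and exponent $m/(k-1)$ on $y$, gives $\left\Vert A\right\Vert\leq\left\Vert y\right\Vert_{\ell^n_{m/(k-1)}}$, while $A(e_{j_1},\ldots,e_{j_m})$ vanishes unless $j_1=\cdots=j_{k-1}=1$ and $j_k=\cdots=j_m$, where it equals $y_{j_k}$. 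Consequently every sum over $j_2,\ldots,j_{k-1}$ and over $j_{k+1},\ldots,j_m$ in $(\ref{91})$ has a single nonzero summand, so the left-hand side collapses to $\left\Vert y\right\Vert_{\ell^n_{s_k}}$; taking $y=n^{-(k-1)/m}(1,\ldots,1)$ forces $n^{1/s_k-(k-1)/m}\leq C_m$ for all $n$, i.e.\ $s_k\geq m/(k-1)$. The case $k=2$ gives $s_2\geq m$, which combined with the existence part shows $s_2=m$ is sharp.

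The one genuinely delicate point is the algebraic verification in the existence step that substituting $p_2=\cdots=p_m=m$ into the $(m-1)$-linear Theorem \ref{nnn}, and correcting for the index shift, reproduces exactly the closed form $s_k=2m(m-1)/(mk-2k+2)$ rather than a weaker tuple; once this is checked, the remaining arguments are elementary, resting on H\"older's inequality and the collapse of the iterated sums to a single nonzero term.
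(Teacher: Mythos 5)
Theorem \ref{criticodjair} is a cited result (Paulino's Theorem~1); the paper does not reprove it, but instead proves the strictly stronger Proposition \ref{9876crit}/Corollary \ref{corocrit}, so there is no ``paper's own proof'' of this exact statement to compare against. Your proof is correct. The existence step is sound: fixing $j_1$ yields an $(m-1)$-linear form on $(\ell_m^n)^{m-1}$ with $1/p_2+\cdots+1/p_m=(m-1)/m\in[1/2,1)$ and $p_j=m\in(1,2(m-1)]$ for $m\geq3$, so Theorem \ref{nnn} applies, and the index-shifted exponent
\[
\frac{1}{s_k}=\frac{1}{2}+\frac{m-k+1}{2(m-1)}-\frac{m-k+1}{m}=\frac{km-2k+2}{2m(m-1)}
\]
does reproduce $s_k=2m(m-1)/(mk-2k+2)$ exactly; taking $\sup_{j_1}$ and using $\|A(e_{j_1},\cdot,\ldots,\cdot)\|\leq\|A\|$ gives (\ref{91}). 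Your optimality arguments are also correct: the diagonal form gives $\|A\|\leq1$ while the inner nested sum equals $1$ for every $j_1$, forcing $s_1=\infty$; and for fixed $k$, the form $z^{(1)}_1\cdots z^{(k-1)}_1\sum_j y_j z^{(k)}_j\cdots z^{(m)}_j$ has $\|A\|\leq\|y\|_{m/(k-1)}$ by H\"older while the left side of (\ref{91}) collapses to $\|y\|_{s_k}$, forcing $s_k\geq m/(k-1)$ and, at $k=2$, $s_2\geq m$. It is worth noting that your high-level strategy (slice off $j_1$, apply a lower-degree HL inequality, take the supremum) is precisely the paper's strategy in the proof of Proposition \ref{9876crit}; the only substantive difference is that you feed in Theorem \ref{nnn} to recover Paulino's exponents, whereas the paper feeds in its own Theorem \ref{9876} to get the improved ones. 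For optimality the paper instead invokes Lemma \ref{8u8u} together with the sharpness claims of Theorem \ref{9876}, while you give explicit extremal forms; your route is more elementary and self-contained, and is essentially what underlies the paper's cited impossibility facts anyway.
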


As a consequence of Theorem \ref{9876}, we have the following generalization
of the previous theorem:

\begin{proposition}
\label{9876crit}Let $p_{1},\ldots,p_{m}\in\left[  1,\infty\right]  $ be such
that $1/2\leq1/p_{2}+\cdots+1/p_{m}<1$ and%
\begin{equation}
\frac{1}{p_{1}}+\cdots+\frac{1}{p_{m}}=1\text{.} \label{hip33}%
\end{equation}
There is a constant $C_{m}$ such that
\begin{equation}
\left(  \sum_{j_{1}=1}^{n}\left(  \cdots\left(  \sum_{j_{m}=1}^{n}\left\vert
A\left(  e_{j_{1}},\ldots,e_{j_{m}}\right)  \right\vert ^{s_{m}}\right)
^{\frac{s_{m-1}}{s_{m}}}\cdots\right)  ^{\frac{s_{1}}{s_{2}}}\right)
^{\frac{1}{s_{1}}}\leq C_{m}\left\Vert A\right\Vert \label{2222}%
\end{equation}
for all $m$-linear forms $A\colon\ell_{p_{1}}^{n}\times\cdots\times\ell
_{p_{m}}^{n}\rightarrow\mathbb{K}$, where%
\[
s_{k}=\left\{
\begin{array}
[c]{ll}%
\left[  1-\left(  \dfrac{1}{p_{k}}+\cdots+\dfrac{1}{p_{m}}\right)  \right]
^{-1}\text{,} & \text{if }1\leq k\leq k_{0}:=\max\left\{  t:\dfrac{1}{p_{t}%
}+\cdots+\dfrac{1}{p_{m}}\geq\dfrac{1}{2}\right\}  \text{,}\\
\vspace{-0.3cm} & \\
2\text{,} & \text{if }k>k_{0}\text{.}%
\end{array}
\right.
\]
Moreover:

\begin{enumerate}
\item[\emph{(i)}] The exponents $s_{1},\ldots,s_{k_{0}}$ are optimal.

\item[\emph{(ii)}] If $p_{k_{0}}\geq2$, all the exponents are globally sharp.
\end{enumerate}
\end{proposition}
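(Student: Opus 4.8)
The plan is to derive the inequality $(\ref{2222})$ by freezing the first coordinate and invoking Theorem \ref{9876} for the resulting $(m-1)$-linear form, and then to obtain (i) and (ii) by transferring, via Lemma \ref{8u8u}, the optimality already known for Theorem \ref{9876} and the Kahane--Salem--Zygmund bound $(\ref{colombia})$, together with one elementary test on the diagonal form. For the existence part, I would fix an $m$-linear form $A\colon\ell_{p_1}^n\times\cdots\times\ell_{p_m}^n\to\mathbb{K}$; for each $j_1$, the $(m-1)$-linear form $A(e_{j_1},\cdot,\ldots,\cdot)\colon\ell_{p_2}^n\times\cdots\times\ell_{p_m}^n\to\mathbb{K}$ has norm at most $\|A\|\,\|e_{j_1}\|_{\ell_{p_1}^n}=\|A\|$, and since by hypothesis $1/2\le 1/p_2+\cdots+1/p_m<1$, Theorem \ref{9876} applies to it (when $m=2$ this step is just the identification of $A(e_{j_1},\cdot)$ with an element of $(\ell_{p_2}^n)^{\ast}$). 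As $1/p_2+\cdots+1/p_m\ge 1/2$ forces $k_0\ge 2$, the value "$k_0$" governing the $(m-1)$-linear problem coincides with the one in the statement, so Theorem \ref{9876} yields precisely the exponents $s_2,\ldots,s_m$ of the proposition and, uniformly in $j_1$,
\[
\left(\sum_{j_2=1}^n\left(\cdots\left(\sum_{j_m=1}^n|A(e_{j_1},\ldots,e_{j_m})|^{s_m}\right)^{\frac{s_{m-1}}{s_m}}\cdots\right)^{\frac{s_2}{s_3}}\right)^{\frac1{s_2}}\le C_{m-1}\|A\|.
\]
Since $(\ref{hip33})$ gives $s_1=[1-(1/p_1+\cdots+1/p_m)]^{-1}=\infty$, taking the supremum over $j_1$ turns the left-hand side into that of $(\ref{2222})$, so $(\ref{2222})$ holds with $C_m:=C_{m-1}$.

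For part (i), I would first note that $s_1=\infty$ cannot be replaced by any finite exponent: testing on the diagonal form $A(z^{(1)},\ldots,z^{(m)})=\sum_{j=1}^n z_j^{(1)}\cdots z_j^{(m)}$, Hölder's inequality and $(\ref{hip33})$ give $\|A\|=1$ while $A(e_{j_1},\ldots,e_{j_m})$ vanishes unless $j_1=\cdots=j_m$, so a finite outer exponent $t_1$ would force $n^{1/t_1}\le C_m$ for every $n$. For the exponents $s_2,\ldots,s_{k_0}$, I would suppose $(\ref{2222})$ held with some $s_i$ ($2\le i\le k_0$) replaced by $s_i-\varepsilon_i$ and apply Lemma \ref{8u8u} with $k=1$ to descend to an $(m-1)$-linear inequality on $\ell_{p_2}^n\times\cdots\times\ell_{p_m}^n$ with the perturbed exponent, contradicting part (i) of Theorem \ref{9876} for that problem.

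For part (ii) it remains to show that none of $s_{k_0+1}=\cdots=s_m=2$ can be lowered. Maximality of $k_0$ forces $1/p_k<1/2$, hence $p_k>2$, for every $k>k_0$, so together with the hypothesis $p_{k_0}\ge 2$ all of $p_{k_0},\ldots,p_m$ lie in $[2,\infty]$. If $(\ref{2222})$ held with some $s_i$ ($i>k_0$) replaced by $s_i-\varepsilon_i$, Lemma \ref{8u8u} with $k=k_0-1$ would give the corresponding $(m-k_0+1)$-linear inequality on $\ell_{p_{k_0}}^n\times\cdots\times\ell_{p_m}^n$; computing $1/s_{k_0}+\cdots+1/s_m$ and using $1/(2-\varepsilon_i)>1/2$ shows that this sum strictly exceeds $\frac{(m-k_0+1)+1}{2}-\left(\frac{1}{p_{k_0}}+\cdots+\frac{1}{p_m}\right)$, contradicting $(\ref{colombia})$ — the same computation as in the proof of Theorem \ref{9876}(ii).

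The genuinely new ingredient is the freezing step in the existence proof and the bookkeeping showing that the index $k_0$ and the tail exponents $s_2,\ldots,s_m$ of the $(m-1)$-linear problem agree with those in the statement; I expect this to be the only place requiring care, since everything else is a routine transfer of arguments already used for Theorem \ref{9876}, and the critical hypothesis $1/p_1+\cdots+1/p_m=1$ is exactly what makes $s_1=\infty$ and decouples the first coordinate.
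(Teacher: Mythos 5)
Your proposal is correct and follows essentially the same route as the paper: freeze the first coordinate, invoke Theorem \ref{9876} for the $(m-1)$-linear form $A(e_{j_1},\cdot,\dots,\cdot)$ on $\ell_{p_2}^n\times\cdots\times\ell_{p_m}^n$ (using the hypothesis $1/2\le 1/p_2+\cdots+1/p_m<1$ and noting that $1/p_1+\cdots+1/p_m=1$ forces $s_1=\infty$), take the supremum over $j_1$, and then transfer optimality via Lemma~\ref{8u8u}. The two places where you diverge slightly from the paper are both fine and arguably more self-contained: for showing $s_1=\infty$ is forced you test directly on the diagonal form $\sum_j z_j^{(1)}\cdots z_j^{(m)}$ to get $n^{1/t_1}\le C_m$, whereas the paper first passes to an isotropic $\ell_\rho$ inequality by monotonicity of $\ell_q$ norms (with $\rho=\max\{s_2,\dots,s_m,r\}$) and then invokes the known nonexistence of a finite isotropic exponent when $1/p_1+\cdots+1/p_m\ge 1$; and for part (ii) you re-run the Kahane--Salem--Zygmund counting argument for the $(m-k_0+1)$-linear descendant, whereas the paper just descends once via Lemma~\ref{8u8u} to the $(m-1)$-linear case and cites Theorem~\ref{9876}(i)--(ii) wholesale (the index $k_0$ and the hypothesis $p_{k_0}\ge 2$ being inherited). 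Both alternatives work, and your bookkeeping that the $k_0$ of the descendant problem agrees with that of the original (because $1/p_2+\cdots+1/p_m\ge 1/2$ forces $k_0\ge 2$) is exactly the check that makes the transfer legitimate.
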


\begin{proof}
The proof of the existence is a simple consequence of Theorem \ref{9876}; in
fact, by Theorem \ref{9876}, for any fixed vector $e_{j_{1}}$, there is a
constant $C_{m}$ such that
\[
\left(  \sum_{j_{2}=1}^{n}\left(  \cdots\left(  \sum_{j_{m}=1}^{n}\left\vert
A\left(  e_{j_{1}},\ldots,e_{j_{m}}\right)  \right\vert ^{s_{m}}\right)
^{\frac{s_{m-1}}{s_{m}}}\cdots\right)  ^{\frac{s_{2}}{s_{3}}}\right)
^{\frac{1}{s_{2}}}\leq C_{m}\left\Vert A\right\Vert
\]
for all $m$-linear forms $A\colon\ell_{p_{1}}^{n}\times\cdots\times\ell
_{p_{m}}^{n}\rightarrow\mathbb{K}$ and this easily implies (\ref{2222}).

Now we shall prove (i) and (ii).

In order to prove (i), note that if $s_{1}=\infty$ could be improved, there
would exist $r\in\left(  0,\infty\right)  $ and $C_{m}$ such that%
\[
\left(  \sum_{j_{1}=1}^{n}\left(  \sum_{j_{2}=1}^{n}\left(  \cdots\left(
\sum_{j_{m}=1}^{n}\left\vert A\left(  e_{j_{1}},\ldots,e_{j_{m}}\right)
\right\vert ^{s_{m}}\right)  ^{\frac{s_{m-1}}{s_{m}}}\cdots\right)
^{\frac{s_{2}}{s_{3}}}\right)  ^{\frac{r}{s_{2}}}\right)  ^{\frac{1}{r}}\leq
C_{m}\left\Vert A\right\Vert
\]
for all $m$-linear forms $A\colon\ell_{p_{_{1}}}^{n}\times\cdots\times
\ell_{p_{m}}^{n}\rightarrow\mathbb{K}$. Considering $\rho=\max\left\{
s_{2},\ldots,s_{m},r\right\}  $, by the monotonicity of the $\ell_{q}$ norms
we would conclude that
\[
\left(  \sum_{j_{1},\ldots,j_{m}=1}^{n}\left\vert A(e_{j_{1}},\ldots,e_{j_{m}%
})\right\vert ^{\rho}\right)  ^{\frac{1}{\rho}}\leq C_{m}\left\Vert
A\right\Vert
\]
for all $m$-linear forms $A\colon\ell_{p_{1}}^{n}\times\cdots\times\ell
_{p_{m}}^{n}\rightarrow\mathbb{K}$, but this is impossible due to (\ref{hip33}).

On the other hand, note that if
\[
\sup_{j_{1}}\left(  \sum_{j_{2}=1}^{n}\left(  \cdots\left(  \sum_{j_{m}=1}%
^{n}\left\vert A\left(  e_{j_{1}},\ldots,e_{j_{m}}\right)  \right\vert
^{s_{m}}\right)  ^{\frac{s_{m-1}}{s_{m}}}\cdots\right)  ^{\frac{s_{2}}{s_{3}}%
}\right)  ^{\frac{1}{s_{2}}}\leq C_{m}\left\Vert A\right\Vert \text{,}%
\]
for all $m$-linear forms $A\colon\ell_{p_{1}}^{n}\times\cdots\times\ell
_{p_{m}}^{n}\rightarrow\mathbb{K}$, then by Lemma \ref{8u8u} we have%
\[
\left(  \sum_{j_{2}=1}^{n}\left(  \cdots\left(  \sum_{j_{m}=1}^{n}\left\vert
A\left(  e_{j_{2}},\ldots,e_{j_{m}}\right)  \right\vert ^{s_{m}}\right)
^{\frac{s_{m-1}}{s_{m}}}\cdots\right)  ^{\frac{s_{2}}{s_{3}}}\right)
^{\frac{1}{s_{2}}}\leq C_{m}\left\Vert A\right\Vert \text{,}%
\]
for all $\left(  m-1\right)  $-linear forms $A\colon\ell_{p_{2}}^{n}%
\times\cdots\times\ell_{p_{m}}^{n}\rightarrow\mathbb{K}$. Hence, the proofs of
(i) and (ii) are completed as consequence of Theorem \ref{9876}.
\end{proof}

As a consequence, we have an improvement of Theorem \ref{criticodjair}:

\begin{corollary}
\label{corocrit} There exists a constant $C_{m}$ such that
\begin{equation}
\left(  \sum_{j_{1}=1}^{n}\left(  \cdots\left(  \sum_{j_{m}=1}^{n}\left\vert
A\left(  e_{j_{1}},\dots,e_{j_{m}}\right)  \right\vert ^{s_{m}}\right)
^{\frac{1}{s_{m}}s_{m-1}}\cdots\right)  ^{\frac{1}{s_{2}}s_{1}}\right)
^{\frac{1}{s_{1}}}\leq C_{m}\left\Vert A\right\Vert \label{w33}%
\end{equation}
for all $m$-linear forms $A\colon\ell_{m}^{n}\times\cdots\times\ell_{m}%
^{n}\rightarrow\mathbb{K}$, with%
\[
s_{k}=\left\{
\begin{array}
[c]{ll}%
\dfrac{m}{k-1}\text{,} & \text{if }1<k\leq k_{0}\\
\vspace{-0.3cm} & \\
2\text{,} & \text{if }k>k_{0}\text{,}%
\end{array}
\right.
\]
where $k_{0}:=\left\lfloor \frac{m+2}{2}\right\rfloor $ $($the largest integer
less than or equal to $\left(  m+2\right)  /2)$. Moreover, $s_{1}%
,\ldots,s_{k_{0}}$ are sharp, and $\left(  s_{1},\ldots,s_{m}\right)  $ is
globally sharp.
\end{corollary}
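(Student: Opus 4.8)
The plan is to deduce Corollary \ref{corocrit} directly from Proposition \ref{9876crit} by specializing to $p_{1}=\cdots=p_{m}=m$, so the bulk of the work is bookkeeping rather than new analysis. First I would check the two hypotheses of Proposition \ref{9876crit}. Since $1/p_{1}+\cdots+1/p_{m}=m\cdot(1/m)=1$, condition (\ref{hip33}) holds; and since $1/p_{2}+\cdots+1/p_{m}=(m-1)/m$, the standing assumption $m\geq2$ gives $1/2\leq(m-1)/m<1$, which is exactly the remaining hypothesis $1/2\leq1/p_{2}+\cdots+1/p_{m}<1$.

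Next I would identify the index $k_{0}$ and the exponents. For $p_{1}=\cdots=p_{m}=m$ one has $1/p_{t}+\cdots+1/p_{m}=(m-t+1)/m$, so the inequality $(m-t+1)/m\geq1/2$ is equivalent to $t\leq(m+2)/2$; hence $k_{0}=\max\{t:t\leq(m+2)/2\}=\lfloor(m+2)/2\rfloor$, as claimed. For $1\leq k\leq k_{0}$ the formula in Proposition \ref{9876crit} gives $s_{k}=[1-(m-k+1)/m]^{-1}=[(k-1)/m]^{-1}=m/(k-1)$ (with the convention $s_{1}=\infty$, which matches the outer $\sup_{j_{1}}$ appearing in (\ref{w33})), while $s_{k}=2$ for $k>k_{0}$. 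This is precisely the array of exponents in the statement of the corollary, so Proposition \ref{9876crit} yields (\ref{w33}) together with item (i), namely the sharpness of $s_{1},\ldots,s_{k_{0}}$.

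Finally, for the global sharpness assertion I would invoke item (ii) of Proposition \ref{9876crit}: its hypothesis is $p_{k_{0}}\geq2$, and here $p_{k_{0}}=m\geq2$ holds trivially, so the full tuple $(s_{1},\ldots,s_{m})$ is globally sharp. The only points needing a little care are the edge case $m=2$ (where $k_{0}=m$ and the inequality $1/p_{2}\geq1/2$ holds at the boundary) and the consistent reading of $s_{1}=\infty$ as the supremum over $j_{1}$; neither constitutes a genuine obstacle, since the whole argument reduces to Proposition \ref{9876crit}. One may additionally remark, to make the improvement over Theorem \ref{criticodjair} explicit, that for every $k\geq2$ the produced exponent $m/(k-1)$ (respectively $2$, when $k>k_{0}$) is no larger than the value $\tfrac{2m(m-1)}{(m-2)k+2}$ given there, but this comparison is not needed for the proof itself.
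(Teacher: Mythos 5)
Your proposal is correct and follows essentially the same route as the paper: specialize Proposition \ref{9876crit} to $p_{1}=\cdots=p_{m}=m$, verify that the relevant sum $(m-t+1)/m\geq 1/2$ is equivalent to $t\leq(m+2)/2$ so that $k_{0}=\lfloor(m+2)/2\rfloor$, simplify $s_{k}=[1-(m-k+1)/m]^{-1}=m/(k-1)$, and invoke items (i) and (ii) (the latter since $p_{k_{0}}=m\geq2$). Your explicit check of the standing hypotheses and the note on reading $s_{1}=\infty$ as a supremum are slightly more detailed than the paper's terse computation, but the substance is identical.
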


\begin{proof}
We shall use the previous proposition with $p_{1}=\cdots=p_{m}=m$. Observe
that, if $m=2N+1$ or $m=2N$, then $\left\lfloor \frac{m+2}{2}\right\rfloor
=N+1$. Let $k_{0}$ be as in the previous theorem, i.e.,
\[
k_{0}:=\max\left\{  t:\dfrac{1}{p_{t}}+\cdots+\dfrac{1}{p_{m}}\geq\dfrac{1}%
{2}\right\}  \text{.}%
\]
Since%
\[
\frac{1}{p_{t}}+\cdots+\frac{1}{p_{m}}\geq\frac{1}{2}\Leftrightarrow
t\leq\frac{m+2}{2}\text{,}%
\]
we conclude that $k_{0}=\left\lfloor \frac{m+2}{2}\right\rfloor $. Hence, by
Proposition \ref{9876crit} we conclude that (\ref{w33}) holds as well as the
optimality of the exponents.
\end{proof}

In order to illustrate how Proposition \ref{9876crit} and its corollary
improve \cite[Theorem 1]{paulino}, let us consider $m=10$ in the table below:%

\begin{table}[H]
\centering\caption{}$
\begin{tabular}
[c]{|c|c|c|c|c|c|c|c|c|c|c|}\hline& $s_{1}$ & $s_{2}$ & $s_{3}$ & $s_{4}
$ & $s_{5}$ & $s_{6}$ & $s_{7}$ &
$s_{8}$ & $s_{9}$ & $s_{10}$\\\hline\multicolumn{1}{|c|}{\cite
[Theorem 1]{paulino}} & $\infty$ & $10$ &
$\simeq6.92$ & $\simeq5.29$ & $\simeq4.28$ & $3.6$ & $\simeq3.10$ &
$\simeq2.72$ & $\simeq2.43$ & $\simeq2.19$\\\hline\multicolumn{1}%
{|c|}{Corollary \ref{corocrit}} & \color{red}$\infty$ & \color{red}
$10$ & \color{red}$5$ &
\color{red}$\simeq3.33$ & \color{red}$2.5$ & \color{red}
$2$ & \color{blue}$2$ & \color{blue}$2$ & \color{blue}$2$ & \color
{blue}$2$\\\hline\end{tabular}
\ $\\
\begin{flushleft}
{\color{red} $\blacksquare$} The exponents are sharp\\
{\color{blue} $\blacksquare$}
The exponents (combined with the exponents in red) are globally sharp
\end{flushleft}
\end{table}%

Following the lines of the proof of Proposition \ref{9876crit}, we can obtain
the following extended version of Theorem \ref{vector}:

\begin{theorem}
Let $p_{1},\ldots,p_{m}\in\left[  1,\infty\right]  $, $E$ be Banach space, $F$
be a cotype $q$ space and $1\leq r\leq q$, with%
\[
\frac{1}{p_{1}}+\cdots+\frac{1}{p_{m}}=\frac{1}{r}%
\]
and
\[
\frac{1}{r}-\frac{1}{q}\leq\frac{1}{p_{2}}+\cdots+\frac{1}{p_{m}}<\frac{1}%
{r}\text{.}%
\]
Then, there is a constant $C_{m}$ such that
\[
\left(  \sum_{j_{1}=1}^{n}\left(  \cdots\left(  \sum_{j_{m}=1}^{n}\left\Vert
vA\left(  e_{j_{1}},\ldots,e_{j_{m}}\right)  \right\Vert _{F}^{s_{m}}\right)
^{\frac{s_{m-1}}{s_{m}}}\cdots\right)  ^{\frac{s_{1}}{s_{2}}}\right)
^{\frac{1}{s_{1}}}\leq C_{m}\left\Vert A\right\Vert \pi_{r,1}\left(  v\right)
\text{,}%
\]
for all $m$-linear operators $A\colon\ell_{p_{1}}^{n}\times\cdots\times
\ell_{p_{m}}^{n}\rightarrow E$ and all absolutely $\left(  r,1\right)
$-summing operators $v\colon E\rightarrow F$, where%
\[
s_{k}=\left\{
\begin{array}
[c]{ll}%
\left[  \dfrac{1}{r}-\left(  \dfrac{1}{p_{k}}+\cdots+\dfrac{1}{p_{m}}\right)
\right]  ^{-1}\text{,} & \text{if }1\leq k\leq k_{0}:=\max\left\{  t:\dfrac
{1}{p_{t}}+\cdots+\dfrac{1}{p_{m}}\geq\dfrac{1}{r}-\dfrac{1}{q}\right\}
\text{,}\\
\vspace{-0.3cm} & \\
q\text{,} & \text{if }k>k_{0}\text{.}%
\end{array}
\right.
\]
Moreover, $s_{1}=\infty$ cannot be improved.
\end{theorem}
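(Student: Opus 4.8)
The plan is to transcribe the proof of Proposition~\ref{9876crit}, with Theorem~\ref{vector} playing the role that Theorem~\ref{9876} plays there. For existence, fix an $m$-linear operator $A\colon\ell_{p_1}^n\times\cdots\times\ell_{p_m}^n\to E$ and an absolutely $(r,1)$-summing operator $v\colon E\to F$. For each $j_1\in\{1,\dots,n\}$ consider the $(m-1)$-linear operator
\[
B_{j_1}\colon\ell_{p_2}^n\times\cdots\times\ell_{p_m}^n\to E,\qquad B_{j_1}(z^{(2)},\dots,z^{(m)})=A(e_{j_1},z^{(2)},\dots,z^{(m)}),
\]
which satisfies $\|B_{j_1}\|\le\|A\|$ since $\|e_{j_1}\|_{\ell_{p_1}^n}=1$. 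The second hypothesis $\frac1r-\frac1q\le\frac1{p_2}+\cdots+\frac1{p_m}<\frac1r$ is exactly what is needed to apply Theorem~\ref{vector} to $B_{j_1}$ (with the same $r$, $q$, $F$ and the spaces $\ell_{p_2}^n,\dots,\ell_{p_m}^n$), producing a constant $C_{m-1}$ \emph{independent of $j_1$} with
\[
\left(\sum_{j_2=1}^n\left(\cdots\left(\sum_{j_m=1}^n\|vA(e_{j_1},e_{j_2},\dots,e_{j_m})\|_F^{s_m}\right)^{\frac{s_{m-1}}{s_m}}\cdots\right)^{\frac{s_2}{s_3}}\right)^{\frac1{s_2}}\le C_{m-1}\|A\|\,\pi_{r,1}(v).
\]
Taking $\sup_{j_1}$ and noting that $s_1=[\frac1r-(\frac1{p_1}+\cdots+\frac1{p_m})]^{-1}=\infty$ (by $\frac1{p_1}+\cdots+\frac1{p_m}=\frac1r$), so that the outermost ``$\ell_{s_1}$-sum'' is by our convention $\sup_{j_1}$, gives the asserted inequality with $C_m:=C_{m-1}$. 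Two routine checks remain: that the exponents produced by the $(m-1)$-linear application of Theorem~\ref{vector} are exactly $s_2,\dots,s_m$ --- since $\frac1{p_2}+\cdots+\frac1{p_m}\ge\frac1r-\frac1q$ we have $k_0\ge 2$, the threshold index computed from the truncated tuple $(p_2,\dots,p_m)$ coincides with the $k_0$ in the statement, and both prescriptions give $s_k=[\frac1r-(\frac1{p_k}+\cdots+\frac1{p_m})]^{-1}$ for $2\le k\le k_0$ and $s_k=q$ for $k>k_0$ --- and that $C_m$ depends only on $m$, which is clear.

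For the optimality of $s_1=\infty$ I would argue as in Proposition~\ref{9876crit}(i). Suppose, towards a contradiction, that $s_1$ could be replaced by a finite exponent $t$ while keeping $s_2,\dots,s_m$. Since $s_2,\dots,s_m\ge q$, put $\rho:=\max\{t,s_2,\dots,s_m\}<\infty$; by monotonicity of the $\ell_u$-norms the nested mixed norm dominates the joint $\ell_\rho$-norm, so
\[
\left(\sum_{j_1,\dots,j_m=1}^n\|vA(e_{j_1},\dots,e_{j_m})\|_F^{\rho}\right)^{\frac1\rho}\le C_m\|A\|\,\pi_{r,1}(v)
\]
for all $m$-linear $A$ and all $(r,1)$-summing $v$. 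This must be impossible because $\frac1{p_1}+\cdots+\frac1{p_m}=\frac1r$. When $r=1$ it is immediate: take $E=F=\mathbb{K}$, $v=\mathrm{id}$ (so $\pi_{1,1}(v)=1$), reducing the display to the classical fact that $\frac1{p_1}+\cdots+\frac1{p_m}=1$ admits no finite Hardy--Littlewood exponent (witnessed by the diagonal form $A(z^{(1)},\dots,z^{(m)})=\sum_j z^{(1)}_j\cdots z^{(m)}_j$, for which $\|A\|\le1$ by Hölder with exponents $p_1,\dots,p_m$, while the left-hand side is $n^{1/\rho}$). For general $r$ one runs the same construction with $E=\ell_r^n$ and $A(z^{(1)},\dots,z^{(m)})=\sum_j z^{(1)}_j\cdots z^{(m)}_j\,e_j$ (again $\|A\|\le1$, now by Hölder into $\ell_r^n$, precisely because $\frac1{p_1}+\cdots+\frac1{p_m}=\frac1r$), paired with a suitable $(r,1)$-summing map $v\colon\ell_r^n\to F$ of $n$-independent $(r,1)$-summing norm (for instance Grothendieck's inclusion $\ell_1^n\hookrightarrow\ell_2^n$ when $r=1$, $q=2$), so that $\|vA(e_{j_1},\dots,e_{j_m})\|_F$ stays bounded below on the diagonal while the left-hand side again grows like $n^{1/\rho}$.

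The step I expect to be the main obstacle is this last one: for each $r\in(1,q]$, exhibiting a concrete $(r,1)$-summing operator $v$ with $(r,1)$-summing norm bounded independently of $n$ whose composition with the diagonal multilinear operator defeats every finite exponent. For $r=1$ this is just Grothendieck's theorem (or the scalar reduction above); for $r>1$ one cannot reduce to the scalar case, and a genuinely vector-valued obstruction --- of Kahane--Salem--Zygmund type, combining a random multilinear operator with values in $\ell_1^n$ or $c_0^n$ and a random (Gaussian) $(r,1)$-summing operator --- seems to be required. By contrast, the existence part and the index bookkeeping are essentially a verbatim adaptation of the corresponding parts of Proposition~\ref{9876crit} and Theorem~\ref{vector}.
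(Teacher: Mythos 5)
Your existence argument is exactly the paper's: the paper gives no details for this theorem beyond a one-line pointer to the proof of Proposition \ref{9876crit}, and your adaptation (fix $e_{j_1}$, apply Theorem \ref{vector} to the $(m-1)$-linear slice $A(e_{j_1},\cdot,\ldots,\cdot)$, take $\sup_{j_1}$) is precisely that argument; your check that $k_0\ge 2$ and hence that the threshold index computed from $(p_2,\ldots,p_m)$ agrees with the $k_0$ of the statement is the right bookkeeping. Your concern about the optimality of $s_1=\infty$ is also well founded, and it is in fact a gap in the paper: the argument of Proposition \ref{9876crit}(i) reduces, after passing to the joint $\ell_\rho$-norm via monotonicity, to the scalar impossibility when $1/p_1+\cdots+1/p_m\ge 1$; but here $1/p_1+\cdots+1/p_m=1/r<1$ for $r>1$, so specializing to $E=F=\mathbb{K}$, $v=\mathrm{id}$ gives no contradiction. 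You correctly see that a genuinely vector-valued witness is needed.

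Where I would push back is on the claim that a Kahane--Salem--Zygmund/random-operator construction "seems to be required." Your own deterministic construction already does the job once one supplies the missing summing fact from Banach space theory, and no randomness is needed. Take $E=\ell_r^n$ and the diagonal $A(z^{(1)},\ldots,z^{(m)})=\sum_j z^{(1)}_j\cdots z^{(m)}_j e_j$, so that $\|A\|\le 1$ by H\"older (using $\sum_i 1/p_i=1/r$) and $A(e_{j_1},\ldots,e_{j_m})=\delta_{j_1=\cdots=j_m}e_{j_1}$. For $1\le r\le 2$ take $F=\ell_2^n$ and $v=\mathrm{id}\colon\ell_r^n\to\ell_2^n$: by the Bennett--Carl theorem this identity is $(r,2)$-summing with norm bounded independently of $n$, hence $(r,1)$-summing with bounded norm (since $\Pi_{r,2}\subset\Pi_{r,1}$), and $\ell_2^n$ has cotype $2\le q$. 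For $2<r\le q$ take $F=\ell_r^n$ (cotype $r\le q$) and $v=\mathrm{id}_{\ell_r^n}$, which is $(r,1)$-summing with $n$-independent norm because $\ell_r$ has type $2$ and cotype $r$ (Maurey). In either case $\|vA(e_{j_1},\ldots,e_{j_m})\|_F=\delta_{j_1=\cdots=j_m}$, the nested mixed norm with any finite outer exponent $t$ collapses to $n^{1/t}$, and one gets $n^{1/t}\le C_m\|A\|\pi_{r,1}(v)\le C$, a contradiction for $n$ large. So the gap you identified is real but closes with a citation to Bennett--Carl (or Maurey) rather than a new probabilistic construction; your $r=1$ case via Grothendieck is the $r=1$ instance of this.

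Two minor remarks. First, your reduction to the joint $\ell_\rho$-estimate is not even necessary: since only the diagonal of $\|vA(e_{j_1},\ldots,e_{j_m})\|_F$ is nonzero, the mixed norm with exponents $(t,s_2,\ldots,s_m)$ equals $\bigl(\sum_{j_1}\|v e_{j_1}\|_F^{\,t}\bigr)^{1/t}$ directly, which simplifies the contradiction. Second, your statement that $C_m:=C_{m-1}$ "is clear" is fine but worth flagging as the same mild abuse the paper commits in Proposition \ref{9876crit}, where the $(m-1)$-linear constant is also written $C_m$.
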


\section{The Regularity Principle is sharp}

In this final section we investigate a natural question: is the (Anisotropic)
Regularity Principle sharp? Of course, for special choices of Banach spaces we
can find better inclusions, as it also happens with Inclusion Theorems (see
\cite{diestel}). The right question here is whether is it possible to obtain
better inclusions keeping the full generality of the Regularity Principle. As
we shall see, a simple consequence of what we have already proved in this
paper is that the answer is no. We shall prove, for instance, that if $r\geq
2$, the parameters $s_{1},\ldots,s_{m-1}$ in (\ref{9080}) are optimal.

Let $r\geq2$ and suppose that for some $i\in\left\{  1,\ldots,m-1\right\}  $
there is $\varepsilon>0$ such that
\begin{equation}
\Pi_{\left(  r;p_{1},\ldots,p_{m}\right)  }^{m}\left(  E_{1},\ldots
,E_{m};F\right)  \subset\Pi_{\left(  t_{1},\ldots,t_{m};q_{1},\ldots
,q_{m}\right)  }^{m}\left(  E_{1},\ldots,E_{m};F\right)  \text{,}\label{fail}%
\end{equation}
for any Banach spaces $E_{1},\ldots,E_{m}$ and $F$, with $t_{i}=s_{i}%
-\varepsilon$. If we take $F=\mathbb{K}$ and $E_{i}=\ell_{p_{i}}^{n}$, with
$p_{m}=\frac{r}{r-1}\leq2$ and $\frac{1}{2}\leq\frac{1}{p_{1}}+\cdots+\frac
{1}{p_{m}}<1$, by mimicking the prove of Theorem \ref{9876} with the inclusion
(\ref{fail}) instead of (\ref{9080}), we conclude that there is a constant
$C_{m}$ such that, for all positive integers $n$ and all $m$-linear forms
$A\colon\ell_{p_{1}}^{n}\times\cdots\times\ell_{p_{m}}^{n}\rightarrow
\mathbb{K}$, we have%
\[
\left(  \sum_{j_{1}=1}^{n}\left(  \cdots\left(  \sum_{j_{m}=1}^{n}\left\vert
A\left(  e_{j_{1}},\ldots,e_{j_{m}}\right)  \right\vert ^{t_{m}}\right)
^{\frac{t_{m}-1}{t_{m}}}\cdots\right)  ^{\frac{t_{1}}{t_{2}}}\right)
^{\frac{1}{t_{1}}}\leq C\left\Vert A\right\Vert \text{,}%
\]
with $t_{i}=s_{i}-\varepsilon$. But this is impossible due to the sharpness of
$s_{i}$ in Theorem \ref{9876}. Similar arguments show that the estimate of
$\delta$ in (\ref{deltadez}) cannot be improved, in general. It is also
well-known that the hypothesis $\epsilon<p_{1}$ in Theorem \ref{RP} cannot be
relaxed to $\epsilon\leq p_{1}.$

\end{document}